\newtheorem{thm}{Theorem}[section]
\newtheorem{dfn}[thm]{Definition} 
\newtheorem{cor}[thm]{Corollary}
\newtheorem{prop}[thm]{Proposition} 
\newtheorem{lem}[thm]{Lemma} 
\newtheorem{conv}[thm]{Convention} 
\theoremstyle{remark}
\newtheorem*{rem*}{Remark}
\newtheorem{rem}[thm]{Remark}
\newtheorem{example}[thm]{Example}
\newcommand{\es}{\emptyset}
\newcommand{\m} {^{-1}} 
\newcommand {\Ra} {\Rightarrow}  
\newcommand {\cala} {{\mathcal {A}}}   
\newcommand {\cale} {{\mathcal {E}}}
\newcommand {\Z} {{\mathbb {Z}}}
\newcommand {\Q} {{\mathbb {Q}}}
\newcommand{\inc}{\subset}
\newcommand{\rk}{\mathop{\mathrm{rk}}}
\newcommand{\ov}{\overline }
\begin{document}

\title{Quotients and subgroups of Baumslag-Solitar groups}
\author{ Gilbert Levitt}
\date{}

\maketitle

\begin{abstract} We determine all generalized Baumslag-Solitar   groups (finitely generated groups acting on a tree with all stabilizers infinite cyclic)  
which are quotients of a given Baumslag-Solitar group $BS(m,n)$, and (when $BS(m,n)$ is not Hopfian) which of them also admit $BS(m,n)$ as a quotient. We   determine for which values of $r,s$ one may embed  $BS(r,s)$   into a given $BS(m,n)$, and we characterize finitely generated groups which embed into some $BS(n,n)$.
\end{abstract}

\setcounter{tocdepth}{2}

\section{Introduction and statement of results}

This paper
 studies various aspects of the Baumslag-Solitar groups $BS(m,n)=\langle a,t\mid ta^mt\m=t^n\rangle$.
 These groups are HNN extensions of $\Z$, and they are best understood within the class of 
   generalized Baumslag-Solitar (GBS) groups. These are fundamental groups of finite graphs of groups $\Gamma$ with all vertex and edge groups $\Z$
   (equivalently,     finitely generated groups $G$ acting on a tree   with infinite cyclic edge and vertex stabilizers);  they are represented by   $\Gamma$, viewed as a graph labelled by indices of edge groups in vertex groups (see Section \ref{prel}). 
     
It is well-known \cite{ BS, CL} that  Baumslag-Solitar groups are not always Hopfian: they may be isomorphic to proper quotients. In fact, we will see that there exist infinitely many non-isomorphic GBS groups $G$ which are \emph{epi-equivalent} to $BS(4,6)$: there exist  non-injective  epimorphisms $BS(4,6)\twoheadrightarrow  G\twoheadrightarrow  BS(4,6)$.
   
A GBS group $G$ is a quotient of some $BS(m,n)$ if and only if it is 2-generated. The first main result of this paper is the determination of the values of $m,n$ for which a given $G$ is a quotient of   $BS(m,n)$.

It follows from Grushko's theorem (see \cite{Le2}) that the only possibilities for the topology of a labelled graph $\Gamma$ representing a non-cyclic 2-generated GBS group $G$  are: a segment, a circle, or a lollipop (see Figure \ref{disp}; vertices are named $v_i,w_i$, labels are $q_j,r_j,x_j,y_j$; we view a circle as a lollipop with no arc attached). We define numbers $Q,R,X,Y$ as the products of the labels with the corresponding letter, with  
$Q=R=1$ 
if $\Gamma$ is a circle.
 
\begin{figure}[h]
\begin{center}
\begin{pspicture}(100,300)
 
\rput  (0,100)   {

\rput(-160,150){
\psline (0,0)(140,0)
\psline[linestyle=dashed](140,0)(190,0)
\psline (190,0)(270,0)

 \pscircle*( 0,0) {3}
  \pscircle*( 60,0) {3}
   \pscircle*( 120,0) {3}
    \pscircle*( 210,0) {3}
     \pscircle*( 270,0) {3}
     
 \rput(0,-15) {$ v_0$}    
   \rput(60,-15) {$ v_1$}  
     \rput(120,-15) {$ v_2$}  
     \rput(210,-15) {$ v_{k-1}$}  
      \rput(270,-15) {$ v_k$}     
  
   {\blue
   \rput(10,10) {$ q_0$}
   \rput(70,10) {$ q_1$}
    \rput(50,10) {$ r_1$}
      \rput(110,10) {$ r_2$}
        \rput(226,10) {$q_{k-1}$}
          \rput(260,10) {$ r_k$}
   }      

 \rput(400,0)  {segment}
 }
    
   \rput(-160,0){
\psline (0,0)(90,0)
\psline[linestyle=dashed](90,0)(120,0)
\psline (120,0)(180,0)

 \pscircle*( 0,0) {3}
  \pscircle*( 40,0) {3}
   \pscircle*(80,0) {3}
    \pscircle*( 130,0) {3}
     \pscircle*( 180,0) {3}
     
      \pscircle*( 196,47) {3}
        \pscircle*( 196,-47) {3}
          \pscircle*( 232,-75) {3}
           \pscircle*( 232,75) {3}
     
 \rput(0,-15) {$ v_0$}    
   \rput(40,-15) {$ v_1$}  
     \rput(80,-15) {$ v_2$}  
     \rput(130,-15) {$ v_{k-1}$}  
      \rput(195,0) {$ w_0$}     
        \rput(214,44) {$ w_{\ell-1}$}  
         \rput(209,-44) {$ w_{ 1}$}  
         \rput(239,-64) {$ w_{ 2}$}  
           \rput(239,64) {$ w_{ \ell-2}$}  
        
   {\blue
   \rput(10,10) {$ q_0$}
   \rput(48,10) {$ q_1$}
    \rput(35,10) {$ r_1$}
      \rput(74,10) {$ r_2$}
        \rput(143,10) {$q_{k-1}$}
          \rput(170,5) {$ r_k$}
      \rput(193,62) {$  y_{\ell-1}$}  
         \rput(195,-62) {$  x_{ 1}$}  
      \rput(177,40) {$  x_{\ell-1}$}
        \rput(182,-41) {$  y_{ 1}$}     
       \rput(175,17) {$  y_{\ell}$}   
          \rput(173,-13) {$ x_0$}    
      \rput(218,-80) {$  y_{ 2}$}         
    \rput(217,80) {$  x_{ \ell-2}$}                                    
   } 
   \psarc( 260,0) {80}  {95}{265} 
     \psarc[linestyle=dashed]( 260,0) {80}     {265} {95}
   
    \rput(400,0)  {lollipop}
}
}

   \end{pspicture}
\end{center}
\caption{representing 2-generated GBS groups}
\label{disp}
\end{figure}
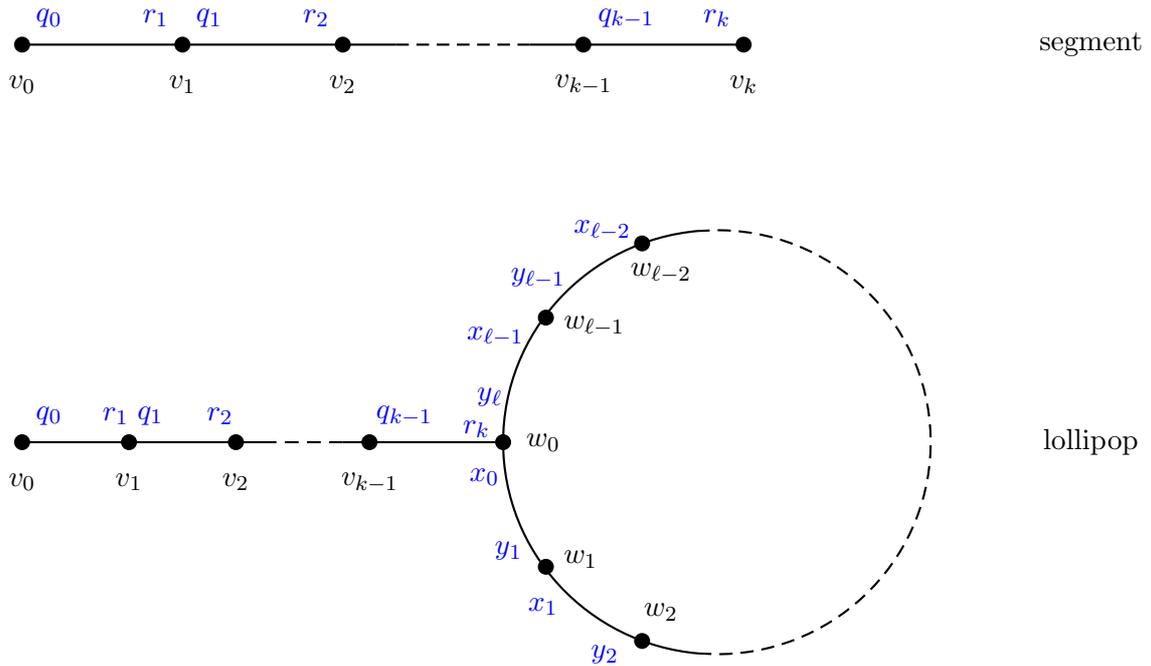

\begin{thm}[Theorem \ref{deBS}] \label{dbs}
Let $\Gamma$ be a labelled graph as on Figure \ref{disp} representing a 2-generated GBS group $G$ other than $\Z$ or the Klein bottle group $K$. 
\begin{itemize}
\item Segment case: $G$ is a quotient of $BS(m,n)$ if and only if $m=n$ and $m$ is divisible by $Q$ or $R$;
\item Circle/lollipop case:  $G$ is a quotient of $BS(m,n)$ if and only if $(m,n)$ is an integral multiple of $(QX,QY)$ or $(QY,QX)$.

\end{itemize}
\end{thm}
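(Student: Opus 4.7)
The plan is to prove sufficiency and necessity separately, treating the segment case and the circle/lollipop case in parallel. For sufficiency one exhibits explicit epimorphisms $BS(m,n)\twoheadrightarrow G$; for necessity one analyses the action of $G$ on its Bass--Serre tree together with centralizer information and, in the loop case, the modular homomorphism $\Delta_G:G\to\Q^*$.

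\emph{Sufficiency.} In the segment case, the iterated amalgamations yield $a_0^Q=a_k^R$ in $G$, and this element lies in the centre $Z(G)=\langle a_0^Q\rangle$. Hence when $m=n$ and $Q\mid m$, the assignment $a\mapsto a_0$, $t\mapsto a_k$ trivially respects the BS relation, since $a_0^m$ is a central power; the case $R\mid m$ is symmetric. In the circle/lollipop case, the loop's stable letter $\tau$ satisfies $\tau w_0^X\tau^{-1}=w_0^Y$ (by reading labels around the loop); combined with $a_0^Q=w_0^R$, this gives $\tau a_0^{QX}\tau^{-1}=a_0^{QY}$, so for $(m,n)=c(QX,QY)$ the assignment $a\mapsto a_0$, $t\mapsto\tau$ is a well-defined homomorphism, and $\tau^{-1}$ handles the swapped pair. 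In both constructions the non-trivial part is to verify $\langle a_0,a_k\rangle=G$ (resp.\ $\langle a_0,\tau\rangle=G$); this uses the 2-generation hypothesis and an induction along $\Gamma$ to express the intermediate vertex generators as roots of words in the two chosen elements.

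\emph{Necessity.} Given $\pi:BS(m,n)\twoheadrightarrow G$, the BS relation forces $\pi(t)\in C_G(\pi(a)^m)$, and since $\langle\pi(a),\pi(t)\rangle=G$ is non-cyclic, one deduces $\pi(a)^m\in Z(G)$. In the segment case $Z(G)=\langle a_0^Q\rangle$, so $\pi(t)\pi(a)^m\pi(t)^{-1}=\pi(a)^m$; combined with the BS relation this yields $\pi(a)^{m-n}=1$, and torsion-freeness of $G$ together with non-cyclicity forces $m=n$. Writing $\pi(a)$ (up to conjugation) as a power of some $a_i$ and matching $\pi(a)^m\in\langle a_0^Q\rangle$ against the identification $a_0^Q=a_i^{s_i}$ (with $s_i$ an appropriate mixed product of $q$'s and $r$'s) then produces $Q\mid m$ or $R\mid m$, depending on which vertex contains $\pi(a)$. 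In the circle/lollipop case $Z(G)$ is much smaller (often trivial), so one instead applies $\Delta_G$ to the BS relation to constrain $n/m$ to lie in the image of $\Delta_G$, which equals $\langle X/Y\rangle$; refining with Bass--Serre analysis along the stem and the location of $\pi(a)$ in the tree then forces the integrality $m/(QX)=n/(QY)=c\in\Z$ (or the swapped version).

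\emph{Main obstacle.} The hardest step is the necessity in the circle/lollipop case, where one must upgrade the ratio $n/m=(X/Y)^{\pm 1}$ supplied by $\Delta_G$ to the full integrality condition $(m,n)=c(QX,QY)$. This amounts to tracking how $\pi(a)^m$ sits inside successive edge stabilizers along the stem, translating the graph-theoretic divisibilities imposed by $\Gamma$ into constraints on $m$ and $n$; the exclusion of the Klein bottle is essential, since an involution-type relation $tat^{-1}=a^{-1}$ would otherwise interfere with the translation-length argument.
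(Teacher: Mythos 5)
Your overall architecture (explicit epimorphisms on the standard generators for sufficiency; ellipticity, centralizers and the modular homomorphism for necessity) matches the paper's, and your sufficiency argument is essentially correct once one grants that $G=\langle a_0,a_k\rangle$, resp.\ $G=\langle a_0,\tau\rangle$ (this is not a routine induction along $\Gamma$ but the plateau/rank theorem of \cite{Le2}, which the paper invokes). The necessity direction, however, has genuine gaps. In the segment case your opening step is circular: the relation $ta^mt\m=a^n$ gives $\pi(t)\pi(a)^m\pi(t)\m=\pi(a)^n$, and it ``forces $\pi(t)\in C_G(\pi(a)^m)$'' exactly when $m=n$, which is what you are trying to prove. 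The correct order is the reverse: first show $\pi$ is elliptic-friendly (Corollary \ref{ef}), so $\pi(a)$ is a non-trivial elliptic element, then read off $\Delta_G(\pi(t))=m/n$ and use that $\Delta_G$ is trivial because $\Gamma$ is a tree to get $m=n$; only then is $\pi(a)^m$ central.

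More seriously, in both cases the passage from centrality of $\pi(a)^m$ (or from $m/n=(X/Y)^{\pm1}$) to the divisibility $Q\mid m$, resp.\ $QX\mid m$, needs two inputs you never supply: (a) $\pi(a)$ is conjugate to $a_v^{r_0}$ with $v$ a \emph{terminal} vertex (segment), resp.\ $v=v_0$ or a vertex meeting every plateau (lollipop) --- not ``some $a_i$''; and (b) the exponent $r_0$ is coprime to $Q$, resp.\ to $QX\wedge QY$. Without (b), from $a_0^{r_0m}\in\langle a_0^Q\rangle$ you only get $Q\mid r_0m$, which is strictly weaker. Both (a) and (b) come from the plateau analysis that 2-generation imposes (Lemmas \ref{lesp} and \ref{copr}, Remark \ref{rr}), and they feed into the computation of $\langle a_0^{r_0}\rangle\cap\langle a_k\rangle$ in Lemma \ref{wt2}. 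Finally, the lollipop integrality step, which you correctly identify as the main obstacle, is left unargued: ``tracking $\pi(a)^m$ along edge stabilizers'' only starts once one produces a segment of the Bass-Serre tree, from a fixed point $u$ of $g_0$ to a fixed point $u'$ of $f(a)^m$ with $\varphi(u')=f(t\m)\varphi(u)$ chosen at minimal distance, and proves that this segment immerses into $\Gamma$ as a loop going exactly once around the circle; one then needs a prime-by-prime refinement of the index lemma (Assertion 3 of Lemma \ref{wt2}) to handle the primes dividing both $X$ and $Y$, where $r_0$ need not be coprime to $X$. None of this is present, so the hardest third of the theorem remains unproved.
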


Using this, one may describe all GBS quotients of a given $BS(m,n)$. In particular (see Proposition \ref{infq}), one may determine which $BS(m,n)$ have infinitely many GBS quotients  up to isomorphism.

We also consider Baumslag-Solitar quotients of a given 2-generated GBS  group $G$. This is   interesting only for groups represented by a circle or a lollipop (a group represented by a segment has no Baumslag-Solitar quotient, except sometimes the Klein bottle group $K$). By the previous theorem   there is a ``smallest'' Baumslag-Solitar group mapping onto $G$, namely $BS(QX,QY)$, and we determine whether $G$ maps onto $BS(QX,QY)$. The general statement  (Theorem \ref{hop}) is a little technical, so we limit ourselves  here to the groups pictured on  Figure \ref{ptit}.

\begin{figure}[h]
\begin{center}
\begin{pspicture}(100,180)
 
\rput  (-50,80)   {
 
\psline (0,0)(120,0)
 \pscircle ( 180,0) {60} 

 \pscircle*( 0,0) {3}
 
   \pscircle*( 120,0) {3}
   
    {\blue
   \rput(15,10) {$ Q$}
 
    \rput(100,10) {$ R$}
      \rput(117,30) {$Y$}
        \rput(117,-30) {$X$}      
   }      
            Ê}
        \end{pspicture}
\end{center}
\caption{the group $ \langle a,b,t\mid a^Q=b^R, tb^Xt\m =b^Y\rangle$}
\label{ptit}
\end{figure}
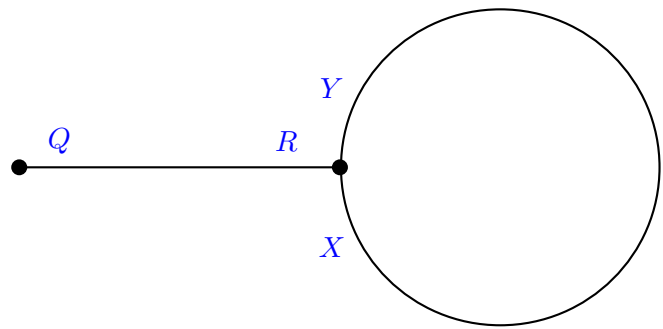

\begin{thm} Suppose that 
  $G=\langle a,b,t\mid a^Q=b^R, tb^Xt\m =b^Y\rangle$ is 2-generated. Then $G$ maps onto $BS(QX,QY)$ if and only if one (or more) of the gcd's $X\wedge QY$, $Y\wedge QX$, $Q\wedge R$   equals 1.
\end{thm}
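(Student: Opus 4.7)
The plan is to prove the two implications of the equivalence separately; write $BS(QX,QY)=\langle c,s\mid sc^{QX}s\m=c^{QY}\rangle$.

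For the ``if'' direction, I would construct an explicit epimorphism $\phi:G\twoheadrightarrow BS(QX,QY)$ in each of the three cases. When $Q\wedge R=1$, I would set $\phi(a)=c^R$, $\phi(b)=c^Q$, $\phi(t)=s$: both defining relations of $G$ are immediate (the first becomes $c^{QR}=c^{QR}$, the second is just the HNN relation of $BS(QX,QY)$), and the image contains $c^R,c^Q$, hence $c^{\gcd(Q,R)}=c$, as well as $s$, so $\phi$ is surjective. The substitution $t\leftrightarrow t\m$, $X\leftrightarrow Y$, $s\leftrightarrow s\m$ is an involution reducing the case $Y\wedge QX=1$ to $X\wedge QY=1$. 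In this remaining case, the ``diagonal'' choice $\phi(a)=c^R$, $\phi(b)=c^Q$, $\phi(t)=s$ only yields image $\langle c^{Q\wedge R},s\rangle$, which is proper when $Q\wedge R>1$; the refinement, which the B\'ezout identity $\lambda X+\mu QY=1$ makes possible, is to take $\phi(a)$ to lie not in $\langle c\rangle$ but in the stabilizer of an adjacent vertex of the Bass--Serre tree of $BS(QX,QY)$, as a $Q$-th root of $\phi(b)^R$ of the form $s^{\pm 1}c^\alpha s^{\mp 1}$ with exponent $\alpha$ tuned so that the combined image contains $c$.

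For the ``only if'' direction, suppose $\phi:G\twoheadrightarrow BS(QX,QY)$ is an epimorphism; I argue by contradiction that if none of the three gcd's is $1$, then no such $\phi$ can be surjective. Torsion-freeness of $BS(QX,QY)$ and the ellipticity of $\phi(a)^Q=\phi(b)^R$ in the Bass--Serre tree $T$ force both $\phi(a),\phi(b)$ to be elliptic; after conjugating $\phi$, $\phi(b)=c^\beta\in\langle c\rangle$. The modular homomorphism $\Delta\colon BS(QX,QY)\to\Q^*$ (with $\Delta(s)=Y/X$) combined with the HNN relation force $\phi(t)\in s\cdot\ker\Delta$ and $Q\mid\beta$. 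A case analysis on the position of $\phi(a)$ in $T$ -- constrained by which edge stabilizers contain $\phi(b)^R$ -- then shows that $\phi(G)$ is always contained in a subgroup of the form $\langle c^e,s\rangle$, with $e$ divisible by at least one of $Q\wedge R$, $X\wedge QY$, $Y\wedge QX$; surjectivity forces one of these three gcd's to be $1$.

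The main obstacle is the refined construction in the case $X\wedge QY=1$: identifying a non-standard $Q$-th root of $\phi(b)^R$ which, together with $\phi(b)$ and $\phi(t)$, generates the whole group rather than a proper subgroup $\langle c^e,s\rangle$. The parallel difficulty in the converse direction is the case analysis on the position of $\phi(a)$ in the Bass--Serre tree, which must rule out that an unexpected choice of vertex stabilizer rescues surjectivity; in both directions the essential subtlety is that $\phi(a)$ may lie in vertex stabilizers at various distances from the one containing $\phi(b)$, so one must track exactly which exponents $\alpha$ can play its role.
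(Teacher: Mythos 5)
Your ``if'' direction has a genuine gap precisely in the case that carries all the difficulty: $X\wedge QY=1$ (or symmetrically $Y\wedge QX=1$) with $Q\wedge R>1$. You propose to take $\phi(b)\in\langle c\rangle$, $\phi(t)=s$, and $\phi(a)$ a $Q$-th root of $\phi(b)^R$ lying in the stabilizer of a vertex \emph{adjacent} to the one fixed by $\phi(b)$, i.e.\ of the form $s^{\pm1}c^{\alpha}s^{\mp1}$. Distance one is not enough. Take $Q=3$, $R=9$, $X=2$, $Y=3$, so $G=\langle a,b,t\mid a^3=b^9,\ tb^2t\m=b^3\rangle$ is $2$-generated, $X\wedge QY=2\wedge 9=1$, $Q\wedge R=3$, and the theorem asserts $G\twoheadrightarrow BS(6,9)=\langle c,s\mid sc^6s\m=c^9\rangle$. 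With $\phi(t)=s$ the loop relation forces $\phi(b)=c^{3\beta'}$, so $\phi(a)^3=c^{27\beta'}$; one checks directly that the cube roots of $c^{27\beta'}$ lying in $\langle c\rangle$, in $s\langle c\rangle s\m$, or in $s\m\langle c\rangle s$ are $c^{9\beta'}$, $sc^{6\beta'}s\m$, and (only when $\beta'=2\beta''$) $s\m c^{27\beta''}s$, and in each case $\langle \phi(a),\phi(b),s\rangle$ has index at least $3$ in $BS(6,9)$. A surjection does exist, but only at distance two: $\phi(b)=c^3$, $\phi(a)=s^2c^4s^{-2}$, $\phi(t)=s$. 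In general the conjugating power of $s$ that is needed is the exponent $\alpha$ in a factorization $R\tilde R=X^{\alpha}Y^{\beta}$ (available because every prime of $R$ divides $XY$ but not both $X$ and $Y$, Lemma \ref{copr}), and $\alpha$ is unbounded as $R$ varies; this is exactly the paper's construction, which sends one generator to $t^{\alpha}a^{\tilde RQ}t^{-\alpha}$ and the other to $a^{Y^{\alpha+\beta}}$, the coprimality hypothesis being used only at the very end to see that the image, containing $a^{Y^{\alpha+\beta}}$ and $a^{QX^{\alpha+\beta}}$ together with $t$, is everything. Your B\'ezout identity plays no role at distance one. (Your case $Q\wedge R=1$ is correct and coincides with the paper's contraction-move argument.)

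The ``only if'' direction is sketched in a way that is compatible with the paper's proof but omits its essential input. The paper first shows, via the rank and plateau results of \cite{Le2} (Lemma \ref{lesp} and Remark \ref{rr} applied to the generating pair $(f(a),f(t))$ of the target), that $f(a)$ is conjugate to $c^{r_0}$ with $r_0$ coprime to $Q$ and to $X\wedge Y$; it is this coprimality that rules out your ``unexpected choices of vertex stabilizer'' when $X\wedge QY\ne1\ne Y\wedge QX$, forces $f(a)$ and $f(b)$ into a common cyclic vertex group, and hence makes $f$ factor through the contraction of the segment edge, which is $BS(QX/\delta,QY/\delta)$ with $\delta=Q\wedge R$ and cannot surject onto $BS(QX,QY)$ when $\delta>1$. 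Without a substitute for that coprimality statement, your case analysis on the position of $\phi(a)$ in the tree cannot be closed, and your claim that the image lies in some $\langle c^e,s\rangle$ with $e$ divisible by one of the three gcd's is not established (nor is it the right dichotomy when only some of the gcd's exceed $1$).
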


We say that two groups are \emph{epi-equivalent} if each is isomorphic to a quotient of the other (of course, any group epi-equivalent to a Hopfian group is isomorphic to it). Theorem \ref{hop} may be viewed as the determination of the GBS groups which are epi-equivalent to a Baumslag-Solitar group (see Corollary \ref{epi}).    

 Recall \cite{CL} that $BS(m,n)$ is Hopfian if and only if $m$ or $n$ equals $\pm1$, or $m$, $n$ have the same set of prime divisors (Remark \ref{nh} contains a  simple geometric argument showing non-Hopficity, illustrated on Figure \ref{hopf}).
If $m,n$ are integers which are not prime and not coprime, and $BS(m,n)$ is not Hopfian, then (Corollary \ref{iee}) there are infinitely many GBS groups which are epi-equivalent to $BS(m,n)$. 

Theorem \ref{hop}, together with a result from \cite{Le} or \cite{Me}, also yields the following characterization of GBS groups $G$ which are large (some finite index subgroup maps onto a non-abelian free group):  \emph{$G$ is large unless it is a quotient of some $BS(m,n)$ with $m,n$ coprime} (see Corollary \ref{pointi}).

In Section \ref{bss} we use immersions of graphs of groups \cite{Ba} (see also \cite{Dud}) to find Baumslag-Solitar  subgroups in GBS groups. In particular, we show that, if $\frac mn $ is a non-zero rational number written in lowest terms with $m\ne\pm n$, and $G$ contains elements $a$ and $t$ satisfying 
$ta^{qm}
t\m=a^{qn}$ for some $q\ge1$, then $G$ contains $BS(m,n)$. In other words, one may define the set of moduli of $G$ (see Section \ref{prel}) in terms of its Baumslag-Solitar subgroups. 

We deduce that
\emph  {a   GBS group is   residually finite if and only if it is solvable or virtually $F \times\Z$ with $F$ free} (see \cite{Mes} for   Baumslag-Solitar groups). This is   well-known (see \cite {Wi} page 128), but we have not found a reference in the literature, so we provide a proof. Our argument shows that a GBS group is residually finite if and only if all its Baumslag-Solitar subgroups are.

We also determine under which conditions one may embed a Baumslag-Solitar group into another. 

\begin{thm} [Theorem \ref{obstr}]\label{obstr0}
Assume that  
$(r,s)\ne(\pm1,\pm1)$. 
Then  $BS(r,s)$ embeds into $BS(m,n)$ if and only if the following hold:  
\begin{enumerate}
\item $\frac rs$ is a power of $\frac mn$;
\item if $m$ and $n$ are divisible by $p^\alpha$ but not by $p^{\alpha+1}$, with $p$ prime and $\alpha\ge0$, then $r$ and $s$ are not divisible by $p^{\alpha+1}$; in particular, any prime dividing $rs$ divides $mn$;
\item if $m$ or $n$ equals $\pm1$, so does $r$ or $s$.
\end{enumerate}
\end{thm}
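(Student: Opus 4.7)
The plan is to prove necessity of the three conditions separately, then construct an explicit embedding when all three hold. Condition (3) is immediate: if $m$ or $n$ equals $\pm1$ then $BS(m,n)$ is metabelian, so every subgroup is solvable, and $BS(r,s)$ is solvable exactly when $r$ or $s$ equals $\pm1$. Condition (1) comes from the modular homomorphism $\Delta:G\to\mathbb{Q}^*$ of a GBS group, which is canonically defined on $G$. The image of $BS(m,n)$ is $\langle n/m\rangle$ and that of $BS(r,s)$ is $\langle s/r\rangle$, and the restriction of $\Delta_{BS(m,n)}$ to the embedded copy of $BS(r,s)$ equals its own modular function; hence $\langle s/r\rangle\subseteq\langle n/m\rangle$, giving~(1).

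Condition (2) is the delicate one, and I would work in the Bass--Serre tree $T$ of $BS(m,n)$. Writing $a,t$ for the generators of $BS(r,s)$ viewed in $BS(m,n)$, the element $a$ has modulus $1$ and so is elliptic in $T$, fixing a vertex $v$ with stabilizer $\langle\alpha\rangle$; write $a=\alpha^d$. The relation $t\alpha^{dr}t^{-1}=\alpha^{ds}$ forces $\alpha^{ds}$ to fix both $v$ and $tv$, hence the whole segment $[v,tv]$, whose length is $|k|$. Requiring $\alpha^{ds}$ to belong to the stabilizer of every edge of this segment translates, prime by prime, into the divisibility bounds on $r,s$ stated in~(2). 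The case $v_p(m)=v_p(n)$ is the binding one, because only then do the incoming and outgoing edges at an interior vertex of the segment impose the same $p$-adic cap on the transported element.

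For sufficiency, after swapping $r$ with $s$ if needed, take $k\ge0$ in~(1). Following the approach of Section~\ref{bss}, I would define a morphism of one-loop labeled graphs of groups $\Gamma'\to\Gamma$ (where $\Gamma$ and $\Gamma'$ represent $BS(m,n)$ and $BS(r,s)$) that sends the loop of $\Gamma'$ to the $k$-fold iterate of the loop of $\Gamma$, with vertex-group map $x\mapsto x^c$ for a suitable integer $c$. At the group level this is $a\mapsto a^c$, $t\mapsto t^k$. Condition~(1) ensures the relation $t^k(a^c)^r t^{-k}=(a^c)^s$ holds in $BS(m,n)$, and conditions~(1) and~(2) together allow one to choose $c$ so that the local injectivity condition making $\Gamma'\to\Gamma$ an immersion in the sense of Bass~\cite{Ba} is met at each of the $k$ intermediate traversals of the loop; this is precisely where the caps $v_p(r),v_p(s)\le v_p(m)$ for primes $p$ with $v_p(m)=v_p(n)$ are used. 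The immersion then induces an injection on fundamental groups, giving the desired embedding. The cases where $m$ or $n$ equals $\pm1$ are handled separately using the explicit solvable structure.

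I expect the hardest step to be necessity of~(2): carrying out the tree transport computation requires careful bookkeeping of edge-stabilizer constraints along the axis of $t$, with a prime-by-prime analysis distinguishing the case $v_p(m)=v_p(n)$ from the generic case. The sufficiency argument is conceptually clean via immersions, but also relies on~(2) in a non-trivial way, to ensure that a single integer $c$ can be chosen making the morphism an immersion at every intermediate vertex simultaneously.
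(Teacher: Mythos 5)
Your treatment of conditions (1) and (3) is fine and matches the paper's. But there are two genuine gaps, one in each direction.

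\textbf{Necessity of (2).} Your segment argument produces an inequality pointing the wrong way. That $\alpha^{ds}$ fixes the whole segment $[v,tv]$ says that $ds$ is a \emph{multiple} of the index $N$ of the pointwise stabilizer of the segment in $\langle\alpha\rangle$; this is a lower bound on $v_p(ds)$, and because of the unknown factor $d$ it yields no upper bound on $v_p(s)$ at all. The missing idea is a comparison of the intersection $\langle a\rangle\cap t\langle a\rangle t^{-1}$ computed in two ways: inside $BS(r,s)$ it equals exactly $\langle a^{s}\rangle$ (Britton's lemma), while inside the ambient tree the transport computation of Lemma \ref{wt} (Assertion 4, where every label along the segment is $\pm m$ or $\pm n$ and hence has $p$-valuation exactly $\alpha$) shows this intersection contains $a^{N}$ for some $N$ with $v_p(N)\le\alpha$. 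Since $s$ must divide every such $N$, one gets $v_p(s)\le\alpha$. (Also, ``modulus $1$ implies elliptic'' is false --- think of the stable letter of $BS(n,n)$; ellipticity of $a$ comes from Lemma \ref{koc}.)

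\textbf{Sufficiency.} The map $a\mapsto a^{c}$, $t\mapsto t^{k}$ cannot produce the required embeddings. Its image is $\langle a^{c},t^{k}\rangle$, and the minimal relation satisfied by this pair, again computed via Lemma \ref{wt} along the length-$k$ segment $[v,t^{k}v]$, forces $(R,S)$ with $v_p(R)\le k\,v_p(m)$ and $v_p(S)\le k\,v_p(n)$ (up to the contribution of $c$, which is constrained by the requirement that $S/R=(n/m)^k$ be realized integrally); so this subgroup can never be $BS(r,s)$ when $r,s$ carry more prime factors than the modulus accounts for. Concretely, $BS(4,6)$ embeds into $BS(2,3)$ by the theorem, but $\langle a^{c},t\rangle$ in $BS(2,3)$ is always (a quotient of) $BS(2,3)$ itself, never $BS(4,6)$. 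The paper's construction instead sends the stable letter of $BS(r,s)$ to a long word: the graph $\ov\Gamma$ representing $BS(r,s)$ is a circle of $4x+4y+5\beta$ edges wrapping back and forth around the loop of $\Gamma_{m,n}$ with net winding number $\beta$ (Figure \ref{gros}), precisely so as to accumulate the extra factors $m^{x}n^{y}$ allowed by condition (2); weak admissibility is checked with explicit geometric-progression multiplicities. On top of that one needs the reduction scheme of Lemma \ref{aug} to pass from the coprime case to general $m,n$ (common factors, the case $m\mid n$, and primes appearing with equal positive exponent). None of this is recoverable from a $k$-fold cover of the loop, so the sufficiency half of your plan would have to be rebuilt from scratch.
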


The second condition says for instance that $BS(12,20)$ does not embed into $BS(6,10)$.
It follows from the theorem that $BS(m,n)$ contains infinitely many non-isomorphic Baumslag-Solitar groups if and only if $m\ne\pm n$. For instance, $BS(2,3)$ contains $BS(2^{a+b}3^c, 2^b3^{a+c})$ for all $a,b,c\ge0$, while $BS(3,3)$ only contains $\Z^2=BS(1,1)$.

At the end of the paper we characterize subgroups of $BS(n,n)$ and $BS(n,-n)$. In particular:

\begin{thm}[Corollary \ref{mmm}] \label{mmmm}
A finitely generated  group $G$ embeds into some $BS(n,n)$ if and only if $G$ is  torsion-free, and either 
$G$ is free or $G $ is a central   extension of $\Z$ by a free product of cyclic groups. 
\end{thm}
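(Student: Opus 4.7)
The plan rests on the observation that, for $n\ge 2$, $a^n$ generates a central infinite cyclic subgroup $Z$ of $BS(n,n)$, and $BS(n,n)/Z\cong \Z/n\ast\Z$ is a free product of cyclic groups (the case $n=1$ being trivial, since $BS(1,1)=\Z^2$). For the only-if direction, let $G\le BS(n,n)$ be finitely generated and torsion-free, and set $Z_G=G\cap Z$. The image of $G$ in $\Z/n\ast\Z$ is $G/Z_G$, which by Kurosh's subgroup theorem is itself a free product of cyclic groups $\ast_i C_{n_i}$. If $Z_G=1$, then $G\cong\ast_i C_{n_i}$ is a torsion-free free product of cyclic groups, hence free. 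If $Z_G\ne1$, then $Z_G\cong\Z$ is central in $G$, exhibiting $G$ as a central extension $1\to\Z\to G\to\ast_i C_{n_i}\to 1$, as required.

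For the if direction, I first dispense with the free case. Any finitely generated free group embeds in $F_2$, and $F_2\hookrightarrow BS(2,2)$: the kernel of $\Z/2\ast\Z\to\Z/2$ killing the $\Z$-factor is free of rank $2$, and its preimage in $BS(2,2)$ is a central $\Z$-extension of $F_2$, hence---since $H^2(F_2;\Z)=0$---isomorphic to $F_2\times\Z$.

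In the central extension case, write $1\to\Z\to G\to H\to 1$ with $H=\ast_i C_{n_i}$ (finitely many factors since $H$ is finitely generated, some $n_i$ possibly infinite). Let $n$ be the l.c.m.\ of the finite orders (or $n=2$ if all orders are infinite), and aim to realize $G$ as the preimage $\tilde H$ of a suitably embedded copy of $H$ in $\Z/n\ast\Z$. First, embed $H$ into $\Z/n\ast\Z$: for each finite factor $C_{n_i}$ pick an element of order $n_i$ in a conjugate of $\Z/n$ (using $n_i\mid n$), and for each infinite factor pick a hyperbolic element; when placed at pairwise distinct vertices/axes of the Bass-Serre tree, a standard ping-pong argument shows they generate a free product isomorphic to $H$. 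Second, the preimage $\tilde H\le BS(n,n)$ is automatically a torsion-free central $\Z$-extension of $H$, and all such extensions are isomorphic: they are classified by units in $H^2(H;\Z)=\bigoplus_{n_i<\infty}\Z/n_i$, and the automorphism of $H$ replacing a generator of $C_{n_i}$ by a power coprime to $n_i$ acts on the $i$-th coordinate by multiplication by a unit, so $\mathrm{Aut}(H)$ acts transitively on the torsion-free classes. Hence $G\cong\tilde H\hookrightarrow BS(n,n)$.

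The main technical step is the ping-pong embedding of $H$ into $\Z/n\ast\Z$, which requires care in placing torsion elements and hyperbolic elements so as to generate a free product with no unintended relations; the remaining pieces---Kurosh's theorem, the triviality of central extensions of free groups, and the classification of central $\Z$-extensions together with the $\mathrm{Aut}(H)$-action---are standard.
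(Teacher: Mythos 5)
Your proof is correct, but it follows a genuinely different route from the paper's. The paper deduces this statement from a finer result (Theorem \ref{subnn}): for a fixed $n\ge 2$, a non-cyclic GBS group embeds into $BS(n,n)$ iff $G/Z(G)$ is a free product of cyclic groups whose finite factors have order dividing $n$, iff $G$ has a reduced labelled graph with all labels at each vertex equal and dividing $n$. The hard direction there is proved by an explicit construction: $G$ is first realized as a GBS group by lifting a tree action of $G/Z(G)$, its graph is subdivided so that non-terminal vertices are trivalent with labels $1$, and a weakly admissible map (an immersion of graphs of groups in the sense of Bass) onto the standard graph of $BS(n,n)$ is exhibited. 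You instead work entirely with the central quotient $BS(n,n)/\langle a^n\rangle\cong \Z/n*\Z$: Kurosh gives the forward direction (essentially as in the paper, though you quotient by $G\cap Z$ rather than identifying $Z(G)$), and for the converse you embed $H=\ast_i C_{n_i}$ into $\Z/n*\Z$ and replace the paper's explicit immersion by an abstract uniqueness argument --- the preimage of $H$ is \emph{some} torsion-free central $\Z$-extension of $H$, and all such extensions are isomorphic because they correspond to tuples of units in $H^2(H;\Z)\cong\bigoplus_{n_i<\infty}\Z/n_i$, on which $\mathrm{Aut}(H)$ acts transitively. Both of these cohomological facts check out (torsion-freeness of $E$ is equivalent to torsion-freeness of each preimage $E_i\cong\Z^2/\langle(n_i,-c_i)\rangle$, i.e.\ to $c_i$ being a unit, and $x_i\mapsto x_i^{u}$ multiplies the $i$-th coordinate by $u$). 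What the paper's approach buys is the sharper quantitative statement (exactly which $n$ work, the labelled-graph criterion, and the $BS(n,-n)$ variant) within the immersion framework used throughout Section \ref{bss}; what yours buys is a shorter, self-contained proof of the corollary that avoids that machinery at the cost of invoking the classification of central extensions. The one step you leave sketchy is the embedding of $C_{n_1}*\cdots*C_{n_k}*F_r$ (with $n_i\mid n$) into $\Z/n*\Z$; your ping-pong outline is fine but could be replaced by the cleaner observation that the kernel of the retraction $\Z/n*\Z\to\Z$ is the infinite free product $\ast_{j\in\Z}\,y^j\langle x\rangle y^{-j}$, which visibly contains any such $H$.
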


The paper is organized as follows.  

Section \ref{prel}  consists of preliminaries about GBS groups. We also use results from \cite{Le2} in order to describe 2-generated GBS groups.
Section \ref{gene} consists of generalities about maps between GBS groups. In particular, 
we show that (roughly speaking) a GBS group $G$ maps onto a Baumslag-Solitar group if and only if it is represented by labelled graphs with first Betti number at least 1  
(see Lemma \ref{qbs} for a correct statement), and we illustrate how strongly the descending chain condition fails for GBS groups.

  In Section \ref{sens1} we prove Theorem \ref{dbs}: given a 2-generated GBS group $G$, we find the values of $m$ and $n$ for which there is an epimorphism $BS(m,n)\twoheadrightarrow G$. Corollaries are derived in Section \ref{cons}; in particular, we  determine how many non-isomorphic GBS quotients a given  $BS(m,n)$ has.
  
  In Section \ref{sens2} we state and prove Theorem \ref{hop}, determining whether a given GBS group $G$ is epi-equivalent to a Baumslag-Solitar group (see Corollary \ref{epi}). We also find which $BS(m,n)$ are epi-equivalent to infinitely many non-isomorphic GBS groups. 
  
  Section \ref{bss} is devoted to the construction of Baumslag-Solitar subgroups. In particular, we prove Theorem \ref{obstr0}. In the last section we study subgroups of $BS(n,n)$.
  
 This work was partially supported by 
ANR-07-BLAN-0141-01 and ANR-2010-BLAN-116-03.  

 \tableofcontents

\section{Preliminaries} \label{prel}

\subsection{Generalities on GBS groups}

We refer to \cite{FoJSJ,FoGBS,Le} for basic facts about GBS groups.

GBS groups are represented by labelled graphs. 
A \emph{labelled graph} is a finite   graph $\Gamma$ where each oriented edge $e$ has a label $\lambda_e$, a nonzero integer (possibly negative). When drawing pictures, we place $\lambda_e$ along $e$ near its origin (in blue).

We denote by $V$ the set of vertices of $\Gamma$, and by $\cale$ the set of non-oriented edges. We view a non-oriented edge as $\varepsilon=(e,\tilde e)$, where $\tilde e$ is $e$ with the opposite orientation. We denote by $v=o(e)$ the origin of $e$, and by $E_v$ the set of oriented edges with origin $v$. The cardinality $ | E_v | $ of $E_v$ is the \emph{valence} of $v$. A vertex is \emph{terminal} if it has valence one. 

We say that $\lambda_e$ is the \emph{label of $e$ near the vertex $o(e)$}, and that $\lambda_e$ is a label \emph{carried by $e$ or $\varepsilon$}. There are $ | E_v | $ labels near a vertex $v$. 

We write $m\wedge n$ for the greatest common divisor (gcd) of two integers $m,n$ (i.e.\ the positive generator of $\langle m\Z,n\Z\rangle$); by   convention, $m\wedge n>0$ regardless of the sign of $m,n$. On the other hand, $lcm(m,n):=\frac{mn}{m\wedge n}$ may be negative.
 
A connected labelled graph defines a graph of groups. All edge and vertex groups are $\Z$, and the inclusion from the edge group $G_e$ to the vertex group $G_{o(e)}$ is multiplication by $\lambda_e$. The fundamental group $G$ of the graph of groups  is the \emph{GBS group represented by $\Gamma$} (we do not always assume that $\Gamma$ is connected, but we implicitly do   whenever we refer to the group it represents). The group $G$  may be presented as follows (see an example on Figure \ref{ptit}). 

Choose a maximal subtree $\Gamma_0\inc \Gamma$. There is one generator $a_v$ for each vertex $v\in V$, and one generator $t_\varepsilon$ (stable letter) for each $\varepsilon$ in $\cale_0$, the set of 
non-oriented edges  
not in $\Gamma_0$. Each non-oriented edge $\varepsilon=(e,\tilde e)$ of $\Gamma$ contributes a relation $R_\varepsilon$. The relation is $(a_{o(e)})^{\lambda_e} = (a_{o(\tilde e)})^{\lambda_{\tilde e}}$ if $\varepsilon$ is   in $\Gamma_0$, and $t_\varepsilon (a_{o(e)})^{\lambda_e}t_\varepsilon\m = (a_{o(\tilde e)})^{\lambda_{\tilde e}}$ if $\varepsilon$ is  not  in $\Gamma_0$ (exchanging $e$ and $\tilde e$ amounts to replacing $t_\varepsilon$ by its inverse). This will be called a \emph{standard presentation} of $G$, and the generating set  will be called a \emph{standard generating set} (associated to $\Gamma$ and $\Gamma_0$).

The group $G$ represented by $\Gamma$ does not change if one changes the sign of all labels near a given vertex $v$, or if one changes the sign of the labels $\lambda_e$, $\lambda_{\tilde e}$ carried by a given non-oriented edge $\varepsilon$. These will be called \emph{admissible sign changes}. 

A GBS group is \emph{elementary} if it is isomorphic to $\Z$, or $\Z^2$, or the Klein bottle group $K=\langle x,y\mid x^2=y^2\rangle = \langle a,t\mid tat\m=a\m\rangle$. These are the only virtually abelian GBS groups, and they have very special properties. Unless mentioned otherwise, our results apply to all GBS groups, but 
 we do not always provide proofs for elementary groups. 

A   labelled graph $\Gamma$ is \emph{minimal} if its Bass-Serre tree  $T$  contains no proper $G$-invariant subtree; this is equivalent to no label near a terminal vertex being equal  to $\pm1$. If $\Gamma$ is minimal, then $G$ is elementary if and only if $T$ is a point or a line. 

The graph $\Gamma$ is \emph{reduced} \cite{Fodef} if any edge $e$ such that $\lambda_e=\pm1$ is a loop ($e$ and $\tilde e$ have the same origin). Any labelled graph may be made reduced by a sequence of  \emph{elementary collapses} (see Figure \ref{col}); these collapses do not change $G$. 

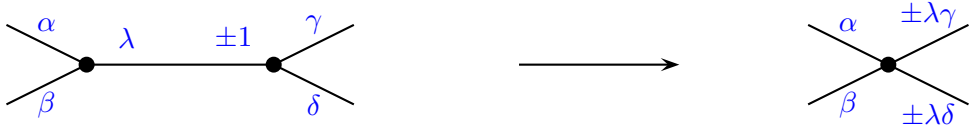
\begin{figure}[h]
\begin{center}
\begin{pspicture}(100,105) 
\rput   (0,50){
\rput(-100,0)
{
   \psline  (0,0)(70,0)   
    \psline  (0,0)(-30,15)
     \psline  (0,0)(-30,-15)
  \psline(70,0)(100,15)
    \psline(70,0)(100,-15)

 \pscircle*( 0,0) {3}
   \pscircle*( 70,0) {3} 
   
      {\blue
   \rput(-15,15) {$ \alpha$}
     \rput(-15,-15) {$ \beta$}
 
   \rput(85,15) {$ \gamma$}
     \rput(85,-15) {$\delta$}
 
    \rput(15,10) {$ \lambda$}
      \rput(55,10) {$ \pm1$}
             
   }      

 }
\rput(200,0)
{

 \psline  (0,0)(-30,15)
 \psline  (0,0)(-30,-15)
 \psline  (0,0)(30,15)
 \psline  (0,0)(30,-15)

 \pscircle*( 0,0) {3}
 
    {\blue
     \rput(-15,15) {$ \alpha$}
     \rput(-15,-15) {$ \beta$}
 
   \rput(15,19) {$\pm\lambda \gamma$}
     \rput(15,-19) {$\pm\lambda\delta$}
   }      

}   
   \rput (60,0){ 
\psline[arrowsize=5pt]  {->}(0,0)(60,0)
} 
 }
\end{pspicture}
\end{center}
\caption{elementary collapse ($G$ does not change)
} \label{col}
\end{figure}

There may be infinitely many reduced labelled graphs representing a given $G$.
 One case when uniqueness holds (up to admissible sign changes) is when $\Gamma$ is \emph{strongly slide-free}  \cite{Fodef}: if $e$ and $e'$ are edges with the same origin, $\lambda_e$ does not divide $\lambda_{e'}$.

 In general,  two reduced labelled graphs $\Gamma_1,\Gamma_2$  represent isomorphic GBS groups $G_1$, $G_2$ if and only if one can pass from $\Gamma_1$ to $\Gamma_2$ by  a finite sequence of admissible sign changes, slide moves, induction moves, $\cala^{\pm1}$-moves \cite{CFWh} (there is one exception to this fact, when $G_1=G_2=K$; the two presentations given above correspond to different reduced graphs: the first is an edge with labels 2 and 2, the second a loop with labels 1 and $-1$).  

We denote by $\beta(\Gamma)$ the first Betti number of $\Gamma$. If $G\ne K$, all labelled graphs representing 
$G$ have the same $\beta$, and we sometimes denote it by $\beta(G)$.
 
Let $G$ be represented by $\Gamma$. 
An element $g\in G$ is \emph{elliptic} if it fixes a point in the Bass-Serre tree $T$ associated to $\Gamma$, or equivalently if some conjugate of $g$ belongs to a vertex group of $\Gamma$. All elliptic elements are pairwise  \emph{commensurable} (they have a common power). An element which is not elliptic is \emph{hyperbolic}. If $G$ is non-elementary, ellipticity or hyperbolicity does not depend on the choice of $\Gamma$ representing $G$.  

 Any finitely generated subgroup of $G$ is free (if it acts freely on $T$) or is a GBS group (if the action is not free).
 
The quotient of $G$ by the subgroup generated by all elliptic elements may be identified with the topological fundamental group $\pi_1^{top}(\Gamma)$ of the graph $\Gamma$, a free group of rank $\beta(\Gamma)$.    
In particular, any generating set of $G$ contains at least $\beta(\Gamma)$ hyperbolic elements. 

If $G$ is non-elementary, there is a \emph{modular homomorphism} $\Delta_G:G\to\Q^*$ associated to $G$. One may compute $\Delta_G(g)$ as follows: given any non-trivial elliptic element $a$, there is a non-trivial relation $ga^m g\m=a^n$,
and
$\Delta_G(g)=\frac mn$ (given $g$, the numbers $m,n$  may depend on the choice of $a$, but $m/n$ does not).  In particular, if $G=BS(m,n)=\langle a,t\mid ta^mt\m=t^n\rangle$, then $\Delta_G$ sends $a$ to 1 and $t$ to $m/n$.
Note that $h$ has to be elliptic is there is a relation $gh^m g\m=h^n$ with $m\ne\pm n$ since its translation length in $T$ satisfies $ |m |\ell(h)= |n | \ell(h)$. 

Elliptic elements have modulus 1, so $\Delta_G$ factors through $\pi_1^{top}(\Gamma)$ for any $\Gamma$ representing $G$. One may define $\Delta_G$ directly in terms of loops in $\Gamma$: if $\gamma\in\pi_{1}^{top}(\Gamma)$ is represented by an edge-loop $(e_1,\dots,e_m)$, then $\Delta_G(\gamma)=\prod_{i=1}^m\frac{\lambda_{e_i}}{\lambda_{\tilde e_i}}$. Note that $\Delta_G$ is trivial if $\Gamma$ is a tree. 

$\Delta_G$ is trivial if and only if the center of $G$ is non-trivial. In this case the center is   cyclic and only contains  elliptic elements (see \cite{Le}); moreover, there is an epimorphism $G\twoheadrightarrow\Z$ whose kernel contains no non-trivial elliptic element (see Proposition 3.3 of \cite{Le}). 

A non-elementary $G$ is \emph{unimodular} if the image of $\Delta_G$ is contained in $\{1,-1\}$. This is equivalent to $G$ having a normal infinite cyclic subgroup, and also to $G$ being virtually $F_n\times\Z$ for some $n\ge2$ (see \cite{Le}). 

\subsection{2-generated GBS groups} \label{2g}

Any quotient of a Baumslag-Solitar group is 2-generated. Conversely, we note:

\begin{lem}\label{qdeBS}
Any 2-generated GBS group $G$ is a quotient of some Baumslag-Solitar group $BS(m,n)$.
\end{lem}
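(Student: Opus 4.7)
The plan is to represent $G$ by a reduced labelled graph $\Gamma$ and invoke the structural description to be developed in this subsection: by Grushko's theorem, as carried out in \cite{Le2}, any 2-generated non-cyclic GBS group is represented by a labelled graph of one of three shapes---a segment, a circle, or a lollipop (Figure \ref{disp}). The elementary cases are dealt with immediately: $\Z^2 = BS(1,1)$ and $K = BS(1,-1)$ are themselves Baumslag-Solitar groups, while $\Z$ is a quotient of $BS(1,1)$.

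In the non-elementary case, my strategy is, for each graph shape, to exhibit a 2-generating pair $(a,t)$ of $G$ in which $a$ is elliptic. Once such a pair is in hand, $a$ and its conjugate $tat\m$ are commensurable elliptic elements, so $t a^m t\m = a^n$ for some nonzero integers $m,n$; the assignment $a \mapsto a$, $t \mapsto t$ then extends uniquely to an epimorphism $BS(m,n) \twoheadrightarrow G$.

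The choice of pair is dictated by $\Gamma$. In the segment case, I would take $a := a_{v_0}$ and $t := a_{v_k}$, the two terminal vertex-group generators: telescoping the relations $a_{v_i}^{q_i} = a_{v_{i+1}}^{r_{i+1}}$ gives $a^Q = t^R$, so $t a^Q t\m = t \cdot t^R \cdot t\m = t^R = a^Q$, yielding $BS(Q,Q) \twoheadrightarrow G$. In the circle case, take $a := a_{w_0}$ and $t$ the stable letter of the unique non-tree edge: telescoping around the cycle gives $a^X = (a_{w_{\ell-1}}^{x_{\ell-1}})^{y_1\cdots y_{\ell-1}}$, and applying the HNN relation $t a_{w_{\ell-1}}^{x_{\ell-1}} t\m = a_{w_0}^{y_\ell}$ then produces $t a^X t\m = a^Y$, whence $BS(X,Y) \twoheadrightarrow G$. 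The lollipop case combines the two: with $a := a_{v_0}$ and $t$ the stable letter of the loop, the arc identity $a^Q = a_{w_0}^R$ together with $t a_{w_0}^X t\m = a_{w_0}^Y$ yields $t a^{QX} t\m = a^{QY}$, so $BS(QX, QY) \twoheadrightarrow G$.

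The main obstacle is verifying that the prescribed pair $(a,t)$ actually generates $G$. The 2-generation hypothesis, analyzed in this subsection via \cite{Le2}, must translate into coprimality conditions on the intermediate labels of $\Gamma$ allowing every intermediate vertex-group generator to be written as a word in $a$ and $t$; the telescoping identities above are then the easy part, and the surjections onto $G$ follow at once.
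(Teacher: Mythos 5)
Your proposal is correct, but it takes a more constructive route than the paper. The paper's proof is two lines: it reduces to the non-elementary case, invokes the existence of \emph{some} generating pair $(a,t)$ with $a$ elliptic (deduced from \cite{Le2}, or from the general fact that any finite generating set of a non-free group acting on a tree is Nielsen-equivalent to one containing an elliptic element), and then observes that $a$ and $tat\m$, being elliptic, are commensurable, so a non-trivial relation $ta^mt\m=a^n$ holds and $G$ is a quotient of $BS(m,n)$. No case analysis on the shape of $\Gamma$ and no explicit identification of $(m,n)$ is needed. You instead run through the segment/circle/lollipop trichotomy, name explicit generating pairs, and telescope the edge relations to produce the specific values $(Q,Q)$, $(X,Y)$, $(QX,QY)$; your telescoping identities are all correct, and the generation claims you defer to are exactly Proposition 3.9 of \cite{Le2} as quoted in Subsection \ref{2g} (in the circle case you do need Convention \ref{bc}, i.e.\ the base vertex must be chosen to meet every plateau). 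What your approach buys is strictly more information --- it is essentially the easy direction of Theorem \ref{deBS}, which the paper proves separately later --- at the cost of depending on the full structural classification and the generation results of \cite{Le2}, where the paper's argument needs only the abstract commensurability of elliptic elements. Both are sound.
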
   

\begin{proof}
We may assume that $G$ is non-elementary.
There exists a generating pair $(a,t)$ with $a$ elliptic (this may be deduced from   \cite{Le2}, or from the  general fact 
that any finite generating set of a non-free group acting on  a tree is Nielsen-equivalent to one containing an elliptic element). 
Since $a$ and $tat\m$ are commensurable, $a$ and $t$ satisfy a non-trivial relation $ta^mt\m=a^n$ and $G$ is a quotient of $BS(m,n)$.
\end{proof}

In Sections \ref{sens1}, \ref{cons}, \ref{sens2} we shall study GBS groups $G$ which are    2-generated, 
and not cyclic. Note that the isomorphism problem is solvable for these groups by \cite{CFIso}.

We always  represent $G$ by a reduced labelled graph $\Gamma$. 
2-generation implies that the sum of the first Betti number $\beta (\Gamma)$ and the number of terminal vertices $t(\Gamma)$   is at most 2. This is a consequence of    Theorem 1.1 of \cite{Le2}, but it   may be deduced directly from Grushko's theorem  as follows. Add to  a   standard presentation of $G$   the relations $a_v=1$ if $v$ is not terminal, and $(a_v)^{p_v}=1$ if $v$ is terminal and $p_v$ is a prime dividing the label near $v$. This maps $G$ onto the free product of $\beta_1(\Gamma)+t(\Gamma)$ non-trivial cyclic groups, so $\beta_1(\Gamma)+t(\Gamma)\le2$. 

We deduce that  there are only three possibilities for the topological type of $\Gamma$: it may be homeomorphic to a segment, a circle, or a lollipop. 

We picture the corresponding graphs on Figure \ref{disp}, often viewing a circle as a special case of a lollipop.  The number of edges is $k$    or $k+\ell$ respectively.
 Since   $\Gamma$ is reduced,   no label is equal to $\pm1$, except possibly $x_0$ and $y_1$ when $\ell=1$. We denote by $v_i$  the vertex next to the labels $q_i$ and/or $r_i$. In the  lollipop case,  the vertex next to the labels $x_j$ and/or $y_j$ is denoted by $w_j$; there is a special vertex $v_k=w_0=w_\ell$, we will usually denote it by $w_0$.

In the lollipop case, $G$ has a standard presentation with generators $a_0,\dots, a_i,\dots,a_{k-1}$ (associated to the vertices $v_i$), $ b_0,\dots,b_j,\dots, b_{\ell-1}$ (associated to the $w_j$'s), $\tau$ (stable letter) and relations
\begin{eqnarray} 
(a_i)^{q_i}&=&(a_{i+1})^{r_{i+1}}    \textrm{ \quad for $i=0,\dots,k-2$   }     \\
(a_{k-1})^{q_{k-1}}&=&(b_0)^{r_k} \\
(b_j)^{x_j}&=&(b_{j+1})^{y_{j+1}} \textrm{ \quad for $ j=0,\dots,\ell-2$  }  \\
\tau(b_{\ell-1})^{x_{\ell-1}}\tau\m&=&(b_0)^{y_\ell}. 
\end{eqnarray}

In the segment case, we delete $b_1,\dots, b_{\ell-1}, \tau$ and relations (3) and (4), and we write $a_k$ instead of $b_0$.
 In the circle case, we delete $a_0,\dots, a_{k-1}$ and relations (1) and (2).

\begin{dfn}[Q,R,X,Y] \label{qrxy}
Given $\Gamma$ as above, we define numbers as follows: 
\begin{itemize}
 \item if $\Gamma$ is a segment, we let $Q=\prod_{i=0}^{k-1} q_i$ and $R=\prod_{i=1}^{k} r_i$;
 \item   if $\Gamma$ is  a lollipop, we let $Q=\prod_{i=0}^{k-1} q_i$, $R=\prod_{i=1}^{k} r_i$, $X=\prod_{j=0}^{\ell-1} x_j $, $Y=\prod_{j=1}^{\ell } y_j$ (with $Q=R=1$ if $\Gamma$ is a circle).
\end{itemize}
\end{dfn}

We now   recall the definition of a \emph{plateau} \cite{Le2}. For $p$ a prime, a $p$-plateau is a nonempty connected subgraph   $P\inc \Gamma$ such that, if $e$ is an oriented edge whose origin $v$ belongs to $P$, then the label $\lambda_e$ of $e$ near $v$ is divisible by $p$ if and only if $e$ is not contained in $P$. A plateau is a subgraph which is a $p$-plateau for some $p$. In particular, every terminal vertex is a plateau, and $\Gamma$ is a plateau. A plateau is \emph{interior} if it contains no terminal vertex. 

Theorem 1.1 of \cite{Le2} states that the rank $\rk(G)$ of $G$ (minimal cardinality of a generating set)  is equal to $\beta(\Gamma)+\mu(\Gamma)$, with $\beta(\Gamma)$ the first Betti number and $\mu(\Gamma)$ the minimal cardinality of a set of vertices meeting every plateau. Thus:

\begin{lem} \label{gen} Let $\Gamma$ be a reduced labelled graph representing a 2-generated GBS group.
 \begin{itemize}
  \item If $\Gamma$ is not a circle, there is no interior plateau.
 \item  If $\Gamma$ is   a circle, there is a vertex belonging to every plateau.
 \qed
\end{itemize}
\end{lem}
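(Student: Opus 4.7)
The plan is to apply Theorem 1.1 of \cite{Le2}, which states $\rk(G) = \beta(\Gamma) + \mu(\Gamma)$, where $\mu(\Gamma)$ is the minimum cardinality of a set of vertices meeting every plateau. Since $G$ is 2-generated and non-cyclic, $\rk(G)=2$, and this gives
\[
\beta(\Gamma) + \mu(\Gamma) = 2.
\]
The one extra ingredient, already recorded in the preliminaries, is that every terminal vertex of $\Gamma$ is itself a plateau (a singleton plateau); consequently any set meeting every plateau must contain every terminal vertex, simply because a singleton plateau offers only one vertex.

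I would then treat the three possible topologies separately. In the \emph{segment} case, $\beta(\Gamma)=0$ and $\Gamma$ has two terminal vertices $v_0,v_k$, both of which must belong to any meeting set, so $\mu(\Gamma)\ge 2$. Combined with $\beta+\mu=2$, this forces $\mu(\Gamma)=2$ and a minimal meeting set must be exactly $\{v_0,v_k\}$. An interior plateau contains no terminal vertex by definition, so its existence would require a third vertex and push $\mu(\Gamma)$ to at least $3$ -- impossible. In the \emph{lollipop} case, $\beta(\Gamma)=1$, so $\mu(\Gamma)=1$, and the realizing vertex must be the unique terminal vertex (which is a singleton plateau); an interior plateau avoids that vertex, so its presence would again force $\mu(\Gamma)\ge 2$, contradiction. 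This proves the first bullet.

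In the \emph{circle} case there are no terminal vertices and $\beta(\Gamma)=1$, hence $\mu(\Gamma)=1$: some single vertex of $\Gamma$ meets -- equivalently, belongs to -- every plateau, which is exactly the content of the second bullet. The argument is essentially a direct rank computation once Theorem 1.1 of \cite{Le2} is available, so I do not expect any serious obstacle; the only point worth checking carefully is that singleton plateaus coming from terminal vertices cannot be ``shared'' in a minimal meeting set, which is what pins down the realizing set in the segment and lollipop cases and thereby rules out interior plateaus.
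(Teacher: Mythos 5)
Your proof is correct and takes essentially the same approach as the paper: the lemma is stated there as an immediate consequence of the rank formula $\rk(G)=\beta(\Gamma)+\mu(\Gamma)$ from Theorem 1.1 of \cite{Le2} (hence the \qed in the statement itself), and your case analysis just spells out the details, using that each terminal vertex is a singleton plateau. Nothing is missing.
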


\begin{conv}\label{bc}
 When $\Gamma$ is a circle, we always choose the numbering of vertices   so that $w_0$ belongs to every plateau. 
\end{conv}

 With this convention, Lemma \ref{gen} implies:

\begin{lem} \label{copr}
 \begin{itemize}
 \item  $q_j$ and $r_i$    are coprime for $1\le i\le j\le k-1$; 
 \item If $\Gamma$ is a lollipop (possibly a circle), then   $x_j$ and $y_i$ are coprime for $1\le i\le j\le \ell -1$;
if a prime $p$ divides $R$, then $p$ divides $X$ or $Y$, but not both.  
\end{itemize}

\end{lem}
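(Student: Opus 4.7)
The plan is proof by contradiction via Lemma~\ref{gen}: whenever a coprimality or divisibility condition fails at a prime $p$, I exhibit a $p$-plateau that either has no terminal vertex (contradicting the first bullet of Lemma~\ref{gen}) or, in the pure circle case, avoids $w_0$ (contradicting Convention~\ref{bc}).

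For the first bullet, assume $p\mid q_j$ and $p\mid r_i$ with $1\le i\le j\le k-1$, and set $j'=\min\{m\in[i,j]:p\mid q_m\}$, $i'=\max\{m\in[i,j']:p\mid r_m\}$. By the minimality of $j'$ and the maximality of $i'$, $p\nmid q_m$ for $m\in[i',j'-1]$ and $p\nmid r_m$ for $m\in[i'+1,j']$. Hence the subgraph $P$ on vertices $v_{i'},\dots,v_{j'}$ (with the intermediate edges) satisfies the $p$-plateau condition at every vertex: the interior edges have neither label divisible by $p$, while the two boundary edges leaving $P$ have their near-$P$ labels $r_{i'}$ and $q_{j'}$ divisible by~$p$. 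Since $1\le i'\le j'\le k-1$, $P$ contains no terminal vertex, contradicting Lemma~\ref{gen}. The identical construction with $(q,r,v)$ replaced by $(x,y,w)$ yields a $p$-plateau on $\{w_{i'},\dots,w_{j'}\}$ avoiding $w_0$, which settles the coprimality of $x_j$ and $y_i$ for $1\le i\le j\le \ell-1$ (interior plateau in the lollipop case; plateau missing $w_0$ in the circle case).

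For the last assertion we may assume $k\ge 1$ (otherwise $R=1$ and the claim is vacuous). Assume $p\mid R$ while $p\mid X$ and $p\mid Y$ are either both true or both false; we derive a contradiction. Put $i^*=\max\{i:p\mid r_i\}\ge 1$. By the first bullet (just proved), $p\nmid q_j$ for $j\in[i^*,k-1]$, so every stem edge from $v_{i^*}$ up to $w_0$ has both labels coprime to $p$ (the case $i^*=k$ yields the degenerate stem contribution $\{w_0\}$). Let $P$ be the connected component of $w_0$ in the subgraph obtained by deleting every edge one of whose labels is divisible by $p$. When $p\nmid X$ and $p\nmid Y$, every circle edge survives, so $P$ contains the entire circle, and its unique boundary edge (the stem cut at $v_{i^*}$) is clean because $p\mid r_{i^*}$. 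When both $p\mid X$ and $p\mid Y$, the circular part of $P$ is the arc through $w_0$ running from $w_{i_Y^+}$ to $w_{j_X^-}$, where $j_X^-=\min\{j:p\mid x_j\}$ and $i_Y^+=\max\{i:p\mid y_i\}$; the coprimality of $x_j$ and $y_i$ just proved guarantees that these are indeed the two circle cuts closest to $w_0$, so that the near-$P$ labels $r_{i^*}$, $x_{j_X^-}$, $y_{i_Y^+}$ at the three boundary edges of $P$ are all divisible by $p$. In either situation $P$ is a $p$-plateau with no terminal vertex (since $i^*\ge 1$), contradicting Lemma~\ref{gen}.

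The main delicate point is this last case: when both $p\mid X$ and $p\mid Y$, one must rule out the scenario where the circle cut closest to $w_0$ on one side is caused only by a divisible label on the \emph{far} side of its edge, which would leave the near-$P$ side of that boundary unclean. This is exactly what the first half of the second bullet, already proved, forbids.
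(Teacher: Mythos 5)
Your proof is correct and follows essentially the same route as the paper's: in each case you build the same extremal $p$-plateau (an interior segment of the stem or circle, or the component of $w_0$ bounded by the outermost labels divisible by $p$) and contradict Lemma \ref{gen} or Convention \ref{bc}. The paper states these constructions more tersely; your explicit check that the near-side boundary labels are divisible by $p$ is precisely the point the paper leaves implicit.
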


\begin{proof}
We argue by contradiction, showing the existence of a    plateau   contradicting Lemma \ref{gen}.  Suppose that a prime $p$ divides $q_j$ and $r_i$, with   $i\le j$ and $j-i$ minimal. Then the segment bounded by the vertices $v_i$ and $v_j$ is an interior  $p$-plateau, a contradiction. In the lollipop case, the same argument applies to $x_j$ and $y_i$.

Now suppose that $p$ divides $r_i$, with $i $ maximal. If $p$ does not divide $XY$, then $v_i$ is a boundary point of a $p$-plateau (containing the circle). If $p$ divides both $X$ and $Y$, say $p | x_j$ with $j\ge0$ minimal and $p | y_{j'}$ with $j'\le\ell$ maximal (and necessarily $j<j'$), then $v_i$, $w_j$ and $w_{j'}$ bound a $p$-plateau.
\end{proof}

Consider a standard presentation of a GBS group $G$, with generators $a_v$ and $t_\varepsilon$.
 Proposition 3.9 of  \cite{Le2} states  that, given any subset $V_1\inc V$ meeting every plateau, $G$ is generated by the stable letters $t_\varepsilon$ and the $a_v$'s for $v\in V_1$ (in other words,   the $a_v$'s for $v\notin V_1$ may be removed from the generating set).

 Applied to a 2-generated $G$, this says that 
$G$ 
  is generated by $a_0$ and $a_k$ in the segment case, by $a_0$ and $\tau$  in the lollipop case. 
      If $\Gamma$ is a circle, 
the generators $a_i$ do not exist; 
$G$ is generated by $b_0$ and $\tau$ (assuming Convention \ref{bc}). 
 
\begin{lem} \label{lesp}
Let $(s,s')$ be a generating pair of $G$, with $s$ elliptic. Then $s$ is conjugate to $(a_v)^{r_0}$, with $r_0\in\Z$ and:
 \begin{itemize}
 \item if $\Gamma$ is a segment,      $v$ is one of the  terminal vertices $v_0$ or $v_k$ of $\Gamma$;
  \item if $\Gamma$ is a circle,   
  $v $ belongs to every plateau;
 \item if $\Gamma$ is a lollipop,     
 $v=v_0$ is  the terminal vertex of $\Gamma$.  
   \end{itemize}
 Moreover, ${r_0}\wedge p=1$ for each $p$ such that $v$ belongs to  a proper $p$-plateau. 
 \end{lem}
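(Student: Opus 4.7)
Since $s$ is elliptic, it fixes a vertex of the Bass--Serre tree, so lies in some conjugate of $\langle a_v\rangle$. Conjugating the pair $(s,s')$, I may assume $s = a_v^{r_0}$ with $v \in V(\Gamma)$ and $r_0 \neq 0$. The argument then exploits that $\bar G := G/\langle\langle s\rangle\rangle$ must be cyclic (generated by the image of $s'$).

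For the location of $v$, in each topology case I would show that a wrong vertex forces $\bar G$ non-cyclic, by exhibiting a non-cyclic quotient. In the segment case with $v = v_i$ interior, the splitting $G = A *_{\langle a_v\rangle} B$ descends to $\bar G = \bar A *_C \bar B$; reducedness of $\Gamma$ (labels $r_i, q_i \neq \pm 1$) guarantees that both $\bar A$ and $\bar B$ strictly contain the amalgamated $C = \langle a_v\rangle/\langle a_v^{r_0}\rangle$, so Bass--Serre normal forms yield non-cyclicity. The lollipop case with $v$ on the handle is identical; when $v$ is on the circular part one uses an HNN analogue (Britton's lemma) of the same argument, based on cutting the loop at an edge adjacent to $v$. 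In the circle case, the same HNN-collapse argument applies together with Lemma \ref{gen}: $v$ must lie in every plateau, otherwise one can cut at an edge that exits a plateau not containing $v$ and the resulting quotient retains non-trivial HNN structure.

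For the coprimality, assume $v$ lies in a proper $p$-plateau $P$ and $p \mid r_0$, so $\langle\langle s\rangle\rangle \subseteq \langle\langle a_v^p\rangle\rangle$ and it suffices to see $G/\langle\langle a_v^p\rangle\rangle$ is non-cyclic. Killing $a_v^p$ and propagating through the edge relations inside $P$ (whose labels are coprime to $p$) forces every $a_w$ with $w \in P$ to have order exactly $p$, so the subgroup generated by the vertices of $P$ surjects onto $\Z/p$. Each edge of $\Gamma$ leaving $P$ carries a label divisible by $p$ on the $P$-side, so in the quotient the image of the edge subgroup is trivial and the corresponding amalgamation collapses to a free product. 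Thus $G/\langle\langle a_v^p\rangle\rangle$ surjects onto $\Z/p * H$ with $H$ non-trivial (since $P$ is proper and $\Gamma$ is reduced, the complement of $P$ contributes at least one non-trivial torsion vertex), which is non-cyclic, a contradiction.

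The main obstacle is the plateau-collapse step when $P$ contains several vertices with internal topology: one must verify that killing the single element $a_v^p$ indeed reduces $P$'s contribution to $\Z/p$ uniformly across all its vertex generators, and that each boundary edge genuinely collapses its amalgamation to a free product rather than to a smaller amalgamated or HNN product. The uniform treatment of the lollipop and circle topologies via HNN-extensions also requires some care with loops and basepoint choices.
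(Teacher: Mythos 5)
Your skeleton is the same as the paper's: an elliptic $s$ is conjugate into some $\langle a_v\rangle$, and since $(s,s')$ generates $G$, any quotient of $G$ in which $s$ dies must be cyclic; one then rules out the bad positions of $v$ and the bad exponents $r_0$ by exhibiting a non-cyclic quotient killing $s$. The divergence, and the gap, is in how that quotient is produced. The paper never analyzes $G/\langle\langle s\rangle\rangle$ itself: it passes to a much coarser, explicitly computable quotient --- in the segment case, adding $a_w=1$ for every non-terminal $w$ gives $\Z/q_0*\Z/r_k$, a rank-$2$ group in which $s$ dies whenever $v$ is interior; for the coprimality statement it adds $(a_w)^p=1$ for all $w$ in the plateau $P$ and $a_w=1$ elsewhere (off the terminal vertices), citing Lemma 3.4 and Corollaries 3.6--3.7 of \cite{Le2} for the rank count. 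Your attempt to work directly with $G/\langle\langle a_v^p\rangle\rangle$ founders on exactly the point you flag, and it is a genuine obstruction, not a technicality: propagating $a_v^p=1$ across an edge of $P$ with labels $\lambda,\mu$ prime to $p$ only yields $a_w^{\mu p}=1$, not $a_w^p=1$, so $P$ does not collapse to $\Z/p$ ``uniformly''; worse, nothing in your argument shows that $a_v$ itself has order exactly $p$ (rather than $1$) in that quotient, which is what the surjection onto $\Z/p * H$ requires. The fix is precisely the paper's move: you may simply \emph{impose} the relations $(a_w)^p=1$ for all $w\in P$ as additional relations, since any quotient killing $s=a_v^{r_0}$ (with $p\mid r_0$) suffices for the contradiction; there is no need for these relations to be consequences of $a_v^p=1$.

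The circle and lollipop cases are also too loosely treated. An HNN extension over a trivial base is $\Z$, hence cyclic, so ``retaining non-trivial HNN structure'' after the quotient amounts to showing that some vertex group survives non-trivially in $G/\langle\langle s\rangle\rangle$ --- and that survival is exactly what is at stake (compare $BS(2,3)/\langle\langle a\rangle\rangle\simeq\Z$, where the base dies completely). Likewise, your justification that $H$ is non-trivial via ``a non-trivial torsion vertex in the complement of $P$'' does not apply to circles, which have no terminal vertices; there the non-trivial free factor must come from $\pi_1^{top}(\Gamma)$. Both issues are resolved by the same plateau quotients as above: if $v$ misses a $p$-plateau $P$, killing $a_w$ for $w\notin P$ and $(a_w)^p$ for $w\in P$ yields a free product of $\Z/p$ with a non-trivial cyclic or free group, which is non-cyclic and kills $s$. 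As written, your proof has the right idea but does not close either the plateau-collapse step or the circle/lollipop survival step.
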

 
 \begin{rem} \label{rr}
 We may be more explicit about ${r_0}$, using the notations of Definition \ref{qrxy}. If $\Gamma$ is a segment, then ${r_0}\wedge Q=1$  if  $s$ is (up to conjugacy) a power of $a_0$, and ${r_0}\wedge R=1$  if  $s$ is (up to conjugacy) a power of $a_k$. In the circle/lollipop case, 
  ${r_0}$ is coprime with $QX\wedge QY$  because  $v$ belongs to a $p$-plateau $P$ with $w_0\notin P$ if and only if $p$ divides $Q$, to a proper $p$-plateau containing $w_0$ if and only if   $p$ divides $X$ and $Y$ but not $Q$.
  \end{rem}
 
\begin{proof}  If $\Gamma$ is a segment of length $>1$, 
 adding to a standard presentation the  relations $a_v=1$ for $v $ not terminal yields a group of rank 2 (the free product of two nontrivial cyclic groups). It follows that $s$ must be conjugate to   $(a_v)^{r_0}$ with $v$ terminal. If $v$ belongs to a proper $p$-plateau $P$, we add the relations $(a_w)^p=1$ for $w\in P$, and $a_w=1$ if $w\notin P$ and $w$ is not terminal. We obtain a group of rank 2 by Lemma 3.4 of \cite{Le2}, so $p$ does not divide ${r_0}$.
 
In the circle/lollipop case, $s$ is conjugate to $(a_v)^{r_0}$ with $v$ belonging to every plateau  by  Corollary 3.6 of \cite{Le2} (which is proved also  by adding relations killing powers of elliptic elements). Moreover, ${r_0}$ is not divisible by $p$ if $v$ belongs to a $p$-plateau by Corollary 3.7 of \cite{Le2}. 
\end{proof}

We conclude this subsection with the following useful fact.

\begin{lem}\label{padun}
If a GBS group $G$ is 2-generated and is not a solvable $BS(1,n)$, it may be represented by a labelled graph $\Gamma$ with no label equal to $\pm1$.
\end{lem}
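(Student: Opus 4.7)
The plan is to pick any reduced labelled graph $\Gamma$ representing $G$ and argue by cases on its topological type---segment, circle, or lollipop (Section~\ref{2g}). In a reduced graph only a loop edge can carry a $\pm1$ label, so the conclusion is immediate whenever $\Gamma$ contains no loop edge: this covers segments and circles/lollipops with $\ell\ge 2$. If $\Gamma$ is a circle with $\ell=1$, then $\Gamma$ is a single loop and $G=BS(x_0,y_1)$, which either satisfies the conclusion (when $|x_0|,|y_1|\ge 2$) or is a solvable $BS(1,n)$, ruled out by hypothesis. So the only remaining case is $\Gamma$ a lollipop with $\ell=1$, $k\ge 1$, and at least one loop label equal to $\pm1$.

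In that case, by performing an admissible sign change on the loop edge and, if necessary, reversing its orientation (which swaps $x_0\leftrightarrow y_1$ and replaces $\tau$ by $\tau\m$), I may assume $x_0=1$. Applied with $X=x_0=1$, Lemma~\ref{copr} then gives $\mathrm{primes}(R)\subset\mathrm{primes}(y_1)$, so I may choose $n\ge1$ with $r_k\mid y_1^n$. The same lemma forces $|y_1|\ge 2$: otherwise $Y=\pm1$ and then no prime would divide $R$, contradicting $k\ge 1$ and reducedness. I now build a new labelled graph $\Gamma'$ by removing the vertex $w_0$, the loop at $w_0$, and the segment edge $v_{k-1}w_0$, and attaching a new loop at $v_{k-1}$ with labels $(q_{k-1},q_{k-1}y_1)$ (placing $q_{k-1}$ on the side from which $\tau$ conjugates). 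Since $|q_{k-1}|\ge 2$ and $|q_{k-1}y_1|\ge 4$, every label of $\Gamma'$ has absolute value $\ge 2$, so it will suffice to show $G\cong\pi_1(\Gamma')$.

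For the homomorphism $\pi_1(\Gamma')\to G$, I will verify inside $G$ that
\[
\tau\,a_{k-1}^{q_{k-1}}\,\tau\m\;=\;\tau\,b_0^{r_k}\,\tau\m\;=\;(b_0^{r_k})^{y_1}\;=\;a_{k-1}^{q_{k-1}y_1},
\]
which is the new loop relation; the remaining defining relations of $\Gamma'$ already hold in $G$. For the reverse direction I will set
\[
\tilde b_0\;=\;\tau^{-n}\,a_{k-1}^{q_{k-1}y_1^n/r_k}\,\tau^n
\]
inside $\pi_1(\Gamma')$ and check, by iterating the new loop relation, that $\tilde b_0^{\,r_k}=a_{k-1}^{q_{k-1}}$ and $\tau\tilde b_0\tau\m=\tilde b_0^{\,y_1}$, making $a_i\mapsto a_i$, $\tau\mapsto\tau$, $b_0\mapsto\tilde b_0$ into a well-defined homomorphism $G\to\pi_1(\Gamma')$. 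The two maps are mutual inverses on $a_0,\dots,a_{k-1}$ and on $\tau$ by construction, and on $b_0$ this follows from $\tau^n b_0\tau^{-n}=b_0^{y_1^n}$ inside $G$. The main obstacle is arriving at the explicit formula for $\tilde b_0$; this is exactly where the divisibility $r_k\mid y_1^n$ supplied by Lemma~\ref{copr} (and ultimately the 2-generation of $G$) is used.
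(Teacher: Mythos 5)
Your proof is correct, and its skeleton coincides with the paper's: both arguments observe that in a reduced graph only loop edges can carry a $\pm1$ label, reduce to the case of a lollipop with $\ell=1$ whose loop has a $\pm1$ label, and then invoke Lemma~\ref{copr} to see that every prime dividing $r_k$ divides the other loop label. Where you diverge is in how the offending edge is eliminated. The paper represents $G$ by a reduced graph with a \emph{minimal} number of edges, cites induction moves from \cite{FoGBS,CFIso} to turn $r_k$ into $\pm1$, collapses the edge $v_{k-1}w_0$, and derives a contradiction with minimality; you instead write down the target graph $\Gamma'$ explicitly (a lollipop with loop labels $q_{k-1}$ and $q_{k-1}y_1$ at $v_{k-1}$) and verify the isomorphism by hand, with the element $\tilde b_0=\tau^{-n}a_{k-1}^{q_{k-1}y_1^n/r_k}\tau^n$ providing the inverse map precisely because $r_k\mid y_1^n$. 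Your computation is in effect the algebraic content of the induction move followed by a collapse, so what you gain is a self-contained argument that needs no external citation and no minimality device (your $\Gamma'$ visibly has all labels of absolute value at least $2$, so no iteration is needed); what you give up is brevity. All the individual verifications (the normalization $x_0=1$ via sign changes and orientation reversal, the exclusion of $|y_1|=1$ via reducedness of the non-loop edge $v_{k-1}w_0$, and the two relation checks for $\tilde b_0$) are correct.
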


\begin{proof}
Represent $G$ by a reduced $\Gamma$ with minimal number of edges. If some label equals $\pm1$, then $\Gamma$ is a lollipop with $\ell=1$: the circle subgraph consists of a single edge with at least one label equal to $\pm1$, say $y_1=\pm1$. Since $G$ is not solvable, $\Gamma$ is not a circle, so there is another edge $e$ attached to the vertex $w_0$, with label $r_k$. By Lemma \ref{copr}, every prime dividing $r_k$ divides $x_0$. One may then perform induction moves (see \cite{FoGBS, CFIso})   to make $r_k$ equal to $\pm 1$. Collapsing the edge $e$ (as on Figure \ref{col}) yields a graph with one less edge representing $G$, a contradiction.\end{proof}
  
\section{Maps between GBS groups} \label{gene}

\begin{lem}\label{koc}
 Let $f:G\to  G'$ be a homomorphism between   GBS groups, with $f(G)$ non-elementary.  If $a$ is elliptic in $G$, then $f(a)$ is elliptic in $G'$.
 \end{lem}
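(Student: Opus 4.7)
The plan is to argue by contradiction, assuming $f(a)$ is hyperbolic in the Bass-Serre tree $T'$ of $G'$ with positive translation length $\ell$. The case $a=1$ is trivial, so assume $a\ne1$. First note that $G$ is itself non-elementary: any GBS quotient of a virtually abelian (hence elementary) GBS group is again virtually abelian, whereas $f(G)$ is non-elementary by hypothesis.

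I would first reduce to the case that $G$ is unimodular. Given any $g\in G$, the elliptic elements $a$ and $gag\m$ are commensurable, so there exist $p,q\ne0$ with $ga^pg\m=a^q$. Applying $f$ yields $f(g)f(a)^pf(g)\m=f(a)^q$ in $G'$, and comparing translation lengths in $T'$ forces $|p|\ell=|q|\ell$, hence $|p|=|q|$. Thus $\Delta_G(g)=p/q=\pm1$ for every $g\in G$, i.e.\ $G$ is unimodular.

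Since $G$ is non-elementary and unimodular, the preliminaries provide a normal infinite cyclic subgroup $\langle z\rangle\inc G$. The generator $z$ must be elliptic in $G$: otherwise the relations $gzg\m=z^{\pm1}$ (forced by $\langle z\rangle$ being normal and infinite cyclic) would make every $g\in G$ preserve the axis of $z$, so $G$ would stabilize a line in its Bass-Serre tree and be elementary. Now $a$ and $z$ are both nontrivial and elliptic in $G$, so commensurability yields $a^r=z^s$ with $r,s\ne0$, and hence $f(a)^r=f(z)^s$ in $G'$.

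The main obstacle is showing that $f(z)$ is elliptic in $G'$. Applying $f$ to the relations $gzg\m=z^{\pm1}$ gives $f(g)f(z)f(g)\m=f(z)^{\pm1}$ for every $g\in G$. If $f(z)$ were hyperbolic with axis $L\inc T'$, every $f(g)$ would preserve $L$, so $f(G)$ would stabilize a line in $T'$. But any subgroup of $G'$ stabilizing a line $L$ is virtually abelian: the kernel of its action on $L$ lies in a vertex stabilizer $\cong\Z$, while the image lies in $\mathrm{Isom}(L)\cong D_\infty$, which is itself virtually $\Z$. Such a subgroup is therefore elementary, contradicting the non-elementarity of $f(G)$. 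Hence $f(z)$ is elliptic, and so $f(a)^r=f(z)^s$ is elliptic (possibly trivial). Since $G'$ is torsion-free, either $f(a)=1$, which is elliptic, or $f(a)^r$ is a nontrivial elliptic element, forcing $f(a)$ itself to be elliptic because a power of a hyperbolic element is hyperbolic. Either way we contradict the assumption that $f(a)$ is hyperbolic.
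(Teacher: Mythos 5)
Your proof is correct, but it takes a noticeably longer route than the paper's. Both arguments rest on the same mechanism: if $f(G)$ leaves a line in the Bass--Serre tree of $G'$ invariant, then $f(G)$ is virtually abelian, contradicting non-elementarity. The paper applies this mechanism directly to the axis $A$ of $f(a)$: since $a$ and $gag^{-1}$ have a common nontrivial power, so do $f(a)$ and $f(g)f(a)f(g)^{-1}$, and a hyperbolic element shares its axis with all its nonzero powers, so $f(g)A=A$ for every $g\in G$ and the contradiction is immediate. You instead first deduce unimodularity of $G$ from the translation-length identity $|p|\ell=|q|\ell$, invoke the structural fact that a non-elementary unimodular GBS group has a normal infinite cyclic subgroup $\langle z\rangle$, show that $z$ and then $f(z)$ are elliptic via the invariant-line argument, and finally transfer ellipticity back to $f(a)$ through commensurability. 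Each step checks out (including the preliminary observation that $G$ itself must be non-elementary, which you do need in order to speak of $\Delta_G$ and of the normal cyclic subgroup), so the proof is valid; but the detour is unnecessary, since the commensurability of $a$ with its conjugates already makes the axis of $f(a)$ itself $f(G)$-invariant. Your version costs an extra input from the structure theory of unimodular GBS groups quoted in the preliminaries, and buys essentially nothing here --- though the intermediate observation that a hyperbolic image of a nontrivial elliptic element forces the image of $\Delta_G$ to lie in $\{\pm1\}$ is correct and is the kind of remark that is useful elsewhere (compare Lemma \ref{modeg}).
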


The   result of \cite{Ko} is a special case.

\begin{proof} (compare \cite{Fodef}, proof of Corollary 6.10).
Let $T$ be  a tree on which $G'$ acts with cyclic stabilizers. We let $G$ act on $T$ through $f$. By way of contradiction, suppose that $a$ acts hyperbolically on $T$. It has an axis $A$, the unique $a$-invariant line; it is also the axis of any power $a^n$ with $n\ne 0$. If $g\in G$, the elements $a$ and $gag\m$ have a common power, so $g$ leaves $A$ invariant. It follows that   the line $A$ is $f(G)$-invariant, so $f(G)$ is elementary 
  ($\Z$, $\Z^2$, or $K$),  
a contradiction. 
\end{proof}

\subsection{Elliptic-friendly homomorphisms}

\begin{lem}\label{dic}
 Let $f:G\to G'$ be a homomorphism from a non-elementary  GBS group to a torsion-free group. Suppose that  
  $\ker f$ contains a non-trivial elliptic element. Then $\ker f$ contains all elliptic elements. If $\Gamma$ is any labelled graph representing $G$,  then $f$ factors through the topological fundamental group of $\Gamma$, and therefore   $\beta (\Gamma)\ge \rk(f(G))$.
\end{lem}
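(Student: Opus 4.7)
My approach is to exploit two structural facts recalled in Section \ref{prel}: first, that any two elliptic elements of a GBS group are commensurable (some non-trivial powers are conjugate), and second, that the quotient of $G$ by the (normal) subgroup generated by all elliptic elements is the free group $\pi_1^{top}(\Gamma)$ of rank $\beta(\Gamma)$.

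The first step is to show that every elliptic element of $G$ lies in $\ker f$. Fix a non-trivial elliptic element $a\in\ker f$, and let $b\in G$ be an arbitrary elliptic element. Commensurability provides nonzero integers $m,n$ and an element $g\in G$ such that $a^m=gb^ng\m$. Applying $f$ and using $f(a)=1$ yields $f(g)f(b)^nf(g)\m=1$, hence $f(b)^n=1$. Since $G'$ is torsion-free and $n\ne 0$, this forces $f(b)=1$.

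Once every elliptic element sits in $\ker f$, the map $f$ factors through the natural quotient $G\twoheadrightarrow \pi_1^{top}(\Gamma)$, producing an induced homomorphism $\bar f:\pi_1^{top}(\Gamma)\to G'$ whose image is $f(G)$. Because $\pi_1^{top}(\Gamma)$ is free of rank $\beta(\Gamma)$, its image under $\bar f$ is generated by at most $\beta(\Gamma)$ elements, so $\rk(f(G))\le\beta(\Gamma)$.

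The only delicate point is the precise meaning of \emph{commensurable}: elliptic elements lying in different vertex groups need not have literally equal powers, only conjugate ones. This subtlety is harmless here, since applying $f$ to the conjugacy relation still kills $f(b)^n$, and the torsion-free hypothesis on $G'$ does the rest. No genuine obstacle arises; the lemma is essentially a formal consequence of the two structural facts quoted above.
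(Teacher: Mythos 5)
Your proof is correct and follows essentially the same route as the paper: commensurability of elliptic elements plus torsion-freeness of $G'$ kills all elliptic elements, and then $f$ factors through $\pi_1^{top}(\Gamma)$, a free group of rank $\beta(\Gamma)$. (The paper's convention is in fact that commensurable elliptic elements have a literally common power, so your extra care with conjugation is harmless but not needed.)
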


\begin{proof} 
The first assertion holds because any two elliptic elements of $G$ are commensurable (they have a common power). If $\Gamma$ is any graph of groups, its topological fundamental group   is the quotient of $G$ by the (normal) subgroup generated by all elliptic elements. 
\end{proof}

We say that $f$ is \emph{elliptic-friendly} if $\ker f$ contains no non-trivial elliptic element. Lemma \ref{dic} clearly implies:

\begin{cor}\label{ef}  
If $G$ is a non-elementary GBS group with $\beta (G)=1$, and $G'$ is   torsion-free, any homomorphism 
$f:G\to G'$ with non-cyclic image 
is elliptic-friendly. \qed
\end{cor}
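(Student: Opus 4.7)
The plan is to obtain the corollary as a direct contrapositive consequence of Lemma \ref{dic}. Assume towards a contradiction that $f:G\to G'$ is not elliptic-friendly, so $\ker f$ contains some non-trivial elliptic element $a$. Since $G$ is non-elementary, in particular $G\not\cong K$, the first Betti number is a well-defined invariant of $G$: every labelled graph $\Gamma$ representing $G$ satisfies $\beta(\Gamma)=\beta(G)=1$. Fix any such reduced $\Gamma$.

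Now invoke Lemma \ref{dic}. Because $G'$ is torsion-free and any two elliptic elements of $G$ are commensurable, the lemma upgrades $a\in\ker f$ to the fact that all elliptic elements lie in $\ker f$, whence $f$ factors through the topological fundamental group $\pi_1^{\mathrm{top}}(\Gamma)$, which is free of rank $\beta(\Gamma)=1$, i.e.\ infinite cyclic. Consequently $f(G)$ is a quotient of an infinite cyclic group and is therefore cyclic, contradicting the hypothesis that $f(G)$ is non-cyclic. Equivalently, one may just quote the inequality $\beta(\Gamma)\ge \rk(f(G))$ from Lemma \ref{dic} to conclude $\rk(f(G))\le 1$.

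There is no real obstacle here: the entire content is a one-line application of Lemma \ref{dic} once one has noted that a non-elementary GBS group is distinct from $K$, so that $\beta(G)=1$ unambiguously gives $\beta(\Gamma)=1$ for every representing graph. The torsion-freeness of $G'$ is used only indirectly, inside Lemma \ref{dic}, to transfer the kernel from one elliptic element to all of them via commensurability.
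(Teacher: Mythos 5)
Your argument is correct and is exactly the intended one: the paper gives no separate proof, stating only that Lemma \ref{dic} ``clearly implies'' the corollary, and your contrapositive application of that lemma (factoring through $\pi_1^{top}(\Gamma)\simeq\Z$, or equivalently invoking $\beta(\Gamma)\ge\rk(f(G))$) fills in precisely the details the author left implicit. Nothing to add.
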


\begin{lem} \label{modeg}
 Let $f:G\to  G'$ be an elliptic-friendly homomorphism between GBS groups, with $G$ non-elementary.
 
 If $f(G)$ is non-elementary, then    
 $\Delta_G=\Delta_{G'}\circ f$; in particular, $\Delta_G$ and $\Delta_{G'}$ have the same image in $\Q^*$ if $f$ is onto.

If $f(G)$ is elementary, $\Delta_G$ is trivial when $f(G)$ is $\Z$ or $\Z^2$, has image contained in $\{1,-1\}$ when $f(G)$ is the Klein bottle group $K$.
\end{lem}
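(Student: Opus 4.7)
The plan is to reduce everything to a single transported relation. For a given $g \in G$, choose a non-trivial elliptic $a \in G$ and, using commensurability of elliptic elements, a relation $g a^m g^{-1} = a^n$ with $m, n \neq 0$, so that $\Delta_G(g) = m/n$. Since $f$ is elliptic-friendly, $f(a) \neq 1$, and applying $f$ yields
\[
 f(g)\, f(a)^m\, f(g)^{-1} = f(a)^n
\]
in $G'$. The remainder of the proof extracts the ratio $m/n$ by reading this identity inside $f(G)$ according to the various possibilities for $f(G)$.

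When $f(G)$ is non-elementary, Lemma \ref{koc} gives that $f(a)$ is elliptic in $G'$. Combined with $f(a) \neq 1$, the transported relation is exactly the data used to define $\Delta_{G'}(f(g))$, so $\Delta_{G'}(f(g)) = m/n = \Delta_G(g)$. This proves $\Delta_G = \Delta_{G'} \circ f$, and when $f$ is surjective the equality of images is immediate.

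When $f(G)$ is $\Z$ or $\Z^2$, $f(G)$ is abelian, so the conjugation is trivial and the transported relation collapses to $f(a)^m = f(a)^n$; since $f(a)$ is a nonzero element of the torsion-free group $f(G)$, it has infinite order, forcing $m = n$ and hence $\Delta_G(g) = 1$.

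The most delicate case is $f(G) = K$, viewed as $BS(1,-1)$. I would split according to whether $f(a)$ is elliptic or hyperbolic in the Bass-Serre tree of $K$. In the elliptic (hence non-trivial) case, the definition of $\Delta_K$ applied to the transported relation gives $\Delta_K(f(g)) = m/n$; since $\Delta_K$ has image $\{1,-1\}$ (as $K = BS(1,-1)$ sends the stable letter to $-1$), we obtain $m/n \in \{1,-1\}$. In the hyperbolic case, the translation-length identity $|m|\,\ell(f(a)) = |n|\,\ell(f(a))$ noted in the preliminaries forces $|m| = |n|$, again giving $m/n \in \{1,-1\}$. The main obstacle is exactly this subcase: one must handle both branches of the elliptic/hyperbolic dichotomy in $K$'s own tree and verify that the image of $\Delta_K$ is $\{1,-1\}$; both points are short once the dichotomy is set up correctly, but they are the only place where the elementary-image hypothesis is used in a non-routine way.
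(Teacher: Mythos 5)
Your proposal is correct and follows essentially the same route as the paper: transport the defining relation $ga^mg^{-1}=a^n$ through $f$, invoke Lemma \ref{koc} to see $f(a)$ is elliptic when $f(G)$ is non-elementary, and read off $m=n$ (resp.\ $m=\pm n$) when the image is abelian (resp.\ $K$). The paper simply asserts the conclusion in the $K$ case, whereas you spell out the elliptic/hyperbolic dichotomy in $K$'s own tree; that is a harmless elaboration of the same argument.
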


See Section \ref{prel} for the definition of the modular homomorphism $\Delta_G:G\to\Q^*$.

\begin{proof} 

If $\Delta_G(g)=\frac mn$, there is a non-trivial elliptic element $a\in G$ such that $ga^m g\m=a^n$. One deduces $f(g)f(a)^m f(g)\m=f(a)^n$, with $f(a)$   non-trivial. 

This equation implies $\Delta_{G'}(f(g))=\frac mn$
if $f(G)$ is 
  non-elementary,
because   $f(a)$ is   elliptic by Lemma \ref{koc}.   
It  implies $m=n$ in $\Z$ and $\Z^2$, $m=\pm n$ in $K$. 
\end{proof}

\subsection{Epimorphisms between Baumslag-Solitar groups}
 
\begin{lem} \label {eBS} 
 
 There is an epimorphism $f:BS(m,n)\twoheadrightarrow BS(m',n')$ if and only if one of the following holds:
\begin{itemize}
 \item  $(m,n)$ is an integral multiple of $(m',n')$ or $(n',m')$;
 \item $BS(m',n')=BS(1,-1) $ is the Klein bottle group $K$, and $m=n$ with $m$ even. 
\end{itemize}
\end{lem}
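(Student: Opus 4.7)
Plan.

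For the ($\Leftarrow$) direction, I exhibit explicit surjections in each case. When $(m,n)=k(m',n')$ for some integer $k$, the assignment $a\mapsto a'$, $t\mapsto t'$ defines a homomorphism: raising the defining relation $t'(a')^{m'}(t')^{-1}=(a')^{n'}$ to the $k$-th power yields the image of $ta^mt^{-1}=a^n$, and $\langle a',t'\rangle=BS(m',n')$ gives surjectivity. The case $(m,n)=k(n',m')$ reduces to the first via the isomorphism $BS(m',n')\cong BS(n',m')$ (interchange $t'$ with $(t')^{-1}$). When $BS(m',n')=K$ and $m=n$ is even, the map $a\mapsto t'$, $t\mapsto a'$ works: the relation $ta^mt^{-1}=a^m$ becomes $a'(t')^m(a')^{-1}=(t')^m$, which holds because $(t')^m$ is central in $K$ for $m$ even.

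For the ($\Rightarrow$) direction, suppose $f\colon BS(m,n)\twoheadrightarrow BS(m',n')$. The principal case is when $BS(m',n')$ is non-elementary; then $BS(m,n)$ is also non-elementary (an elementary source $\Z^2$ yields an abelian image, and $K$ has no non-elementary GBS quotient by an abelianization check). Since $\beta(BS(m,n))=1$, Corollary~\ref{ef} makes $f$ elliptic-friendly, so Lemma~\ref{modeg} gives $\langle m/n\rangle=\langle m'/n'\rangle$ in $\Q^*$, hence $m/n=(m'/n')^{\pm1}$. After possibly swapping $(m',n')\leftrightarrow(n',m')$, assume $m/n=m'/n'=p/q$ in lowest terms, and write $(m,n)=k(p,q)$, $(m',n')=k'(p,q)$; the aim is $k'\mid k$. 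If $p\ne q$, the surjection of abelianizations $\Z\oplus\Z/|k(p-q)|\twoheadrightarrow\Z\oplus\Z/|k'(p-q)|$ restricts to a surjection on torsion parts, forcing $|k'|\mid|k|$. If $p=q$ (so $m=n$, $m'=n'$), Lemma~\ref{koc} gives $f(a)=(a')^\ell$ up to conjugation; the image of $a$ under $f^{ab}\colon\Z^2\twoheadrightarrow\Z^2$ is then $(\ell,0)$, so the determinant of $f^{ab}$ is $\pm\ell\beta=\pm1$, forcing $\ell=\pm1$; centrality of $f(a^m)=(a')^{\pm m}$ in $\langle(a')^{m'}\rangle$ then gives $m'\mid m$.

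For elementary targets, $BS(m',n')=\Z^2$ is handled by abelianization: the torsion of $BS(m,n)^{ab}$ must vanish, so $m=n$ and $(m,n)=m(1,1)$. For $BS(m',n')=K$ with source $\Z^2$ no surjection exists, and source $K$ is trivial. For $BS(m',n')=K$ with non-elementary source and $m\ne\pm n$: $\Delta_{BS(m,n)}$ has infinite cyclic image while $\Delta_K$ has image $\{\pm1\}$, so Lemma~\ref{modeg} forbids $f$ from being elliptic-friendly, and Lemma~\ref{dic} then factors $f$ through $\pi_1^{top}(\Gamma)=\Z$, producing a cyclic image, which cannot equal $K$ --- contradiction. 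The case $m=-n$ falls under condition~(1). The main technical obstacle is the remaining case $m=n$: writing $f(a)=(a')^i(t')^j$ in $K$ and using $f(a^m)\in Z(K)=\langle(t')^2\rangle$, a parity analysis on $m$ and $j$ shows that $m$ odd forces $f(a)\in Z(K)$, so $\langle f(a),f(t)\rangle$ is abelian --- contradicting that $K$ is non-abelian. Hence $m$ must be even.
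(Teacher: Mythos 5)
Your proof is correct and follows essentially the same route as the paper's: Corollary~\ref{ef} plus Lemma~\ref{modeg} pin down $m/n$ up to inversion, abelianization handles $m\neq n$, the center handles $m=n$, and your direct parity computation in $K=\Z\rtimes\Z$ is a harmless substitute for the paper's map $K\twoheadrightarrow\Z/2\Z\times\Z/2\Z$. One small caveat: your parenthetical claim that $K$ has no non-elementary GBS quotient is true but does not follow from an abelianization check alone (e.g.\ $K^{\mathrm{ab}}\cong\Z\oplus\Z/2\Z$ surjects onto $BS(2,4)^{\mathrm{ab}}$); the correct reason is that $K$ is virtually abelian, hence so is every quotient, whereas the only virtually abelian GBS groups are the elementary ones.
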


This is Proposition A.11 of \cite{So} or Theorem 5.2 of  \cite{DRT} in the non-elementary case.

\begin{proof}
Write $G=BS(m,n)=\langle a,t\mid ta^m t\m=a^n\rangle$ and $G'=BS(m',n')=\langle a',t'\mid t'a'{}^{m'} t'{}\m=a'{}^{n'}\rangle$.

The ``if'' direction is clear, noting the epimorphism from $BS(2m,2m) $ to $BS(1,-1)  $ mapping $t$ to $a'$ and $a$ to $t'$. 

For the converse, 
 we first suppose that $BS(m,n)$ and $ BS(m',n')$ are non-elementary (i.e.\ different from $\Z^2$ and $K$). If $f$ exists, it is elliptic-friendly  by Corollary \ref{ef}, 
 so $\frac mn$ equals $\frac {m'}{n'}$ or $\frac {n'}{m'}$ by Lemma \ref{modeg}. If $m\ne n$, abelianizing  shows that $m-n$ is divisible by $m'-n'$. If $m=n$, dividing by the center $\langle a^m\rangle$ shows that $m$ is divisible by $m'$. The result follows. 

The case when $G$ is elementary is easy and left to the reader (any epimorphism from $G$ to a non-cyclic GBS group is an isomorphism), so assume that $G'$ is elementary and $G$ is not. 
    If $G'=\Z^2$, Lemma \ref{modeg} implies $m=n$ as required. If $G'=K$, it implies   $m=\pm n$ and we have to rule out the possibility that $m=n$ with $m$ odd. 

  Assume $m=n$ and consider   $\chi: 
  G'\twoheadrightarrow   \Z/2\Z\times\Z/2\Z$ obtained by abelianizing and killing the image of the center $\langle t'{}^2\rangle$. Since $a^m$ is central in $BS(m,m)$, its image by $f$    is killed by $\chi$. If $m$ is odd, the element   $a$ itself is   killed by $\chi\circ f$. This is a contradiction since $\chi\circ f$ is onto, but $G/\langle\langle a \rangle\rangle\simeq\Z$ does not map onto $\Z/2\Z\times\Z/2\Z$.
\end{proof}

\subsection{Baumslag-Solitar quotients of GBS groups}

\begin{lem} \label{qbs} Let $G$ be a non-elementary GBS group, and write $\beta =\beta (G)$. 
\begin{itemize} 
\item 
If $\beta =0$, then $G$ has no quotient isomorphic to a Baumslag-Solitar group, except possibly to    $K=BS(1,-1)$.
\item If $\beta \ge1$, then $G$ has a quotient isomorphic to a Baumslag-Solitar group.   There exists an elliptic-friendly epimorphism $f$ from $G$ to a Baumslag-Solitar group   if and only if the image of $\Delta_G$ is cyclic (possibly trivial).

\end{itemize}
\end{lem}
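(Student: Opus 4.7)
For the $\beta=0$ bullet, I proceed by contradiction. Since $\Gamma$ is a tree, $\Delta_G$ is trivial and a direct rank computation on the relation matrix of $G^{ab}$ gives $\Q$-rank $1$, ruling out a surjection to $\Z^2=BS(1,1)$. If $f:G\twoheadrightarrow BS(m,n)$ with $BS(m,n)$ non-elementary, then torsion-freeness of $BS(m,n)$ and Lemma \ref{dic} force $f$ elliptic-friendly (otherwise $\beta\geq\rk(f(G))\geq 2$), and Lemma \ref{modeg} then forces $m=n$. To rule out $|m|\geq 2$, compose $f$ with $BS(m,m)\twoheadrightarrow\Z$ sending $a\mapsto 0, t\mapsto 1$: by Lemma \ref{koc}, elliptic elements of $G$ go to conjugates of $\langle a\rangle$ in $BS(m,m)$ and are then killed, so the composite $G\twoheadrightarrow\Z$ factors through $\pi_1^{top}(\Gamma)=1$, a contradiction. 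Hence $K$ is the only possible exception.

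For the $\beta\geq 1$ bullet, the forward direction of the ``if and only if'' is immediate from Lemma \ref{modeg}: an elliptic-friendly epi $f:G\twoheadrightarrow BS(m,n)$ satisfies $\Delta_G=\Delta_{BS(m,n)}\circ f$, so image $\Delta_G=\langle m/n\rangle$ is cyclic. For the existence claim: if $\beta\geq 2$, use $G\twoheadrightarrow\pi_1^{top}(\Gamma)=F_\beta\twoheadrightarrow F_2\twoheadrightarrow K$, exploiting that $K=BS(1,-1)$ is $2$-generated; if $\beta=1$, then $\pi_1^{top}(\Gamma)=\Z$ makes image $\Delta_G$ automatically cyclic, so existence reduces to the converse.

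For the converse, write image $\Delta_G=\langle m/n\rangle$ with $\gcd(m,n)=1$ (taking $m=n=1$, $BS(1,1)=\Z^2$, in the trivial case). Fix a standard presentation over a spanning tree and define $f:G\to BS(m,n)$ by sending every stable letter to $t$ and each vertex generator $a_v$ to $a^{c_v}$, where the $c_v$ form a minimal integer solution of the tree-edge constraints $c_{o(e)}\lambda_e=c_{o(\tilde e)}\lambda_{\tilde e}$. The non-tree relations then become consequences of $ta^mt\m=a^n$, since by construction $m/n$ equals the modulus of the corresponding loop. Every $c_v$ is nonzero, so no nontrivial elliptic element is killed and $f$ is elliptic-friendly; with the minimal choice, the coprimality properties of Lemma \ref{copr} give $\gcd(c_v)=1$, placing $a$ in the image and giving surjectivity onto $BS(m,n)$. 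The delicate point is producing the minimal $c_v$'s: for $2$-generated $G$, Lemma \ref{copr} supplies the needed coprimality along the segment and loop of $\Gamma$, while for general $G$ (which may have rank $>2$ even when $\beta=1$) one must instead identify $f(G)$ as a Baumslag-Solitar subgroup of $BS(m,n)$ via Bass-Serre analysis, replacing the target by an isomorphic BS group if necessary.
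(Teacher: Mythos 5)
Your $\beta=0$ argument is correct, though it takes a longer route than the paper, which simply observes that a tree presentation exhibits $G$ as generated by pairwise commensurable elements (the $a_v$'s), a property inherited by quotients and enjoyed by no Baumslag--Solitar group except $K$; your detour through the abelianization rank and the quotient $BS(m,m)\twoheadrightarrow\Z$ is sound. The forward implication of the ``if and only if'' and the $\beta\ge2$ existence statement are also fine.

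The converse --- which you also need for the existence claim when $\beta=1$ --- has a genuine gap. Sending \emph{every} stable letter to $t$ does not define a homomorphism. For a non-tree edge $\varepsilon=(e,\tilde e)$, the relation $t_\varepsilon (a_{o(e)})^{\lambda_e}t_\varepsilon\m=(a_{o(\tilde e)})^{\lambda_{\tilde e}}$ maps to $f(t_\varepsilon)a^{c_{o(e)}\lambda_e}f(t_\varepsilon)\m=a^{c_{o(\tilde e)}\lambda_{\tilde e}}$, and the tree constraints telescope to show that the ratio $c_{o(e)}\lambda_e/c_{o(\tilde e)}\lambda_{\tilde e}$ equals the modulus of the loop through $\varepsilon$, which is some power $(m/n)^{\kappa_\varepsilon}$ of the generator of the image of $\Delta_G$ --- not $m/n$ itself, contrary to your assertion that ``by construction $m/n$ equals the modulus of the corresponding loop.'' For instance, with $\Gamma$ a wedge of two loops giving $G=\langle a,t_1,t_2\mid t_1a^2t_1\m=a^3,\ t_2a^4t_2\m=a^9\rangle$, the image of $\Delta_G$ is $\langle 2/3\rangle$, but $t_2\mapsto t$ would require $ta^4t\m=a^9$ in $BS(2,3)$, i.e.\ $a^6=a^9$, which is false. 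One must send $t_\varepsilon$ to $t^{\pm\kappa_\varepsilon}$, using $m\wedge n=1$ to write $c_{o(e)}\lambda_e=\theta m^{\kappa_\varepsilon}$ and $c_{o(\tilde e)}\lambda_{\tilde e}=\theta n^{\kappa_\varepsilon}$ with $\theta$ an integer; surjectivity then comes from the fact that the loop moduli generate the image of $\Delta_G$, so $\gcd(\kappa_\varepsilon)=1$. This is exactly what the paper does, after collapsing the maximal subtree via an elliptic-friendly epimorphism $G_0\twoheadrightarrow\Z$ (Proposition 3.3 of \cite{Le}). That step also disposes of your worry about the $c_v$'s: the primitive integer solution of the tree constraints always exists, has all $c_v\ne0$ and $\gcd(c_v)=1$, and requires neither Lemma \ref{copr} nor any restriction to $2$-generated $G$; your closing appeal to ``Bass--Serre analysis, replacing the target by an isomorphic BS group if necessary'' does not constitute a proof.
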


\begin{proof}
Let  $\Gamma$ be any labelled  graph representing $G$.

 If $\beta =0$, then $G$ may be   generated by a set of elements which are all commensurable with each other, and any quotient of $G$ inherits  this property. This rules out all  Baumslag-Solitar groups except  $K=\langle u,v\mid u^2=v^2\rangle$.
 
 Now suppose  $\beta =1$. The image of $\Delta_G$ is cyclic, and we construct an elliptic-friendly epimorphism   
 as follows.  The graph $\Gamma$  contains a unique embedded circle. Choose an edge on this circle, and collapse all other edges to a point. This  expresses $G$ as an HNN extension where the base group $G_0$ is a GBS group represented by  a labelled graph which is a tree, and the amalgamated subgroups are cyclic and consist of elliptic elements of $G_0$. There is an elliptic-friendly epimorphism from $G_0$ to $\Z$  (see Proposition 3.3 of  \cite{Le}). It extends to  an elliptic-friendly epimorphism from $G$ to some $BS(m,n)$.
 
 If $\beta \ge 2$, then $G$ maps onto $F_2$ hence onto any Baumslag-Solitar group.
If there is  an elliptic-friendly epimorphism $f:G\twoheadrightarrow   BS(m,n)$, the image of $\Delta_G$ is cyclic by Lemma \ref{modeg}. 
 
 Conversely, we suppose that the image of $\Delta_G$ is cyclic, contained in the subgroup of $\Q^*$ generated by $\frac mn$ (with $m$ and $n$ coprime), and we now construct an elliptic-friendly epimorphism $f:G\twoheadrightarrow   BS(m,n)$ as in the case $\beta =1$.   We choose a maximal subtree $\Gamma_0\inc \Gamma$ and we map the GBS group $G_0$ represented by  $\Gamma_0$ to $\Z$ without killing the elliptic elements. This yields a quotient $G'$ of $G$ with presentation $\langle a,t_1,\dots,t_\beta \mid t_ia^{m_i}t_i\m=a^{n_i}\rangle$. Each $\frac{m_i}{n_i}$ is in the image of $\Delta_G$, so 
 we may write $(m_i,n_i)=(\theta_im^{\kappa_i}, \theta_in^{\kappa_i})$ or $(m_i,n_i)=(\theta_in^{\kappa_i}, \theta_im^{\kappa_i})$ 
 with integers 
   $\theta_i\ne0$ and $\kappa_i\ge0$.  We now map $G'$ to $BS(m,n)=\langle a,t\mid ta^mt\m=a^n\rangle$ by sending $a$ to $a$ and $t_i$ to $t^{\pm\kappa_i}$. This is onto because the image of $\Delta_G$ in $\Q^*$ is generated by the numbers $\frac{m_i}{n_i}$.
 \end{proof}
 
 \begin{example}
  Some GBS  groups with $\beta =0$ map onto $K$, some do not. For instance, let us show that \emph{$G=\langle a,b\mid a^m=b^n\rangle$ maps onto $K$ if and only if $m$ and $n$ are even and divisible by the same powers of 2 (i.e.\ $m=2\lambda r$ and $n=2\lambda s$ with $r,s$ odd and coprime)}. 
  
  We   view $K$ as $\langle u,v\mid u^2=v^2\rangle$. If $m=2\lambda r$ and $n=2\lambda s$ as above, we map $G$ to $K$ by sending $a$ to $u^s$ and $b$ to $v^r$ (to see that $u^s$ and $v^r$ generate $K$, write $x r+y s=1$ and observe that $x$ or $y$ must be even).
  
  Conversely, suppose $f$ is an epimorphism from $G$  to $K$. Since $a^m$ is central, $f(a^m)$ is a power of $u^2$, so $f(a)$ is conjugate to a power of $u$ or $v$, and similarly for $f(b)$. Abelianizing shows that $f(a)$ is conjugate to an odd power of $u$, and $f(b)$ to an odd power of $v$ (up to swapping $u$ and $v$). In particular, $m$ and $n$ are even. The relation $a^m=b^n$ forces $m$ and $n$ to be divisible by the same powers of 2.
  \end{example}   
  
 \begin{example}
 It is not true that every GBS group with $\beta =1$ maps onto a Baumslag-Solitar group different from $K$. For instance, $K$ is the only Baumslag-Solitar quotient of 
 $G=\langle  a,b,t\mid a^2=b^2, tbt\m=b\m \rangle$: if $G$ maps onto $BS(m,n)$,    Lemma \ref{modeg} implies $m=-n$;   abelianizing shows $m=\pm1$.
  \end{example}
  
 \subsection{Descending chains}       

Since Baumslag-Solitar groups may fail to be Hopfian, they do not satisfy the descending chain condition: there   exists an infinite sequence of non-injective epimorphisms $f_n:G_n\twoheadrightarrow  G_{n+1}$. On the other hand, Baumslag-Solitar groups satisfy the descending chain condition  if we require $G_n$ and $G_{n+1}$ to be non-isomorphic (see Lemma \ref{eBS}). 

Among GBS groups, the descending chain condition fails in the strongest possible way. 
\begin{prop}
There exists a sequence $(f_n) _{n\ge1}$ of epimorphisms $f_n:G_n\twoheadrightarrow  G_{n+1}$  such that:
\begin{itemize}
\item each $G_n$ is a GBS group which is a quotient of $BS(18,36)$ and maps onto $BS(9,18)$;
\item there is no epimorphism $G_{n+1}\twoheadrightarrow  G_n$.
\end{itemize}
\end{prop}
In particular,  $G_n$ and $G_{n+1}$ are  not isomorphic, and not even  epi-equivalent.

\begin{proof}
Let  $G_n=\langle a,b,t\mid a^6=b^{2^n}, tb^{3}t\m=b^6\rangle$   (it is represented by a lollipop as on Figure \ref{ptit} with $Q=6$). It is generated by $a$ and $t$, which satisfy $ta^{18}t\m=a^{36}$, so is a quotient of $BS(18,36)$.  
Constructing epimorphisms from $G_n$ to $G_{n+1}$, and from $G_n$ to $BS(9,18)$,  is easy and left to the reader. We assume that there is an epimorphism $f:G_{n+1}\twoheadrightarrow  G_n$, and we argue towards a contradiction.

We denote by $a',b',t'$ the generators of $G_{n+1}$.  
Since $(f(a'),f(t'))$ is a generating pair of  $G_n$ with $f(a')$ elliptic, Lemma \ref{lesp} and Remark \ref{rr} imply that $f(a')$ 
 is (conjugate to) $ a^r$ with $r$ coprime with 6. Since $(a'){}^6=(b')^{2^{n+1}}$, the element $c= f((a')^6)=a^{6r}=b^{2^nr}$ has a $2^{n+1}$-th root $u$ in $G_n$. The group $A=\langle a,b\rangle$ is a vertex group in  a (non-GBS) splitting of $G_n$, with incident edge groups conjugate to $\langle b^3\rangle$ and $\langle  b^6\rangle$. Since $c=b^{2^nr}$ is central in $A$, and $r$ is not divisible by 3, $c$ fixes a unique point in the Bass-Serre tree of this splitting, so $u\in A$. 
Now consider the epimorphism from $A$ to $\Z$ mapping $a$ to $2^{n-1}$ and $b$ to 3. The image of $c$ is $3\,r\,2^n$, so $c$ does not have a $2^{n+1}$-th root   in $A$ since $r$ is odd. 
We have reached the desired  contradiction.
\end{proof}

\section{Maps from Baumslag-Solitar groups}      \label{sens1}   

We have seen (Lemma \ref {qdeBS}) that any 2-generated GBS group $G$ is a quotient of some $BS(m,n)$.
In this section we fix $G$ and we determine for which values of $m$ and $n$ this happens.

\begin{thm}\label{deBS}
 Let $G$ be a     2-generated GBS group, represented by  a reduced labelled graph $\Gamma$ as on Figure \ref{disp}. Assume that $G$ is not cyclic or the Klein bottle group $K$.
 \begin{itemize}
 \item If $\Gamma$ is a segment, then $G$ is a quotient of $BS(m,n)$ if and only if $m=n$, and $m$ is divisible by $Q$ or $R$. 
 \item    If $\Gamma$ is  a lollipop (possibly a circle), then $G$ is a quotient of $BS(m,n)$ if and only if $(m,n)$ is an integral multiple of $(QX,QY)$ or $(QY,QX)$.
\end{itemize}
\end{thm}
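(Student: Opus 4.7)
The plan is to treat each topological type of $\Gamma$ separately, exhibiting an epimorphism in the sufficiency direction and analysing an arbitrary one in the necessity direction using the tools of Sections \ref{prel}--\ref{gene}. For sufficiency in the segment case with $m=n$ and $Q\mid m$, I will send $a\mapsto a_0$ and $t\mapsto a_k$: since $a_0^m=(a_0^Q)^{m/Q}=(a_k^R)^{m/Q}$ lies in $\langle a_k\rangle$, the element $a_k$ commutes with $a_0^m$, and $(a_0,a_k)$ generates $G$ by Section \ref{2g}. The case $R\mid m$ is symmetric. For sufficiency in the lollipop/circle case with $(m,n)=k(QX,QY)$, I will send $a\mapsto a_0$ (or $b_0$ in the circle case) and $t\mapsto\tau$, and use $a_0^Q=b_0^R$ together with the HNN relation $\tau b_0^X\tau^{-1}=b_0^Y$ to compute
\[
\tau\, a_0^{kQX}\,\tau^{-1}=\tau\, b_0^{kRX}\,\tau^{-1}=(\tau b_0^X\tau^{-1})^{kR}=b_0^{kRY}=a_0^{kQY}.
\]
The case $(QY,QX)$ is handled by using $\tau^{-1}$ instead.

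For necessity, let $f\colon BS(m,n)\twoheadrightarrow G$. Since $G$ is non-cyclic and $\beta(BS(m,n))=1$, Corollary \ref{ef} gives that $f$ is elliptic-friendly, and Lemma \ref{modeg} yields $\Delta_G(f(t))=m/n$. Lemma \ref{lesp} together with Remark \ref{rr} lets me take, up to conjugation and possibly swapping the two terminal vertices, $f(a)=a_0^{r_0}$ with $r_0$ coprime to $Q$ in the segment case and to $QX\wedge QY=Q(X\wedge Y)$ in the lollipop/circle case. In the segment case $\Gamma$ is a tree, so $\Delta_G$ is trivial, forcing $m=n$; the BS relation then says $f(a)^m$ commutes with $f(t)$, and hence lies in $Z(G)$. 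The centre of a non-elementary segment GBS group consists of the elements fixing the Bass-Serre tree pointwise, namely $\langle a_0^Q\rangle$, so $Q\mid r_0 m$ and coprimality forces $Q\mid m$.

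In the lollipop/circle case, the quotient $G\to\pi_1^{\mathrm{top}}(\Gamma)\cong\Z$ kills all elliptic elements and sends $\tau$ to a generator. Since $f(a)$ is elliptic and $(f(a),f(t))$ generates $G$, the image of $f(t)$ in $\Z$ must be a generator, so $f(t)=\tau^{\pm1}u$ with $u$ in the subgroup generated by elliptic elements. Applying $\Delta_G$ gives $m/n=\Delta_G(\tau)^{\pm1}=(X/Y)^{\pm1}$; up to swapping $m$ and $n$ I may assume $m/n=X/Y$. It remains to upgrade this to $(m,n)=k(QX,QY)$, and for this I plan to analyse the equation $\tau u\cdot a_0^{r_0 m}\cdot u^{-1}\tau^{-1}=a_0^{r_0 n}$ via the fixed subtrees $F_m$ and $F_n$ of $a_0^{r_0 m}$ and $a_0^{r_0 n}$ in the Bass-Serre tree. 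The relation forces $\tau u\cdot F_m=F_n$; since $r_0$ is coprime to $Q(X\wedge Y)$, the shape of $F_m$ (how far it extends along the arc, and around the loop) is determined by the divisibility of $m$ by the successive partial products $q_0,q_0q_1,\dots,Q$ and then by the loop labels $x_0,y_1,\dots$. Matching these shapes under the one-edge translation by $\tau$ along its axis will successively force $Q\mid m$ and $Q\mid n$, then $X\mid m/Q$ and $Y\mid n/Q$, yielding $(m,n)=k(QX,QY)$ with $k=m/(QX)=n/(QY)$.

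The hard part will be this last step: converting the geometric condition $\tau u\cdot F_m=F_n$ into the arithmetic statement $QX\mid m$. Particular care is required for primes simultaneously dividing $Q$ and $X$, and for primes dividing only one of $X,Y$; the coprimality constraints of Lemma \ref{copr} will be essential. The segment case is comparatively short because once the centre of $G$ has been identified, the conclusion is almost immediate.
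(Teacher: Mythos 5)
Your sufficiency arguments and your segment-case necessity coincide with the paper's proof: the same generating pairs $(a_0,a_k)$ and $(a_0,\tau)$, the same computation $\tau a_0^{kQX}\tau\m=a_0^{kQY}$, and the same reduction of the segment case to the centre. (One small point there: identifying $Z(G)$ with $\langle a_0^Q\rangle$, rather than with $\langle a_0^N\rangle$ for a proper divisor $N$ of $Q$, already requires the coprimality of the $q_j$ and $r_i$ from Lemma \ref{copr} fed into Lemma \ref{wt2}; this should be said explicitly.)

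The genuine gap is in the lollipop/circle necessity, precisely the step you defer. Two ingredients are missing. First, your claim that the shape of $F_m=\mathrm{Fix}(a_0^{r_0m})$ is ``determined by the divisibility of $m$ by the successive partial products $q_0,q_0q_1,\dots,Q$ and then by the loop labels'' is not correct in general: the index of $\langle a_0^{r_0}\rangle\cap\langle a_j\rangle$ in $\langle a_0^{r_0}\rangle$ equals the partial product of labels only under coprimality hypotheses (assertion 2 of Lemma \ref{wt2}), and along the path going around the lollipop the relevant coprimality --- between the $r_i$ and the $x_j$ --- can genuinely fail; Lemma \ref{copr} says nothing about those pairs. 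This is exactly why the paper argues prime by prime with the weaker assertion 3 of Lemma \ref{wt2}, uses the symmetry of the statement to reduce to primes dividing both $X$ and $Y$, and then invokes the last clause of Lemma \ref{copr} (a prime dividing $R$ divides $X$ or $Y$ but not both) to restore enough coprimality. Second, before you can read the labels $q_0,\dots,q_{k-1},x_0,\dots$ off the segment joining $u$ to a point $u'$ of $F_m$ lying over $f(t\m)\varphi(u)$, you must show that this segment projects to an immersed loop in $\Gamma$ (once around the circle and back and forth along the arc); the paper proves this by choosing $u'$ at minimal distance from $u$ and establishing local injectivity via a commutation argument with a vertex stabilizer. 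Without such a claim, the relation $f(t)F_m=F_n$ gives no control over which labels the comparison actually sees. These two points are where essentially all the work in the paper's proof of the lollipop case lies, so as it stands the proposal is a correct framework with the core of the argument still to be supplied.
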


In the lollipop case, there is a unique ``smallest''  Baumslag-Solitar group mapping onto $G$, namely $BS(QX,QY)$. In the segment case, there are in general two   groups,   $BS(Q,Q)$ and $BS(R,R)$.

We establish a   lemma before starting the proof.

\begin{lem}\label{wt2}

 Let $\Gamma$ be a   labelled graph homeomorphic to  a segment, as on Figure \ref{disp}, with associated group $G=\langle a_0,\dots, a_k\mid  (a_i)^{q_i}=(a_{i+1})^{r_{i+1}}, i=0,\dots,k-1\rangle$. Given $r_0\ne0$, let $(a_0)^{r_0N}$be a generator of 
$ \langle (a_0)^{r_0}\rangle\cap\langle a_k\rangle$. Then:

 \begin{enumerate}
 \item $(a_0)^{q_0\cdots q_{j-1} }=(a_j)^{r_1\cdots r_{j} }$ for $j=1,\dots, k$. In particular,  $N$ divides $q_0\cdots q_{k-1}$. 
 \item  If 
  $q_j$ and $r_i$ are coprime whenever $0\le i\le j\le k-1$,  
   then $N=\pm q_0\cdots q_{k-1}$.
  
 \item 
  If  $p$ is a prime which divides no $ q_j\wedge r_i $ with   $0\le i\le j\le k-1$, then $N=q_0\cdots q_{k-1}/  \theta$ with $ \theta$ not divisible by $p$.
  \item Let $p$ be prime, $\alpha\ge0$, and $r_0=1$. Suppose that each $q_j$ for $0\le j\le k-1$ and each $r_i$ for $1\le i\le k$ is divisible by $p^\alpha$ but not by $p^{\alpha+1}$. Then $N$ is   divisible by $p^\alpha$ but not by $p^{\alpha+1}$.
  
\end{enumerate}
\end{lem}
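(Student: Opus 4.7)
The plan is to handle $(1)$ by a direct induction along the segment, then treat $(2)$--$(4)$ uniformly by computing the cyclic intersections $\langle a_0\rangle\cap\langle a_j\rangle$ iteratively as $j$ runs from $1$ to $k$.

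For $(1)$, I would prove the identity $(a_0)^{q_0\cdots q_{j-1}}=(a_j)^{r_1\cdots r_j}$ by induction on $j\ge1$: raising the inductive hypothesis to the $q_j$-th power and substituting $(a_j)^{q_j}=(a_{j+1})^{r_{j+1}}$ gives the identity at level $j+1$. At $j=k$ this shows $(a_0)^{q_0\cdots q_{k-1}}\in\langle a_k\rangle$, so if $\langle a_0\rangle\cap\langle a_k\rangle=\langle(a_0)^{M}\rangle$ with $M>0$, then $M\mid q_0\cdots q_{k-1}$. Since the intersection of two cyclic subgroups of $\langle a_0\rangle$ gives $\langle(a_0)^{r_0}\rangle\cap\langle a_k\rangle=\langle(a_0)^{\mathrm{lcm}(r_0,M)}\rangle$, we have $N=M/(r_0\wedge M)$, which divides $M$ and hence $q_0\cdots q_{k-1}$.

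For $(2)$--$(4)$, set $M_j,L_j>0$ by $\langle a_0\rangle\cap\langle a_j\rangle=\langle(a_0)^{M_j}\rangle=\langle(a_j)^{L_j}\rangle$. Since $G$ is the iterated amalgam along the segment, Bass-Serre theory (or Britton's normal form) identifies $\langle a_{j-1}\rangle\cap\langle a_j\rangle$ with the edge group $\langle(a_{j-1})^{q_{j-1}}\rangle=\langle(a_j)^{r_j}\rangle$. Then $(a_0)^{M_{j-1}t}=(a_{j-1})^{L_{j-1}t}$ lies in $\langle a_j\rangle$ iff $q_{j-1}\mid L_{j-1}t$, so the minimal positive such $t$ is $q_{j-1}/(q_{j-1}\wedge L_{j-1})$. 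This yields the recursion
\begin{equation*}
M_j=\frac{M_{j-1}\,q_{j-1}}{q_{j-1}\wedge L_{j-1}},\qquad L_j=\frac{L_{j-1}\,r_j}{q_{j-1}\wedge L_{j-1}},
\end{equation*}
with $M_1=q_0$, $L_1=r_1$, and finally $N=M_k/(r_0\wedge M_k)$.

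The rest is valuation bookkeeping along this recursion. An easy induction shows $L_{j-1}$ divides $r_1\cdots r_{j-1}$. For $(2)$, the coprimality hypothesis then forces every gcd in the recursion to equal $1$, giving $M_k=q_0\cdots q_{k-1}$, and the case $i=0$ of the hypothesis makes $r_0\wedge M_k=1$, whence $N=\pm q_0\cdots q_{k-1}$. For $(3)$, the same divisibility combined with the hypothesis gives $v_p(q_{j-1}\wedge L_{j-1})=0$ at every step (either $p\nmid q_{j-1}$, or $p\nmid r_i$ for all $i\le j-1$ so that $v_p(L_{j-1})=0$), whence $v_p(M_k)=v_p(q_0\cdots q_{k-1})$; the case $i=0$ handles the $\gcd$ with $r_0$ analogously. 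For $(4)$, starting from $v_p(M_1)=v_p(L_1)=\alpha$, the equality $v_p(q_{j-1}\wedge L_{j-1})=\alpha$ is preserved at each step, so $v_p(M_j)=v_p(L_j)=\alpha$ throughout, and $r_0=1$ gives $v_p(N)=\alpha$. The main obstacle is the identification $\langle a_{j-1}\rangle\cap\langle a_j\rangle=\langle(a_{j-1})^{q_{j-1}}\rangle$ inside $G$, i.e.\ the fact that adjacent vertex groups intersect exactly in the edge group; once this Bass-Serre input is granted everything else is a linear arithmetic walk along the segment.
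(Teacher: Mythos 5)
Your proof is correct and follows essentially the same route as the paper: both compute the cyclic intersections $\langle a_0\rangle\cap\langle a_j\rangle$ iteratively along the segment via the formula $\langle a^u\rangle\cap\langle a^v\rangle=\langle a^{uv/(u\wedge v)}\rangle$, using the Bass--Serre fact that adjacent vertex groups meet in the edge group, and then track $p$-valuations through the resulting gcd recursion. The only (cosmetic) difference is that the paper carries $r_0$ through the recursion from the start, working with $\langle (a_0)^{r_0}\rangle\cap\langle a_j\rangle$, whereas you intersect with $\langle (a_0)^{r_0}\rangle$ only at the final step.
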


The fourth assertion will be used only in Subsection \ref{ebd}.

\begin{rem*} $\langle a_0 \rangle\cap\langle a_k\rangle$ is the center of $G$.
\end{rem*}

\begin{proof}
 For the first assertion, we simply 
 use the relations $(a_i)^{q_i}=(a_{i+1})^{r_{i+1}}$.  We show   3 (which clearly implies  2)  by induction on $k$. 
 Let $\theta_j\in\Z$ be such that $ \langle (a_0)^{r_0}\rangle\cap\langle a_j\rangle$ is generated by $(a_0)^{r_0q_0\cdots q_{j-1}/\theta_j}=(a_j)^{r_0r_1\cdots r_{j} /\theta_j}$, with $\theta_0=1$. We must show that $\theta_k$ is not divisible by $p$.

 Using $  \langle (a_0)^{r_0}\rangle\cap\langle a_k\rangle=\langle (a_0)^{r_0}\rangle\cap   \langle  (a_{k-1})^{q_{k-1}}\rangle$, and the formula $m\Z\cap n\Z=\frac{mn}{ m\wedge n }\Z$,
 we    
  write
$$ \langle (a_0)^{r_0}\rangle\cap\langle a_k\rangle=\langle (a_0)^{r_0}\rangle\cap \langle  a_{k-1} \rangle\cap \langle  (a_{k-1})^{q_{k-1}}\rangle
=\langle  (a_{k-1})^{r_0\cdots r_{k-1}/\theta_{k-1}}\rangle \cap\langle  (a_{k-1})^{q_{k-1}}\rangle$$
$$
=\langle  (a_{k-1})^{\frac{r_0\cdots r_{k-1}q_{k-1}}{\theta_{k-1} \theta'}}\rangle
=\langle  (a_{k})^{r_0\cdots r_{k-1} r_{k}/\theta_{k-1}\theta'}\rangle
$$ 
with $\theta'$ the gcd of $r_0\cdots r_{k-1} /\theta_{k-1}$ and $q_{k-1}$.

If $p$ is as in Assertion 3, then $\theta_{k-1}$ is not divisible by $p$ by induction, and neither is $\theta'$, 
so $\theta_k=\pm\theta_{k-1}\theta'$ is not divisible by $p$.

 Assertion 4 is equivalent to saying that the largest power of $p$ dividing $\theta_k$ is $p^{(k-1)\alpha}$. We prove this by induction on $k$, noting that $\theta_1=1$. By induction the largest power of $p$ dividing  $r_0\cdots r_{k-1} /\theta_{k-1}$   is $p^\alpha$, so the same is true for $\theta'$ and the result follows since  $\theta_k=\pm\theta_{k-1}\theta'$.
\end{proof}

The same proof shows:

\begin{lem}\label{wt}

Let $v_0,\dots,v_k$ be a segment of length $k$ in the Bass-Serre tree $T$ associated to some labelled graph. Let $a_i$ be a generator of  the stabilizer of $v_i$, let $c_i$ be a generator of  the stabilizer of the edge $v_iv_{i+1}$, and define $q_0,\dots, q_{k-1},r_1,\dots r_k$, by $c_i=(a_i)^{q_i}=(a_{i+1})^{r_{i+1}}$. 
Given $r_0\ne0$, 
 the   conclusions of Lemma \ref{wt} hold for the index $N$ of $ \langle (a_0)^{r_0}\rangle\cap\langle a_k\rangle$
in $ \langle (a_0)^{r_0}\rangle $. \qed
\end{lem}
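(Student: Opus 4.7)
The plan is to run exactly the same computation as in the proof of Lemma~\ref{wt2}, once we note that all the relevant cyclic intersections can be carried out inside vertex stabilizers by general Bass-Serre theory. The crucial observation is the \emph{path-stabilizer fact}: an element of $G$ fixing both $v_0$ and $v_k$ fixes the whole geodesic $v_0v_1\cdots v_k$ in $T$, and therefore lies in every $\langle a_i\rangle$. Applied to elements of $\langle(a_0)^{r_0}\rangle\cap\langle a_k\rangle\subseteq\langle a_0\rangle\cap\langle a_k\rangle$, this yields $\langle(a_0)^{r_0}\rangle\cap\langle a_k\rangle\subseteq\langle a_i\rangle$ for every $i$. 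The same observation applied to an edge gives $\langle a_i\rangle\cap\langle a_{i+1}\rangle=\langle c_i\rangle$, and by the definition of $q_i,r_{i+1}$ this equals $\langle(a_i)^{q_i}\rangle=\langle(a_{i+1})^{r_{i+1}}\rangle$.

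Once these are in place, the first assertion $(a_0)^{q_0\cdots q_{j-1}}=(a_j)^{r_1\cdots r_j}$ follows by induction on $j$ from the relations $c_i=(a_i)^{q_i}=(a_{i+1})^{r_{i+1}}$, and $N$ divides $q_0\cdots q_{k-1}$. I would then mimic the proof of Lemma~\ref{wt2} by introducing integers $\theta_j$ defined by
\[\langle(a_0)^{r_0}\rangle\cap\langle a_j\rangle=\langle(a_0)^{r_0q_0\cdots q_{j-1}/\theta_j}\rangle=\langle(a_j)^{r_0r_1\cdots r_j/\theta_j}\rangle,\]
with $\theta_0=1$, and deriving the recursion
\[\langle(a_0)^{r_0}\rangle\cap\langle a_k\rangle=\langle(a_0)^{r_0}\rangle\cap\langle a_{k-1}\rangle\cap\langle(a_{k-1})^{q_{k-1}}\rangle=\langle(a_{k-1})^{r_0\cdots r_{k-1}/\theta_{k-1}}\rangle\cap\langle(a_{k-1})^{q_{k-1}}\rangle.\]
The first equality rests on the path-stabilizer inclusion $\langle(a_0)^{r_0}\rangle\cap\langle a_k\rangle\subseteq\langle a_{k-1}\rangle$ together with $\langle a_{k-1}\rangle\cap\langle a_k\rangle=\langle(a_{k-1})^{q_{k-1}}\rangle$; the second is the inductive form of the definition of $\theta_{k-1}$. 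Applying $m\Z\cap n\Z=\frac{mn}{m\wedge n}\Z$ inside the cyclic group $\langle a_{k-1}\rangle$ then yields $\theta_k=\pm\theta_{k-1}\theta'$ with $\theta'=q_{k-1}\wedge(r_0\cdots r_{k-1}/\theta_{k-1})$.

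From this recursion, assertions (2)--(4) of Lemma~\ref{wt2} transfer word for word. For (3), neither $\theta_{k-1}$ (by induction) nor $\theta'$ is divisible by $p$, hence neither is $\theta_k$. For (4), with $r_0=1$ the $p$-adic valuation of $r_0\cdots r_{k-1}/\theta_{k-1}$ is $(k-1)\alpha-(k-2)\alpha=\alpha$, so that of $\theta'$ is $\alpha$, that of $\theta_k$ is $(k-1)\alpha$, and that of $N=q_0\cdots q_{k-1}/\theta_k$ is exactly $\alpha$. I do not anticipate a substantive obstacle: the whole content of the lemma is the observation that the path-stabilizer argument legitimizes the same cyclic-intersection manipulations that drive the proof of Lemma~\ref{wt2}.
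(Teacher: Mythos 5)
Your proposal is correct and is essentially the paper's own argument: the paper proves Lemma \ref{wt} simply by remarking that ``the same proof'' as for Lemma \ref{wt2} applies, and your path-stabilizer observation (an element fixing $v_0$ and $v_k$ fixes the whole geodesic, and $\langle a_i\rangle\cap\langle a_{i+1}\rangle=\langle c_i\rangle$) is precisely the justification left implicit there. The recursion for $\theta_k$ and the valuation count for assertion 4 match the paper's computation exactly.
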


We can now prove the main result of this section.

\begin{proof}[Proof of Theorem \ref{deBS}] 
We assume that $G$ is not a Baumslag-Solitar group (the theorem follows from Lemma \ref{eBS} if it is). In particular, $G$ is non-elementary, and so is any $BS(m,n)$ which maps onto $G$.
We fix a presentation $\langle a,t\mid ta^mt\m=a^n\rangle$ for $BS(m,n)$.

$\bullet$ First suppose that $\Gamma$ is  a segment. Since $G$ is 2-generated, it is generated by $a_0$ and $a_k$. These elements satisfy $(a_0)^Q=(a_k)^R$ by Lemma \ref{wt2}, so $G$ is a quotient of $BS(\alpha Q,\alpha Q)$ and $BS(\alpha R,\alpha R)$ for any integer $\alpha$: send $a$ to $a_0$ and $t$ to $a_k$, or $a$ to $a_k$ and $t$ to $a_0$.

Conversely, suppose that $f$ is an epimorphism from $BS(m,n) $ to $G$.  It is elliptic-friendly  by Corollary \ref{ef}, so $m=n$ by Lemma \ref{modeg} since $G$ has trivial modulus.  
The element $f(a)$ is elliptic,   
so by Lemma \ref{lesp} we may assume (by composing $f$ with an inner automorphism of $G$)  that $f(a) $ is a power 
of $a_0$ or $a_k$, say $f(a)=(a_0)^{r_0}$ with $r_0\ne0$.  In this case we show that $m$ is a multiple of $Q$ (it is a multiple of $R$ if $f(a)$ is a power of $a_k$).

We will apply Assertion 2 of Lemma \ref{wt2}, so
we  now check that  $q_j$ and $r_i$ are coprime for $0\le i\le j\le k-1$:  
this follows from Lemma \ref{copr} for $i>0$  (note that $r_0$ is not a label of $\Gamma$), 
and from Remark \ref{rr} for $i=0$.

The element $a^m$ is central in $BS(m,m)$, so $(a_0)^{r_0m}$ is central in $G$. In particular, it must be a power of $a_k$ (since $ | r_k | >1$, the centralizer of $a_k$ is $\langle a_k\rangle$). 
Thus $(a_0)^{r_0m}$ belongs to 
 $ \langle (a_0)^{r_0}\rangle\cap\langle a_k\rangle$, which  is generated by $(a_0)^{r_0Q}$ by Lemma \ref{wt2}, so $m$ is a multiple of $Q$.

$\bullet$ We now suppose that $\Gamma$ is a lollipop. We use the standard presentation   $G=\langle a_0,\dots, a_{k-1}, b_0,b_1,\dots, b_{\ell-1},\tau\rangle$
described in Subsection \ref{2g}.
Since $G$ is 2-generated, it is generated by $a_0$ and $\tau$
 (by $b_0$ and $\tau$ if $\Gamma$ is a circle). To unify notation, we will   write $g_0$ for $a_0$ if $\Gamma$ is not a circle,  for $b_0$ if $\Gamma$ is a circle, and we denote by $z_0$   the associated vertex $v_0$ or $w_0$.  

 The first assertion of Lemma \ref{wt} yields $(a_0)^Q=(b_0)^R$ and $\tau(b_0)^X\tau\m=(b_0)^Y$, so that $\tau(g_0)^{QX}\tau\m=(g_0)^{QY}$. It follows that $G$ is a quotient of $BS(QX,QY)$.
 
 Conversely, consider an epimorphism $f:BS(m,n)\twoheadrightarrow  G$.  It is elliptic-friendly  by Corollary \ref{ef}, and  $\frac mn$ equals $\frac XY$ or $\frac YX$ by Lemma \ref{modeg}. Also note that $(f(a),f(t))$ is a   generating pair  of $G$, with $f(a)$ elliptic.
 
By Lemma \ref{lesp}, the element $f(a)$ belongs, up to conjugacy, to the group carried by  a vertex belonging to every plateau,  so we may assume (by composing with an inner automorphism, and renumbering vertices if $\Gamma$ is a circle) that 
  $f(a)=(g_0)^{r_0}$ with $r_0\ne0$ (as above, beware that $r_0$ is not a label of $\Gamma$). 
  By Remark \ref{rr}, $r_0$ and $QX\wedge QY$ are coprime. 
 
  Killing all elliptic elements defines an infinite cyclic quotient $Z$ of $G$, which is the topological fundamental group of $\Gamma$.  The elements $a_i,b_j,f(a)$ vanish in $Z$, while $\tau$ and $f(t)$ map to generators. We may assume that $\tau$ and $f(t)$ map to the same generator. This implies $\frac mn=\frac XY$. We have to show that $(m,n)$ is an integral multiple of  $(QX,QY)$.

 We   consider the Bass-Serre tree $T$ of $\Gamma$. We also consider the topological universal covering $\tilde\Gamma$, a line with segments of length $k$ attached (see Figure \ref{arbs}). The group $G$ acts on $\tilde \Gamma$ through $Z$. There is a natural equivariant map $\varphi:T\to \tilde\Gamma$ sending   vertex to vertex and edge to   edge; composing $\varphi$ with the covering map $\tilde\Gamma\to\Gamma$  yields the quotient map $\pi:T\to\Gamma=T/G$.
 
 \begin{figure}[h]
\begin{center}
\begin{pspicture}(100,310)

\rput(0,230){

\psline(-150,0)(-70,0)

\psline(130,-120)(190,-120)
\pscircle(220,-120){30}
\psline (-70,0)(-70,-120)
\psline(-150,-120)(-70,-120)

\rput(10,0){
\psline(60,20)(60,-140)
\psline(-10,-120)(60,-120)
\psline(-10,0)(60,0)
\psline[linestyle=dashed](60,20)(60,60)
\psline[linestyle=dashed](60,-140)(60,-180)

  \pscircle*( -10,0) {2}
   \pscircle*(-10,-120) {2}
   
   \rput(-10,-10) {$\varphi( u)$}  
     \rput(-10,-110) {$\varphi( u')$}    
}

 \pscircle*( -150,0) {2}
  \pscircle*( -70,0) {2}
   \pscircle*(-70,-120) {2}
    \pscircle*( -115,-120) {2}
     \pscircle*( -150,-120) {2}
\pscircle*( -150,-120) {2}
\psline( -105,-120)(-105,-145)
\pscircle*(-105,-145) {2}
   \pscircle*(-70,-35) {2}
   \pscircle*(-70,-85) {2}
   \pscircle*(-120,0) {2}
   \pscircle*(130,-120){2}

\rput(-150,-10) {$ u$}

   \rput(-150,-110) {$ u'$}  
     \rput(-115,-110) {$ u''$}  
       \rput(-65,-128) {$ \ov u$}  
     \rput(-95,-145) {$gu'$}  
      \rput(131,-132) {$ \pi(u)$}     
  
  {\blue
  
\rput(-143,8) {$ q_0$}  
    \rput(-125,8) {$ r_1$}
    \rput(-77,8) {$ r_k$}
     \rput(-60,-10) {$ x_0$}  
     \rput(-60,-110) {$  y_{\ell}$}  
     \rput(-60,-28) {$  y_{ 1}$}     
          \rput(-60,-43) {$  x_{ 1}$}  
          \rput(-55,-93) {$  x_{\ell-1}$}  
     
  }

 \rput(190,-220) {$ \Gamma$}   
 \rput(40,-220) {$\tilde\Gamma$}   
 \rput(- 110,-220) {the segment $uu'\inc T $}

}

        \end{pspicture}
\end{center}
\caption{}
\label{arbs}
\end{figure}

Since $G$ is not a solvable Baumslag-Solitar group, no label near the vertex $z_0$ of $\Gamma$ carrying $g_0$ equals $\pm1$, so $g_0$ fixes a 
  unique point $u$ in  $T$.  This point is fixed by $f(a)=(g_0)^{r_0}$. 
  We have $ t\m a^n =a^mt\m$, so $f(a^m)=(g_0)^{mr_0}$ fixes a point $u'\in T$ such that $\varphi(u')=f(t\m)\varphi(u)$, for instance $f(t\m)u$. Among all such points $u'$, we choose one at minimal distance from $u$;
  note that $u'$ necessarily belongs to the $G$-orbit of $u$. 
    We claim that $\varphi$ is then injective on the segment $uu'$ (i.e.\  the map from $uu'$ to $\Gamma$ is locally injective).

If not, there is a point $u''\in T$ between $u$ and $u'$ such that the initial edges $e$ and $e'$ of $u''u$ and $u''u'$ have the same projection in $\Gamma$, so some $g\in G$ in the stabilizer of $u''$ maps $e'$ to $e$. Now consider $gu'$. It is closer to $u$ than $u'$, and it has the same image as $u'$ in $\tilde \Gamma$  since $g$ acts as the identity on $\tilde\Gamma$ (it is elliptic, so vanishes in $Z$). The element $(g_0)^{mr_0}$ fixes $u$ and $u'$, hence $u''$, so it commutes with $g$ and therefore fixes $gu'$. This contradicts the choice of $u'$ and proves the injectivity claim.

Since  $\varphi(u')=f(t\m)\varphi(u)=\tau\m\varphi(u)$, the projection of $uu'$ to $\Gamma$   is a loop going once around the circle, so it is the immersed path  of length $\ell+2k$ $$v_0, \dots,v_{k-1},v_k=w_0,\dots,w_{\ell-1}, w_\ell=w_0=v_k,v_{k-1}, \dots, v_0.$$ We wish to apply Lemma \ref{wt} to the segment $uu'$, knowing  that $g_0$ generates the stabilizer of $u$  and $(g_0)^{mr_0} $ fixes the whole segment. Recall that we want $(m,n)$ to be  a multiple of $(QX,QY)$.
 
First suppose that  $\Gamma$ is   a circle (so $uu'$ has length $\ell$, and $g_0=b_0$). We know that $x_j$ and $y_i$ are coprime for $1\le i\le j\le \ell-1$   
by Lemma \ref{copr}, but  
$r_0$ does not have to be  coprime with $X$,   
so we cannot apply the second assertion  of  Lemma \ref{wt}: we will need the third one.  

Our goal  is to show that $\frac mX=\frac nY$ is an integer (since $Q= 1$). If it is not, write it in lowest terms and consider a prime $p$ dividing the denominator. It divides $X$ and $Y$, so it does not divide $r_0$, as explained above. Assertion 3 of  Lemma \ref{wt} then implies that $m$ is a multiple of $X/\theta$ with $\theta$ not divisible by $p$, a contradiction. 

In the lollipop case, we consider $p$ dividing the denominator of  $\frac m{QX}=\frac n{QY}$. We will    apply Lemma \ref{wt} to the initial segment $u\ov u$ of length $k+\ell$ of $uu'$. 
We know that 
 $q_j$ and $r_i$ are coprime for $0\le i\le j\le k-1$ (recall that $r_0$ and $Q$ are coprime), and    $x_j$, $y_i$ are coprime for $1\le i\le j\le \ell -1$, but we do not know that  $ r_i$ and $ x_j $ are coprime.  
 
 If  $p$ does not divide 
 $ r_i\wedge  x_j $ for $0\le i\le k$ and $0\le j\le\ell-1$, Assertion 3 of  Lemma \ref{wt} applies and gives a contradiction as above: $m$ is a multiple of $QX/\theta$ with $\theta$ not divisible by $p$. 
  
 The condition that $p$ does not divide  $ r_i\wedge  x_j $ certainly holds if 
 $p$ does not divide $X$.
 By symmetry of the statement of Theorem \ref{deBS}, we may therefore assume that  $p$ divides both $X$ and  $Y$. It follows  that $p$ does not divide $r_0$, and also that it does not divide $R=\prod_{i=1}^k r_i$ by the second assertion of Lemma \ref{copr}. Thus the condition also  holds in this case. 
\end{proof}

\section{GBS quotients of $BS(m,n)$}     \label{cons}

In the previous section we fixed $G$ and we determined for which values of $m,n$ one may view $G$ as a quotient of $BS(m,n)$. We now fix $m,n$
and we study the set of all GBS groups $G$ which are quotients of $BS(m,n)$, using Theorem \ref{deBS}.   
We assume $G\ne BS(1,\pm1)$ (the results are trivial in these two cases). 

We always represent $G$ by a reduced labelled graph $\Gamma$. 

\begin{prop} Let $\Gamma$ be a reduced labelled graph   representing a GBS group $G$ which is a quotient of $BS(m,n)$.

\begin{enumerate}
\item 
There is a bound, depending only on $m$ and $n$, for the number of edges of $\Gamma$. 
\item If $m\ne n$, and $G\ne K$, every prime  $p$ dividing a label of $\Gamma$ must divide $m n$.
\end{enumerate}
\end{prop}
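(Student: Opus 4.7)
The plan is to deduce everything from Theorem \ref{deBS}: since $G$ is a quotient of $BS(m,n)$ it is $2$-generated, so $\Gamma$ is one of the three shapes of Figure \ref{disp}, and its label products $Q,R,X,Y$ must divide $m$ or $n$ in prescribed ways. First I would dispose of the three elementary GBS groups $\Z$, $\Z^2$, $K$, for each of which the reduced $\Gamma$ has at most one edge with a label in $\{\pm1,\pm2\}$; so both conclusions hold trivially (and for (2) the only elementary case allowed by $m\ne n$ and $G\ne K$ is $G=\Z$, which has no labels at all).

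For (1) in the non-elementary case, I separate according to the shape of $\Gamma$. In the segment case Theorem \ref{deBS} gives $m=n$ with $Q\mid m$ or $R\mid m$; a reduced segment contains no loops, so every label has absolute value $\ge2$, hence $|Q|\ge 2^k$ and already $k\le\log_2|m|$. In the lollipop/circle case $(m,n)$ is an integer multiple of $(QX,QY)$ or $(QY,QX)$, so $|QX|\le\max(|m|,|n|)$ and $|QY|\le\max(|m|,|n|)$. The only edge of $\Gamma$ that can carry a label $\pm1$ is a loop, and this only occurs in the subcase $\ell=1$ (where the entire circle is a single edge); so when $\ell\ge2$ all labels have absolute value $\ge2$, giving $|Q|\ge2^k$ and $|X|\ge2^\ell$, whence $k+\ell\le\log_2\max(|m|,|n|)$, and when $\ell=1$ the circle contributes only one edge while $|Q|\le\max(|m|,|n|)$ bounds $k$. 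In all cases the edge count of $\Gamma$ is bounded by $\log_2\max(|m|,|n|)+1$.

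For (2), the hypothesis $m\ne n$ combined with Theorem \ref{deBS} rules out the segment case, so $\Gamma$ is a circle or a lollipop and $(m,n)=\alpha(QX,QY)$ (up to swapping coordinates) for some nonzero integer $\alpha$. In particular $mn=\alpha^2 Q^2 XY$, so every prime dividing $Q$, $X$ or $Y$ divides $mn$. This already covers every label $q_i$, $x_j$, $y_i$. The remaining labels are the $r_i$'s (present only in the lollipop case): by Lemma \ref{copr}, any prime dividing $R=\prod_i r_i$ divides $X$ or $Y$, hence $mn$. This exhausts all labels of $\Gamma$.

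The only step requiring a little care is the edge count when $\ell=1$, since a loop may legitimately carry the label $\pm1$; but at most one edge of $\Gamma$ is affected, so the logarithmic bound survives. Everything else is bookkeeping on top of Theorem \ref{deBS} and Lemma \ref{copr}.
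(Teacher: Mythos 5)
Your proposal is correct and follows essentially the same route as the paper: both deduce everything from Theorem \ref{deBS} together with the observation that a reduced graph of these shapes has at most two labels equal to $\pm1$ (hence $|Q|,|R|,|X|$ grow at least like $2^{k}$ or $2^{\ell}$), and both invoke Lemma \ref{copr} to handle the primes dividing the $r_i$'s in part (2). Your version merely makes the bookkeeping explicit (the logarithmic bound, the separate treatment of the elementary groups), and the one cosmetic slip in the segment case — citing only $|Q|\ge 2^k$ when the bound might come from $R\mid m$ — is harmless since $|R|\ge 2^k$ as well.
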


\begin{proof}
This follows  easily  from Theorem \ref {deBS}, noting that at most 2 labels of $\Gamma$ may be equal to $\pm1$. If $\Gamma$ is a lollipop, $(m,n)$ is a multiple of $(QX,QY)$ or $(QY,QX)$. 
In particular $Q,X,Y$  are bounded, so the number of edges is bounded. Primes dividing $Q$, $X$ or $Y$ divide $m$ or $n$, and primes dividing $R$ divide 
$X$ or $Y$ by Lemma \ref{copr}, so Assertion 2 holds in this case. If $\Gamma$ is a segment, then $m=n$ and the number of edges is bounded because $Q$ or $R$ divides $m$.
\end{proof}

 \begin{rem} \label{bd}
 In the segment case, one of the numbers $Q$ or $R$ is bounded in terms of $m,n$, but there is no control on the other. In the lollipop case, $Q,X,Y$  are bounded, but $R$ does not have to be. See also Remark \ref{bdd}.
 \end{rem}
 
 \begin{prop} \label{quis}
Given $m$ and $n$, the following are equivalent    (up to swapping $m$ and $n$):
\begin{enumerate}
 \item every non-solvable GBS quotient of $BS(m,n)$ is isomorphic to $BS(m,n)$;
 \item $m=\pm1$, or $ | m | $ is a prime number and $m\ne n$. 
\end{enumerate}
\end{prop}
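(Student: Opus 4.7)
The plan is to prove each implication separately: the forward direction reduces to a case analysis via Theorem~\ref{deBS}, while the reverse direction requires explicit constructions of proper GBS quotients.

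For (2)$\Rightarrow$(1): if $m=\pm1$ then $BS(m,n)$ is solvable and has no non-solvable quotients at all, so (1) holds vacuously. If $|m|=p$ is prime and $m\ne n$, let $G$ be a non-solvable GBS quotient. Since $G$ is non-solvable and in particular not a $BS(1,n')$, Lemma~\ref{padun} lets me represent $G$ by a reduced labelled graph $\Gamma$ with no label equal to $\pm1$. Theorem~\ref{deBS} rules out the segment case (it would force $m=n$), so $(m,n)=\alpha(QX,QY)$ or $\alpha(QY,QX)$ for some integer $\alpha$. Primality of $p=|m|$ forces one of the three factors $|\alpha|$, $Q$, $X$ (or $|\alpha|$, $Q$, $Y$) to equal $p$ and the others to equal $1$. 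But with no $\pm1$ labels, $|q_i|\ge2$ for every $i$ so $Q=1$ forces $k=0$ (making $\Gamma$ a circle), and $|x_j|\ge2$ for every $j$ so $X=1$ is impossible whenever $\ell\ge1$; the symmetric statement eliminates $Y=1$. The only surviving configuration is $|\alpha|=1$, $Q=1$, $X=p$ (or symmetrically $Y=p$), giving a circle of length $\ell=1$ with $x_0=\pm p$ and $y_1=\pm n$. Hence $G\cong BS(\pm m,\pm n)$; among these four groups only the ones isomorphic to $BS(m,n)$ can themselves be quotients of $BS(m,n)$ (Lemma~\ref{eBS}), so $G\cong BS(m,n)$.

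For (1)$\Rightarrow$(2) I argue by contraposition. If (2) fails then, up to swapping $m$ and $n$, we have $|m|,|n|\ge 2$, and either $m=n$ or $m\ne n$ with both $|m|$ and $|n|$ composite. I must produce a non-solvable GBS quotient of $BS(m,n)$ not isomorphic to $BS(m,n)$. In the first case the segment quotient $G=\langle a,b\mid a^{|m|}=b^R\rangle$ (with $R\ge 2$ chosen so that $(Q,R)\ne(2,2)$; taking $R=3$ if $|m|=2$) is a non-solvable amalgam with $\beta(G)=0\ne1=\beta(BS(m,m))$, so $G\not\cong BS(m,m)$.

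In the second case, set $g=\gcd(|m|,|n|)$ and write $|m|=gm'$, $|n|=gn'$ with $\gcd(m',n')=1$. When $g\ge2$, Theorem~\ref{deBS} (applied with $Q=1$, $\ell=1$, and multiplier $\alpha=g/d$) shows that $BS(dm',dn')$ is a quotient of $BS(m,n)$ for every divisor $d$ of $g$. If $g$ is composite, take a prime $p\mid g$ with $p<g$; then $BS(pm',pn')$ is non-solvable (both entries have absolute value $\ge2$) and its parameters differ from those of $BS(m,n)$ and $BS(n,m)$, hence it is not isomorphic to $BS(m,n)$. If $g$ is prime, compositeness of $|m|$ and $|n|$ forces $|m'|,|n'|\ge2$, so $d=1$ works and $BS(m',n')$ is the witness. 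When $\gcd(|m|,|n|)=1$, factor $|m|=x_0x_1$ and $|n|=y_1y_2$ with all four factors $\ge 2$ (possible since both $|m|,|n|$ are composite) and let $\Gamma$ be the circle of length $2$ with labels $(x_0,y_1,x_1,y_2)$ in cyclic order. Coprimality of $m$ and $n$ makes the four labels pairwise coprime, so $\Gamma$ is strongly slide-free and is therefore the unique reduced representation of its group $G$ up to admissible sign changes. Since the reduced representation of $BS(m,n)$ is a single loop, $G\not\cong BS(m,n)$; and $G$ contains the amalgam $\langle a,b\mid a^{x_0}=b^{y_1}\rangle$ (a torus-knot group, since $\gcd(x_0,y_1)=1$ and both exponents are $\ge 2$) as a vertex subgroup, so $G$ is non-solvable.

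The main obstacle is the edge case $m\mid n$ (or $n\mid m$) within $g\ge2$: there $m'=\pm1$, making $BS(m',n')$ solvable and thus useless as a witness. The rescue is that $g=|m|$ is itself composite by hypothesis, so a prime $p<g$ divides $g$ and $BS(p,pn/m)$ is the required non-solvable, non-isomorphic Baumslag--Solitar quotient.
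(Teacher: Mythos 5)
Your proof is correct and follows essentially the same route as the paper: the forward implication via Lemma~\ref{padun} and Theorem~\ref{deBS}, and the converse via the same three witnesses (an amalgam when $m=n$, division by a common factor when $\gcd(m,n)>1$, and a strongly slide-free circle of length $2$ when $m,n$ are coprime). Two cosmetic slips that do not affect the argument: when $m$ or $n$ is negative the factorizations should carry the signs (write $m=gm'$ with $m'$ possibly negative rather than $|m|=gm'$), and the four labels of the length-$2$ circle need not be \emph{pairwise} coprime (only the two labels meeting at each vertex are --- one divides $m$, the other divides $n$ --- which is exactly what strong slide-freeness requires).
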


\begin{proof} 
2 implies 1 by 
Theorem \ref{deBS} and Lemma \ref{padun}. We now construct a non-solvable quotient under the assumption that 2 does not hold. In particular, $m,n\ne\pm1$.  If $m=n$, we map $BS(m,n)$ to $\langle a,b\mid a^m=b^N\rangle$ with $N$ arbitrary, so assume $m\ne n$.  Then none of $ | m | , | n | $ is prime. 
If $p$ is a prime dividing $m$ and $n$,
then $BS(m,n)$ maps onto $BS(m/p,n/p)$ (see Lemma \ref{eBS}). If $m,n$ are coprime, we write  $m=\alpha\beta$ and $n=\gamma\delta$ with integers $\alpha,\beta, \gamma, \delta$ different from $\pm1$, and  we consider $G =\langle a,b,t\mid a^\alpha=b^\gamma, tb^\beta t\m=a^\delta\rangle$ (it is represented by a circle of length 2). It is generated by $a$ and $t$, which satisfy $ta^{\alpha\beta}t\m=a^{\gamma\delta}$, so it is a quotient of 
$BS(m,n) $.
It is not isomorphic to $BS(m,n)$ by \cite{Fodef} since $ \alpha\wedge \delta =  \beta\wedge\gamma =1$  ($\Gamma$ is strongly slide-free).
\end{proof}

\begin{cor} \label{qui}
Given $m$ and $n$, the following are equivalent    (up to swapping $m$ and $n$):
\begin{enumerate}
 \item every non-cyclic GBS quotient of $BS(m,n)$ is isomorphic to $BS(m,n)$;
 \item $m=\pm1$, or $ | m | $ is a prime number not dividing $n$. 
\end{enumerate}
\end{cor}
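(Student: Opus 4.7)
The plan is to bootstrap from Proposition \ref{quis}. Condition (1) of the corollary is strictly stronger than condition (1) of the proposition, because a non-cyclic GBS quotient may be solvable; the only non-cyclic solvable GBS groups are $\Z^2$, $K$, and $BS(1,k)$ for $|k|\ge 2$. Similarly, condition (2) strengthens ``$m\ne n$'' to ``$m\nmid n$'' when $|m|$ is prime. The work therefore concentrates on detecting or ruling out non-cyclic solvable quotients, which by Lemma \ref{eBS} is a matter of elementary divisibility.

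For (2)$\Ra$(1) I would split into two cases. When $m=\pm 1$, I argue directly from Theorem \ref{deBS}: a non-cyclic GBS quotient of $BS(1,n)$ is represented by a reduced labelled graph $\Gamma$, and the segment case forces $1=n$; in the circle/lollipop case, $(1,n)=k(QX,QY)$ (up to swap) forces $Q=X=\pm1$, which combined with the reduced hypothesis (no $\pm 1$ label on a non-loop edge) collapses $\Gamma$ to a single loop with labels $(\pm 1,n)$, so the quotient is $BS(1,n)$ itself. When $|m|$ is prime and $m\nmid n$ (so in particular $m\ne n$), Proposition \ref{quis} handles the non-solvable quotients. For a non-cyclic solvable quotient, Lemma \ref{eBS} shows that mapping onto $\Z^2$, $K$, or $BS(1,k)$ would force respectively $m=n$, $m=-n$ (or $m=n$ even), or $m\mid n$ or $n\mid m$ (and $|m|$ prime together with $|n|\ge 2$ forces $n=\pm m$); each contradicts $m\nmid n$. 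The edge case $n=\pm 1$ is handled by swapping to the $m=\pm 1$ case.

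For (1)$\Ra$(2) I argue the contrapositive. Assume (2) fails: then $m,n\ne\pm 1$ and, for each of $|m|,|n|$, either it is not prime or it divides the other. If both $|m|,|n|$ are composite, condition (2) of Proposition \ref{quis} also fails, so that proposition supplies a non-isomorphic non-solvable quotient. If, say, $|n|$ is prime and $|m|$ is composite, the hypothesis forces $n\mid m$; writing $m=\alpha n$ with $|\alpha|\ge 2$, Lemma \ref{eBS} yields the epimorphism $BS(m,n)=BS(\alpha n,n)\twoheadrightarrow BS(1,\alpha)$, a non-cyclic solvable quotient not isomorphic to the non-solvable $BS(m,n)$. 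If both are prime with mutual divisibility, then $n=\pm m$: the subcase $m=n$ is again covered by Proposition \ref{quis}, while $n=-m$ gives the epimorphism $BS(m,-m)\twoheadrightarrow K$ from Lemma \ref{eBS}.

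The main obstacle will be bookkeeping the ``up to swapping $m$ and $n$'' clauses in both statements simultaneously with the boundary cases $n=\pm 1$ and $n=\pm m$, verifying along the way that each quotient produced is both genuinely non-cyclic and genuinely non-isomorphic to $BS(m,n)$. Once that case split is organised, no deep new input is needed: everything reduces to divisibility computations inside Lemma \ref{eBS}.
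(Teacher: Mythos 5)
Your argument is correct and follows essentially the same route as the paper's, which deduces $2\Ra1$ from Theorem \ref{deBS} and Lemma \ref{padun} and, for the converse, adds to Proposition \ref{quis} precisely your observation that a prime $|m|$ dividing $n$ produces the non-cyclic solvable quotient $BS(1,\tfrac nm)$ (your $BS(1,\alpha)$, resp.\ $K$ when $n=-m$). The one point to tighten is that Theorem \ref{deBS} excludes $G=K$, so in your $m=\pm1$ case the quotient $K$ must be ruled out separately via Lemma \ref{eBS} (it occurs only for $n=-1$, where $BS(1,-1)=K$ itself), exactly as you already do in the prime case; likewise the segment case there is excluded not by ``$1=n$'' but by the divisibility requirement $Q\mid m$ or $R\mid m$, which fails since a reduced segment has no label $\pm1$.
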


\begin{proof}  
As before, 2 implies 1 by 
Theorem \ref{deBS} and Lemma \ref{padun}. For the converse, we simply observe that $BS(m,n)$ maps onto $BS(1,\frac nm)$ if   $ |m |  $ is a prime dividing $n$. 
\end{proof}

\begin{cor} If $G$ is a Baumslag-Solitar group, the following are equivalent:
\begin{itemize}
\item every epimorphism from $G$ to a non-cyclic GBS group is an isomorphism;
\item $G$ is a solvable $BS(1,n)$. 
 \end{itemize}
 \end{cor}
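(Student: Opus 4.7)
The plan is to combine two tools already in hand: the Hopficity criterion for Baumslag--Solitar groups from \cite{CL} (recalled in the introduction), and Corollary \ref{qui}, which tells us exactly when every non-cyclic GBS quotient of $BS(m,n)$ is isomorphic to $BS(m,n)$ itself.

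For the direction ``$G$ solvable $\Rightarrow$ the epimorphism property'', I would take $G=BS(m,n)$ with one of $m,n$ equal to $\pm1$. Such $G$ is Hopfian by the criterion, and by Corollary \ref{qui} every non-cyclic GBS quotient $H$ of $G$ is isomorphic to $G$. Given an epimorphism $f\colon G\twoheadrightarrow H$, pick any isomorphism $\varphi\colon H\xrightarrow{\sim} G$: then $\varphi\circ f$ is a surjective endomorphism of $G$, hence an automorphism by Hopficity, and since $\varphi$ is bijective $f$ itself is an isomorphism.

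For the converse, I would argue by contradiction: assume the epimorphism property holds for $G=BS(m,n)$ but $m,n\neq\pm1$, and split according to whether $G$ is Hopfian. If $G$ is non-Hopfian, by definition there is a non-injective epimorphism $G\twoheadrightarrow G$, and since $G$ is itself a non-cyclic GBS group (note that any $BS(m,n)$ is non-cyclic, as its abelianization is $\Z^2$ or $\Z\oplus\Z/|m-n|\Z$), this directly contradicts the property. If $G$ is Hopfian, then by the criterion $m$ and $n$ share the same set of prime divisors; combined with $m,n\neq\pm1$, this means that neither $|m|$ is a prime not dividing $n$ nor $|n|$ is a prime not dividing $m$, so condition (2) of Corollary \ref{qui} (which is stated up to swapping $m,n$) fails. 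Hence there exists a non-cyclic GBS quotient $H$ of $G$ with $H\not\cong G$, and the corresponding epimorphism $G\twoheadrightarrow H$ cannot be injective, again contradicting the property.

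The only delicate point is the verification that in the Hopfian subcase of the converse, condition (2) of Corollary \ref{qui} genuinely fails; but this is immediate because $|m|,|n|\ge2$ share all prime divisors, so neither can be a prime not dividing the other. Everything else is formal, and the real content of the corollary lies in Corollary \ref{qui} and the Collins--Levin Hopficity criterion.
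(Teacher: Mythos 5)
Your proof is correct and follows essentially the same route as the paper, which derives the corollary in one line from Corollary \ref{qui} together with the Collins--Levin Hopficity criterion (the paper organizes the converse by reading off condition 2 of Corollary \ref{qui} and then invoking non-Hopficity of $BS(p,n)$ for $p$ prime not dividing $n\ne\pm1$, rather than splitting on Hopficity first, but the ingredients are identical). One small slip worth fixing: your parenthetical justification that $BS(m,n)$ is non-cyclic via its abelianization $\Z\oplus\Z/|m-n|\Z$ breaks down when $|m-n|=1$ (e.g.\ $BS(2,3)$, which is precisely a non-Hopfian case you need); the cleanest repair is to observe that cyclic groups are Hopfian, so a non-Hopfian $G$ cannot be cyclic.
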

 
 This is a special case of Theorem A of  \cite{DRT}.
\begin{proof}
 This follows from the previous corollary: $BS(1,n)$ is Hopfian, but $BS(m,n)$ is not when $m$ is a prime not dividing   $n\ne\pm1$ (see \cite{BS,CL} and Remark \ref{nh}).
\end{proof}

\begin{prop} \label{infq}
Given $m$ and $n$, 
 the  following are equivalent:
\begin{enumerate}
\item
if  a GBS quotient $G$ of $BS(m,n)$ is represented by a labelled graph $\Gamma$ with no label $\pm1$, and $G\ne K$, then $\Gamma$   is homeomorphic to a circle;
\item
up to    isomorphism, there are only  finitely many GBS quotients of $BS(m,n)$;
\item one of the following holds   (up to swapping $m$ and $n$):
\begin{enumerate}
\item $m$ and $n$ are coprime;
\item $ | m | $ is prime and $m\ne n$;
\item $m=-n$;
\item $ | m | $ and $ | n | $ are powers of the same prime $p$, and $m\ne n$.
\end{enumerate}
\end{enumerate}
\end{prop}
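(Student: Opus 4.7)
The plan is to prove the cycle $(3)\Rightarrow(1)\Rightarrow(2)\Rightarrow(3)$. Throughout I fix a GBS quotient $G\ne K$ of $BS(m,n)$ represented by a reduced labelled graph $\Gamma$ with no label $\pm 1$ (appealing to Lemma~\ref{padun} where needed), and apply Theorem~\ref{deBS}.

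For $(3)\Rightarrow(1)$ I eliminate the segment and the non-circle lollipop cases for each of (a)--(d). The segment case requires $m=n$, which conflicts with $m\ne n$ in (b) and (d), with $m=-n$ in (c) (forcing $m=0$), and with $\gcd(m,n)=1$ in (a) (since $|Q|\ge 2$ for any stem without $\pm 1$ labels, yet $Q\mid m=\pm 1$). In the lollipop case with $k\ge 1$: case (a) gives $Q\mid\gcd(QX,QY)\mid\gcd(m,n)=1$, contradicting $|Q|\ge 2$; case (b) gives $|Q|\,|X|$ dividing a prime $|m|$ while $|Q|,|X|\ge 2$; case (c) forces $X=-Y$, so $\operatorname{supp}(X)=\operatorname{supp}(Y)$ and Lemma~\ref{copr} forces $R=\pm 1$, contradicting $|R|\ge 2$; case (d) makes every label a $\pm$-power of the common prime $p$, and Lemma~\ref{copr} then forces either $q_1,r_1$ to be coprime $\pm$-powers of $p$ when $k\ge 2$ (so one is $\pm 1$), or $r_1$ to have no prime divisor when $k=1$ (the symmetric difference of $\operatorname{supp}(X)$ and $\operatorname{supp}(Y)$ being empty since both equal $\{p\}$). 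Hence $k=0$ and $\Gamma$ is a circle.

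For $(1)\Rightarrow(2)$: by Lemma~\ref{padun}, every 2-generated GBS quotient other than $K$ and the solvable $BS(1,j)$ admits a representation without $\pm 1$ labels, and so is a circle under (1). Since $|X|$ divides $|m|$, $|Y|$ divides $|n|$, and each label has absolute value $\ge 2$, both labels and length $\ell$ are bounded; only finitely many circle graphs arise, only finitely many solvable $BS(1,j)$ are quotients by Lemma~\ref{eBS}, and at most one Klein bottle group, so the total is finite.

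For the contrapositive of $(2)\Rightarrow(3)$, assume (3) fails. If $m=n$ (necessarily with $|m|\ge 2$), pick any prime $p\mid m$; then $\langle a,b\mid a^p=b^r\rangle$ is a quotient of $BS(m,m)$ for every $r\in\Z\setminus\{0,\pm 1\}$ by Theorem~\ref{deBS}, and distinct values of $|r|$ give non-isomorphic groups (distinguish by $G/Z(G)\cong \Z/p * \Z/|r|$). Otherwise $m\ne\pm n$, $d=\gcd(m,n)>1$, and neither (b) nor (d) holds; writing $m=dm'$, $n=dn'$ with $\gcd(m',n')=1$, I select $Q\mid d$ with $|Q|\ge 2$, $|m/Q|,|n/Q|\ge 2$, and $\operatorname{supp}(m/Q)\ne\operatorname{supp}(n/Q)$. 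Fixing a prime $p^*$ in the symmetric difference, the lollipops with $k=\ell=1$ and labels $(Q,\,(p^*)^{j},\,m/Q,\,n/Q)$ for $j\ge 1$ satisfy Lemma~\ref{copr} and are 2-generated GBS quotients of $BS(m,n)$ by Theorem~\ref{deBS}; varying $j$ produces the desired infinite family. The main difficulty is producing such a $Q$ when $m'=\pm 1$, since $Q=d$ forces $|m/Q|=1$: here the failure of (b) forces $d=|m|$ to be composite, the failure of (d) forces $\operatorname{supp}(d)$ to contain at least two primes, and I take $Q$ equal to the full $p$-part of $d$ for a prime $p\in\operatorname{supp}(n')$, which keeps $|m/Q|,|n/Q|\ge 2$ and separates the prime supports.
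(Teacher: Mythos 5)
Your implications $(3)\Rightarrow(1)$ and $(1)\Rightarrow(2)$ are correct and follow the paper's route (Theorem \ref{deBS} plus Lemma \ref{copr} to kill segments and non-circle lollipops; Lemma \ref{padun} plus boundedness of circle labels for finiteness), and your use of $G/Z(G)\cong \Z/p*\Z/|r|$ to separate the quotients of $BS(m,m)$ is a nice, self-contained replacement for the paper's appeal to Clay--Forester in that case. The problems are in $(2)\Rightarrow(3)$ when $m\ne\pm n$. First, your recipe for $Q$ in the ``main difficulty'' case $m'=\pm1$ rests on a false claim: the failure of (d) does \emph{not} force $\operatorname{supp}(d)$ to contain two primes. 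Take $(m,n)=(4,24)$: all of (a)--(d) fail, yet $d=4$ and $\operatorname{supp}(d)=\{2\}$; the ``full $p$-part of $d$ for $p\in\operatorname{supp}(n')$'' is then $4$ (giving $|m/Q|=1$) or $1$, so your recipe produces no admissible $Q$, even though $Q=2$ works. (A suitable $Q$ does always exist -- the paper's factorization $m=\alpha\beta$, $n=\alpha\beta\gamma\delta$ with $\alpha\wedge\delta=1$ and $\alpha,\beta,\delta\ne\pm1$ is the correct way to produce it -- but your argument as written does not establish this.)

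Second, and more seriously, you give no argument that the lollipop groups $\langle a,b,t\mid a^Q=b^{(p^*)^j},\,tb^{m/Q}t\m=b^{n/Q}\rangle$ are pairwise non-isomorphic as $j$ varies. This is the genuinely delicate point of the implication: easy invariants fail (the abelianization has torsion of order $|m-n|$ independently of $j$, and the modular image is always $\langle m/n\rangle$), and the paper has to invoke the Clay--Forester classification -- two reduced labelled graphs represent isomorphic GBS groups only if they are related by sign changes, slides, inductions and $\cala^{\pm1}$-moves -- and then check that no sequence of moves changes the exponent. Your construction makes this check harder than the paper's: since $m/Q$ and $n/Q$ need not be coprime, one may divide the other (e.g.\ loop labels $3$ and $6$ for $(m,n)=(12,24)$, $Q=4$), which places the graphs in an ascending deformation space where induction and $\cala^{\pm1}$-moves are genuinely available; the paper avoids this by arranging coprime loop labels $m',n'\ne\pm1$ (and imposes the extra condition $p^N\nmid n'$ to control slides). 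Without either a rigidity argument or an explicit invariant, the claim that ``varying $j$ produces the desired infinite family'' is unproved.
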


In particular:
\begin{cor}
 If $BS(m,n)$ has a GBS quotient $G\ne K$ represented by a labelled graph $\Gamma$ with no label equal to $\pm1$ such that $\Gamma$   is not homeomorphic to a circle, then $BS(m,n)$ has infinitely many non-isomorphic GBS quotients. \qed
\end{cor}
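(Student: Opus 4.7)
This corollary is the contrapositive of the implication (2) $\Rightarrow$ (1) in Proposition \ref{infq}, so the simplest plan is to invoke that equivalence: the hypothesis on $\Gamma$ negates condition (1), so condition (2) must fail, giving infinitely many isomorphism classes of GBS quotients of $BS(m,n)$.

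To see this self-containedly, I would exploit the asymmetry recorded in Remark \ref{bd}: in both cases of Theorem \ref{deBS}, the divisibility constraints on $m$ and $n$ involve $Q$, $X$, $Y$, but never the product $R = r_1 \cdots r_k$. Starting from the given non-circle graph $\Gamma$ (which has $k \ge 1$), I would construct an infinite family $\Gamma_N$ by multiplying one label $r_i$, say $r_k$, by a sequence of pairwise distinct primes $N$, each chosen coprime to every existing label. By Theorem \ref{deBS}, each resulting group $G_N$ is again a quotient of $BS(m,n)$, and is still generated by the same pair (either $(a_0,a_k)$ in the segment case or $(a_0,\tau)$ in the lollipop case), so 2-generation is preserved and the hypotheses of Theorem \ref{deBS} apply uniformly.

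The main obstacle is showing the $G_N$ are pairwise non-isomorphic. In the segment case this is immediate: $\langle a, b \mid a^Q = b^R \rangle$ is classified by the unordered pair $\{|Q|,|R|\}$, so varying $r_k$ by distinct primes gives non-isomorphic groups. In the lollipop case, I would choose the primes $N$ so that each $\Gamma_N$ is strongly slide-free, and then appeal to the uniqueness (up to admissible sign changes) recalled in Section \ref{prel} from \cite{Fodef}. Alternatively, since the torsion subgroup of the abelianization $G_N^{\mathrm{ab}}$ is sensitive to the value of $r_k$ (while the modular image, which ignores $R$, is not), one may simply distinguish the $G_N$ via their abelianizations.
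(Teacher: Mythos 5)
Your opening move is exactly the paper's (implicit) proof: the corollary is the contrapositive of the implication $(2)\Rightarrow(1)$ of Proposition \ref{infq}, which is why the statement carries no separate argument in the paper. Had you stopped after the first paragraph, the proof would be complete. The self-contained construction you then offer, however, has a genuine gap, and it is worth seeing why, because it explains the particular shape of the families $G_N$, $H_N$ in the proof of Proposition \ref{infq}.

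The problem is in the lollipop case. If you multiply $r_k$ by a prime $N$ coprime to every existing label, the circle subgraph becomes an interior $N$-plateau: the only label on an edge leaving the circle, namely $r_kN$ near $w_0$, is divisible by $N$, while no label carried by the circle is. By Lemma \ref{gen} (i.e.\ Theorem 1.1 of \cite{Le2}) the resulting group then has rank $3$, so it is not $2$-generated and hence not a quotient of $BS(m,n)$ at all; this is precisely the content of the second part of Lemma \ref{copr}, which forces every prime dividing $R$ to divide $X$ or $Y$. Since only finitely many primes divide $XY$, no infinite family can be obtained by adjoining pairwise distinct new primes to $R$; the paper instead multiplies $R$ by increasing \emph{powers} $p^N$ of a single prime $p$ already dividing $Y$. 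Your fallback arguments for non-isomorphism also do not survive: for $G=\langle a,b,t\mid a^Q=b^R,\ tb^Xt\m=b^Y\rangle$ the torsion of the abelianization is $\Z/d_1\oplus\Z/d_2$ with $d_1=Q\wedge R\wedge(X-Y)$ and $d_1d_2= | Q(X-Y) | $, which is unchanged when $R$ is multiplied by anything coprime to $Q$ — exactly your situation — so the abelianization cannot separate the groups; this is why the paper has to invoke the Clay--Forester classification of moves between reduced graphs \cite{CFWh}. (In the segment case your idea is essentially the paper's map onto $\langle a,b\mid a^m=b^N\rangle$, but note that $m$ need only be divisible by \emph{one} of $Q$, $R$, so the label you enlarge must sit at the other end of the segment.)
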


\begin{rem}\label{bdd}
When none of $m,n$ divides the other, reduced labelled graphs representing a quotient of $BS(m,n)$
have no label 
  $\pm1$, and there are finitely many such graphs. 
But when divisibility occurs it is important in the proposition and the corollary to exclude labels equal to $\pm1$. For instance, $BS(2,4)$ has finitely many GBS quotients up to isomorphism, but it is represented by the (reduced) 
non-circle lollipop of Figure \ref{ptit} with $R$ an arbitrarily large power of 2 (and $Q=Y=2,X=1$).
\end{rem} 

\begin{proof}[Proof of Proposition \ref{infq}]
Lemma \ref{padun} and Remark \ref{bd} show that 1 implies 2. We show that 3 implies 1.
Segments are ruled out since each assumption implies $m\ne n$ (unless $m=n=\pm1$). Non-circle lollipops are ruled out  under the first two assumptions. Under the other two, $X$ and $Y$ are opposite or powers of the same prime (up to sign), so lollipops are ruled out by the last assertion of Lemma \ref{copr}. 

To show that 2 implies 3, we assume that
  none of the   conditions (a)-(d) holds, and we construct infinitely many quotients. To prove that they are  not isomorphic, we use a result  by 
  Clay-Forester \cite{CFWh} (see Section 2.2 of \cite{CFIso} for the case of GBS groups):  
if two reduced labelled graphs representing   non-elementary GBS groups 
cannot be connected by a finite sequence of admissible sign changes, slide moves, induction moves, $\cala^{\pm1}$ moves, they represent non-isomorphic   groups. 
  
  We distinguish three cases. 
  
  If $m=n\ne\pm1$, then $BS(m,m)$ maps onto $\langle a,b\mid a^m=b^N\rangle$ for any $N$. 
  
    If none of $m,n$ divides the other, we write $m=\delta m'$ and $n=\delta n'$, with $m',n'$ coprime and $\delta,m',n'\ne\pm1$. Let $p$ be a prime dividing $n'$. For $N> 1$ such that $p^N$ does not divide $n'$, consider $$G_N=\langle a,b,t\mid a^ \delta=b^{p^N}, tb^{m'}t\m=b^{n'}\rangle $$ as on Figure \ref{ptit}. These groups are quotients of $BS(m,n)$ by the easy direction of Theorem \ref{deBS}.   Using \cite{CFWh}, one checks   that  they are pairwise non-isomorphic: no sequence of moves     can transform the graph defining $G_N$ into the graph defining $G_M$ for $M\ne N$. 

    If $m$ divides $n$, and $m\ne n$, we may write $m=\alpha\beta$ and $n=\alpha\beta\gamma\delta$ with $\alpha,\delta$ coprime and $\alpha,\beta,\delta\ne\pm1$ (but $\gamma=\pm1$ is allowed). We define $$H_N=\langle a,b,t\mid a^ \beta=b^{\delta^N}, tb^{\alpha}t\m=b^{\alpha\gamma\delta}\rangle $$
    for $N>1$ such that $\delta ^N$ does not divide $\gamma\delta$.
  These groups are pairwise non-isomorphic    by \cite{CFWh}.
\end{proof}

\section{Groups epi-equivalent  to a Baumslag-Solitar group}   \label{sens2}

In Section \ref{sens1} we have determined for which values of $m,n$ a given      2-generated GBS group $G$ is a quotient of $BS(m,n)$. We now consider Baumslag-Solitar  quotients of $G$. 
Lemma \ref{qbs} says that we should restrict to groups represented by a lollipop (possibly a circle). In this case Theorem \ref{deBS} implies that there is a   
``smallest'' Baumslag-Solitar group mapping to $G$, namely $BS(QX,QY)$, and we shall determine whether $G$ maps onto $BS(QX,QY)$ or not (Theorem \ref{hop}).     In other words, we shall find the GBS groups epi-equivalent to a given $BS(m,n)$, in the following sense.

\begin{dfn} Two groups are \emph{epi-equivalent} if each is isomorphic to a quotient of the other. 
\end{dfn}

Of course, any group epi-equivalent to a Hopfian group is isomorphic to it. Recall \cite{CL} that  $BS(m,n)$ is Hopfian if and only if $m$ and $n$ have the same prime divisors, or one of them equals $\pm1$ (see Remark \ref{nh}).

\begin{rem}
 When a 2-generated GBS group   $G$ represented by a lollipop does not map onto $BS(QX,QY)$ (in particular when $BS(QX,QY)$ is Hopfian), there does not always exist a maximal Baumslag-Solitar quotient. For instance, $\langle a,b,t \mid a^3=b^3, tb^2t\m=a^4\rangle$ is a  quotient of the Hopfian group $BS(12,6)$. It maps onto $BS(4,2)$ and  $BS(6,3)$, but not onto $BS(12,6)$.
 \end{rem}

We start with a simple case.

\begin{prop}\label{prem}
Fix coprime integers $m$ and $n$.
  Given a non-cyclic  GBS group $G$, the following are equivalent:
 
\begin{enumerate}
 \item $G$ is a quotient of $BS(m,n)$;
 \item $G$ is epi-equivalent to $BS(m,n)$;
 \item $G$  
 may be represented by  a reduced labelled graph $\Gamma$ homeomorphic to a circle, with $X=m$ and $Y=n$.
\end{enumerate}
Up to isomorphism, there are only finitely many groups $G$ satisfying these conditions.
\end{prop}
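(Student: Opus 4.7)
The strategy is to prove $(2) \Rightarrow (1) \Rightarrow (3) \Rightarrow (2)$, and then deduce finiteness from Proposition~\ref{infq}. The implication $(2) \Rightarrow (1)$ is immediate. For the elementary cases, coprimality combined with Lemma~\ref{eBS} forces $(m,n)=\pm(1,1)$ when $G=\Z^2$ and $(m,n)=\pm(1,-1)$ when $G=K$; in each case $G\cong BS(m,n)$ and the circle-of-length-one representation (after possible admissible sign changes at the unique vertex) yields $X=m$, $Y=n$, so all three conditions are trivially equivalent. I assume henceforth that $G$ is non-elementary.

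For $(1) \Rightarrow (3)$, the key step is to force $\Gamma$ to be a circle. Unless $G$ is a solvable $BS(1,n')$ (a case handled directly via Lemma~\ref{eBS} and the circle of length one), Lemma~\ref{padun} lets me pick a reduced $\Gamma$ with no label equal to $\pm 1$, to which I apply Theorem~\ref{deBS}. In the segment case, $m=n$ together with coprimality forces $m=n=\pm 1$, so $Q$ or $R$ divides $\pm 1$, forcing some $q_i$ or $r_i$ to equal $\pm 1$, a contradiction. In the lollipop case, writing $(m,n)=k(QX,QY)$ (up to swap), coprimality gives $|kQ|\cdot (X\wedge Y)= m\wedge n=1$, whence $|Q|=1$; on a non-circle lollipop this forces some $q_i$ on the segment part to equal $\pm 1$, again a contradiction. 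Hence $\Gamma$ is a circle, $Q=1$, and $(X,Y)=\pm(m,n)$ or $\pm(n,m)$. Reversing the orientation of the circle swaps $X$ and $Y$, and any admissible sign change flips $X$ and $Y$ simultaneously, so I can arrange $X=m$ and $Y=n$.

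For $(3) \Rightarrow (2)$, the implication $(3) \Rightarrow (1)$ is immediate from Theorem~\ref{deBS} (circle case: $Q=1$, so $(m,n)=1\cdot(QX,QY)$). To produce the reverse epimorphism $G \twoheadrightarrow BS(m,n)$, note that $\beta(\Gamma)=1$ on a circle, and $\Delta_G$ kills each $b_j$ while sending $\tau$ to $X/Y=m/n$; thus the image of $\Delta_G$ is the cyclic subgroup of $\Q^*$ generated by $m/n$, already in lowest terms. Lemma~\ref{qbs} then yields an elliptic-friendly epimorphism $G\twoheadrightarrow BS(m,n)$, completing the epi-equivalence.

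Finally, finiteness follows at once from Proposition~\ref{infq}: coprimality of $m$ and $n$ is exactly condition~3(a), so $BS(m,n)$ has only finitely many GBS quotients up to isomorphism. The main obstacle in the argument is in $(1)\Rightarrow(3)$: the combined force of coprimality and Lemma~\ref{padun} is what rules out segment and non-circle lollipop representations; once $\Gamma$ is constrained to be a circle, everything else follows routinely from Theorem~\ref{deBS} and Lemma~\ref{qbs}.
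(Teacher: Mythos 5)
Your proof follows the same route as the paper's: $(2)\Rightarrow(1)\Rightarrow(3)\Rightarrow(2)$, with Theorem \ref{deBS} carrying both non-trivial implications, Lemma \ref{qbs} supplying the Baumslag-Solitar quotient, and Proposition \ref{infq} giving finiteness. Your treatment of $(1)\Rightarrow(3)$ and of the elementary cases is correct and in fact more detailed than the paper's one-line appeal to Theorem \ref{deBS}.

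There is, however, one step you skip that the paper does not. In $(3)\Rightarrow(1)$ you call the implication ``immediate from Theorem \ref{deBS}'', but Theorem \ref{deBS} has \emph{2-generation of $G$} as a hypothesis, and condition (3) does not assume it. A reduced circle can perfectly well represent a GBS group of rank $3$: take a circle of length $2$ whose two vertices carry labels $(2,2)$ and $(3,3)$, so that each vertex is by itself a proper plateau and $\mu(\Gamma)=2$. For such a group the pair $(b_0,\tau)$ does not generate, and the easy direction of Theorem \ref{deBS} (which rests on $G=\langle b_0,\tau\rangle$) does not apply. What rescues the argument is precisely the coprimality $X\wedge Y=m\wedge n=1$: a proper $p$-plateau in a circle is an arc whose two outgoing boundary labels are an $x$-label and a $y$-label both divisible by $p$, so $p$ would divide both $X$ and $Y$; hence there is no proper plateau and $\rk(G)=\beta(\Gamma)+\mu(\Gamma)=2$ by Theorem 1.1 of \cite{Le2}. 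This is exactly the parenthetical the paper inserts before invoking Theorem \ref{deBS}, and your write-up needs it too. A second, much smaller point: the statement of Lemma \ref{qbs} only promises an elliptic-friendly epimorphism onto \emph{some} Baumslag-Solitar group; to identify the target as $BS(m,n)$ you are implicitly using the construction in its proof (whose target is $BS(m,n)$ with $\frac mn$ a coprime generator of the image of $\Delta_G$). That is legitimate, but the cleaner citation is the paper's: any Baumslag-Solitar quotient of $G$ is a quotient of $BS(m,n)$, hence isomorphic to $BS(m,n)$ by Lemma \ref{eBS} and coprimality.
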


\begin{proof}
 Clearly 2 implies 1. It follows from Theorem \ref{deBS} that 1 implies 3. If 3 holds, $G$ is 2-generated 
 by Theorem 1.1 of \cite{Le2} (there is no proper plateau in $\Gamma$), and is
  a quotient of $BS(m,n)$ by the easy direction of Theorem \ref{deBS}. Lemma \ref{qbs} yields an  epimorphism from $G$  to some Baumslag-Solitar group. Since $m$ and $n$ are coprime, this group must be $BS(m,n)$ by Lemma \ref{eBS}, so 2 holds. The finiteness statement follows from Proposition \ref{infq}.
\end{proof}

\begin{rem}
When $m$ and $n $ are not coprime, and $BS(m,n)$ is not Hopfian, Corollary \ref{iee} will show that there exist infinitely many groups epi-equivalent to $BS(m,n)$, provided none of $ | m | , | n | $ is prime. 
\end{rem}

\begin{cor} \label{pointi} A GBS group $G$ is large (some finite index subgroup  has a nonabelian free quotient) if and only if $G$ is not a quotient of $BS(m,n)$ with $m,n$ coprime. 
\end{cor}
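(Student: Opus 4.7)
The proof combines Theorem~\ref{hop} with the characterization of large GBS groups established in \cite{Le} or \cite{Me}, which identifies the non-large GBS groups by a structural condition compatible with the numerical data of a reduced labelled graph. I take that characterization as a black box; once it is in hand, the corollary reduces to matching its hypotheses against the ``coprime quotient'' condition via Theorem~\ref{hop} and Theorem~\ref{deBS}.

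For the ``only if'' direction, suppose $G$ is a quotient of some $BS(m,n)$ with $m\wedge n=1$; I claim $G$ is not large. Largeness is inherited by overgroups through surjections: if $\pi\colon H\twoheadrightarrow G$ and $H_0\le G$ is a finite-index subgroup admitting a surjection $H_0\twoheadrightarrow F_2$, then $\pi^{-1}(H_0)\le H$ is of finite index and admits the composite surjection $\pi^{-1}(H_0)\twoheadrightarrow H_0\twoheadrightarrow F_2$. So it suffices to show that $BS(m,n)$ with coprime indices is itself not large. When one of $m,n$ is $\pm1$, this is the solvability of $BS(1,n)$; in the remaining cases it is an immediate consequence of the Le/Me characterization, since the coprimeness of the modulus obstructs the structural condition needed for largeness.

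For the converse direction, assume $G$ is not large. If $G$ is elementary it is $\Z$, $\Z^2$, or $K$, and each is a quotient of $BS(1,\pm 1)$; assume $G$ is non-elementary. The Le/Me characterization forces stringent restrictions on any reduced labelled graph $\Gamma$ representing $G$: combined with the analysis of 2-generated GBS groups in Section~\ref{2g}, these restrictions first force $G$ to be 2-generated (so $\Gamma$ is a segment, circle, or lollipop as in Figure~\ref{disp}), and then rule out the segment and non-circle lollipop cases, leaving $\Gamma$ a circle with $Q=1$ and $X\wedge Y=1$. By Theorem~\ref{deBS}, $G$ is then a quotient of $BS(X,Y)$, as required. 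The main obstacle is this last reduction step: while Theorem~\ref{hop} only directly handles 2-generated groups, one must verify that the Le/Me invariant of a non-large $G$ already precludes the higher-rank configurations (e.g.\ $\beta(\Gamma)\ge 2$ or multiple terminal vertices in $\Gamma$), and then carefully use Theorem~\ref{hop} to exclude the lollipop case with $Q>1$ or $X\wedge Y>1$. The bookkeeping between the Le/Me condition and the invariants $(Q,R,X,Y)$ is where all the work lies.
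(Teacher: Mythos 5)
There is a genuine gap, and it sits exactly where you flag it: the converse direction (``not large $\Rightarrow$ quotient of a coprime $BS(m,n)$'') is announced but not carried out. You treat the Le/Me result as a black box without ever stating it, and then describe a program of ``matching its hypotheses'' against $(Q,R,X,Y)$ via Theorem~\ref{deBS} and Theorem~\ref{hop}, ending with the admission that ``the bookkeeping \dots is where all the work lies.'' That bookkeeping is the proof; deferring it leaves nothing verifiable. Moreover, the shape you imagine for the black box is off: Theorem 6.7 of \cite{Le2} is not a numerical condition on an arbitrary representing graph that one must laboriously reconcile with segments, lollipops, and higher-rank configurations. It says directly that a GBS group is large if and only if it \emph{cannot} be represented by a labelled circle containing no proper plateau. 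Once that is stated, no case analysis is needed: a circle with no proper plateau automatically has rank $\beta+\mu=1+1=2$, and having no proper plateau is equivalent (via Remark~\ref{rr} and Lemma~\ref{copr}) to $X\wedge Y=1$; Proposition~\ref{prem} then identifies these groups exactly with the quotients of $BS(m,n)$ for $m,n$ coprime. That one-line translation --- condition 3 of Proposition~\ref{prem} with $m\wedge n=1$ equals ``representable by a circle with no proper plateau'' --- is the entire content of the paper's proof, and it is the step your write-up neither states nor performs. Theorem~\ref{hop} plays no role.

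Your ``only if'' direction is essentially sound: largeness does pass up along surjections, so it suffices that $BS(m,n)$ with $m\wedge n=1$ is not large; and this is indeed an instance of the cited characterization (the one-vertex circle with labels $m,n$ has no proper plateau precisely because no prime divides both). But even here the justification you give --- ``the coprimeness of the modulus obstructs the structural condition'' --- is empty until the structural condition is named. To repair the proof, state the characterization from \cite{Le2} (or \cite{Me}) precisely, prove the equivalence between ``circle with no proper plateau'' and ``reduced circle with $X\wedge Y=1$,'' and then cite Proposition~\ref{prem} for both directions at once; the detour through quotient-inheritance of largeness and through Theorem~\ref{deBS} case analysis becomes unnecessary.
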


\begin{proof} 
Groups satisfying the third condition of Proposition \ref{prem} with $m$ and $n$ coprime are those which may be represented by a circle containing no proper plateau, so the corollary follows from Theorem 6.7 of \cite{Le2} (see also \cite{Me}). 
 \end{proof}

\begin{thm} \label{hop}
A 2-generated GBS group $G$ represented by     a reduced labelled graph $\Gamma$ homeomorphic to a lollipop (possibly a circle) maps onto $BS(QX,QY)$ if and only if  $ q_i\wedge r_j =1$ for $0\le i<j<k$, and one of the following holds:
\begin{itemize}
 \item $X$ and $QY$ are coprime;
 \item $Y$ and $QX$ are coprime;
 \item $Q$ and $r_k$ are coprime, and there exists $i_0\in\{0,\dots,\ell-1\}$ such that, if  a prime $p$ divides both $QX$ and $QY$, then it   divides no   $x_i$
with $i>  i_0$ and no  $y_j$ with $j\le i_0$.
\end{itemize}
\end{thm}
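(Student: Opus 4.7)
The plan is to reduce the existence of an epimorphism $f\colon G\twoheadrightarrow BS(QX,QY)=\langle a,t\mid ta^{QX}t\m=a^{QY}\rangle$ to an integer Diophantine system. By Corollary~\ref{ef} any such $f$ is elliptic-friendly, and Lemma~\ref{modeg} then gives $f(\tau)\in t^{\pm 1}\cdot N$ where $N$ is the normal closure of $\langle a\rangle$ in $BS(QX,QY)$. After conjugation and, if necessary, swapping $t\leftrightarrow t^{-1}$ (which exchanges $QX$ with $QY$ and condition (i) with condition (ii)), one may write $f(a_i)=a^{\alpha_i}$, $f(b_j)=a^{\beta_j}$, $f(\tau)=t$. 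The four families of relations in the standard presentation of $G$ translate into the integer system
\begin{align*}
\alpha_i q_i &= \alpha_{i+1} r_{i+1} \quad (0\le i\le k-2), \\
\alpha_{k-1} q_{k-1} &= \beta_0 r_k, \\
\beta_j x_j &= \beta_{j+1} y_{j+1} \quad (0\le j\le\ell-2), \\
\beta_{\ell-1} x_{\ell-1} &= \lambda\, QX, \qquad \beta_0 y_\ell = \lambda\, QY,
\end{align*}
for some $\lambda\in\Z$, and $f$ is surjective iff the solution is primitive: $\gcd(\alpha_0,\ldots,\beta_{\ell-1})=1$. Over $\Q$ the system is one-parameter (every unknown is a rational multiple of $\alpha_0$, and one computes $\lambda=\alpha_0 y_\ell/(RY)$), so the question reduces entirely to $p$-adic valuations: choose $v_p(\alpha_0)$ large enough to clear every denominator, yet small enough that some $\alpha_n$ or $\beta_j$ has $v_p=0$.

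For the necessity of $q_i\wedge r_j=1$ with $0\le i<j<k$, suppose a prime $p$ violates it at $(i_0,j_0)$. Using Lemma~\ref{copr} to constrain which other labels are $p$-divisible (in particular $p$ cannot divide any $r_i$ with $i\le i_0$ or any $q_j$ with $j\ge j_0$), a direct valuation computation shows that the constraint $v_p(\lambda)\ge 0$ forces $v_p(\alpha_0)$ strictly above the minimal values predicted by the $\alpha_n$- and $\beta_j$-constraints, so $v_p(\alpha_n),v_p(\beta_j)\ge 1$ for every $n$ and $j$, contradicting primitivity. For the necessity of at least one of (i)-(iii), assume all three fail and pick a prime $p\mid QX\wedge QY$; failure of (i), (ii), and (iii) (the latter either via $p\mid Q\wedge r_k$ or via the absence of a valid position index $i_0$) produces, by a parallel valuation computation tracing $p$ through the circle and segment recursions, a lower bound on $v_p(\alpha_0)$ that keeps every $v_p(\alpha_n),v_p(\beta_j)\ge 1$.

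For sufficiency, in case $X\wedge QY=1$ set $\lambda=1$, so $\beta_0=QY/y_\ell$ and $\beta_{\ell-1}=QX/x_{\ell-1}$ are integers (Lemma~\ref{copr}); the circle recursion extends to integer $\beta_j$'s using $X\wedge QY=1$ and Lemma~\ref{copr}, the segment recursion to integer $\alpha_i$'s using Lemma~\ref{wt2} together with the segment coprimality hypothesis, and primitivity follows once more from $X\wedge QY=1$. Case $Y\wedge QX=1$ is symmetric via the $t\leftrightarrow t^{-1}$ swap. Case (iii) is the most delicate: $Q\wedge r_k=1$ keeps the fusion at $v_k=w_0$ clean, and the position index $i_0$ lets one split the circle into an $x$-arc (indices $j>i_0$) and a $y$-arc (indices $j\le i_0$) on which any prime $p\mid\gcd(QX,QY)$ appears on only one arc, so $p$ can be absorbed into $\alpha_0$ without being forced into every $\beta_j$. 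The main obstacle is precisely this third case: one must track, for each prime $p\mid\gcd(QX,QY)$ separately, how $p$ propagates through the recursion, and verify that the position condition is exactly the combinatorial condition making a primitive integral solution exist (and, in the necessity argument, that its failure makes one impossible).
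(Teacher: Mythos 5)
Your reduction rests on the normalization ``$f(a_i)=a^{\alpha_i}$, $f(b_j)=a^{\beta_j}$, $f(\tau)=t$'', and this is where the argument breaks. Lemma \ref{koc} only gives that each elliptic generator maps to a conjugate of a power of $a$; after conjugating $f$ you may place \emph{one} of them (say $f(a_0)$, via Lemma \ref{lesp}) inside $\langle a\rangle$, but the others need not lie in the same maximal cyclic subgroup, and $f(\tau)$ is only constrained to lie in $t^{\pm1}N$. Elements of $\langle a\rangle$ have roots outside $\langle a\rangle$ in $BS(QX,QY)$, so the relations do not force your ansatz. Concretely, take $k=\ell=1$, $Q=R=2$, $X=3$, $Y=2$, so $G=\langle a_0,b_0,\tau\mid a_0^2=b_0^2,\ \tau b_0^3\tau\m=b_0^2\rangle$ (reduced, 2-generated, and $X\wedge QY=3\wedge 4=1$, so the theorem asserts $G\twoheadrightarrow BS(6,4)$). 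Your system forces $\alpha_0=\beta_0=2\lambda$, so every homomorphism of your restricted form has image inside $\langle t,a^2\rangle$, which is a proper subgroup of $BS(6,4)$ (kill $t$ and reduce $a$ mod $2$). The epimorphism that does exist sends $b_0$ to $t\m a^2t$, a conjugate of $a^2$ that is \emph{not} a power of $a$; this is exactly the shape of the map constructed in the paper ($b_0\mapsto t^\alpha a^{\tilde RQ}t^{-\alpha}$ with $R\tilde R=X^\alpha Y^\beta$). So your sufficiency argument fails already in case (i), and your surjectivity criterion ``$\gcd(\alpha_i,\beta_j)=1$'' is also not an iff (e.g.\ $\langle t,a^2\rangle=BS(2,3)$).

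The same defect undermines the necessity direction: a valuation analysis of your Diophantine system only excludes epimorphisms of the restricted form, not arbitrary ones, so it cannot establish that the conditions are necessary. What the paper does instead is structural: it shows that for any epimorphism $f$ the image of a suitable elliptic element ($f(a_0^{q_0})$, or an incident edge group at $w_0$) fixes a \emph{unique} point of the Bass-Serre tree of $BS(QX,QY)$, whence $f$ factors through the contraction (Definition \ref{hoc}) of a specific edge; if the two labels on that edge are not coprime the contracted group is a strictly smaller $BS$-type quotient that cannot surject onto $BS(QX,QY)$ by Lemma \ref{eBS}. Sufficiency is handled by explicit conjugated formulas in the base case $k=\ell=1$, by displacement moves around the circle for general $\ell$ (Proposition \ref{cer}), and by induction on $k$ via contractions. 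If you want to salvage a computational approach, you would have to enlarge your ansatz to allow $f(b_j)=t^{\gamma_j}a^{\beta_j}t^{-\gamma_j}$ (and $f(\tau)\in tN$), at which point the bookkeeping essentially reproduces the paper's contraction/displacement argument.
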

 
 Note that $ q_i\wedge r_j =1$ is required only for $j<k$; in particular, there is no condition when $k\le1$. Also note that $i_0$ always exists when $\ell=1$.
 
 We derive two corollaries before giving the proof. 
 
\begin{cor} \label{epi}
A GBS group is epi-equivalent to a Baumslag-Solitar group if and only if it has rank 2 and may be represented by a graph $\Gamma$ satisfying the conditions of Theorem \ref{hop}.
\end{cor}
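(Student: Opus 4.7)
The plan is to show that Corollary \ref{epi} is a direct consequence of Theorem \ref{deBS}, Theorem \ref{hop}, and Lemma \ref{qbs}: once the structural classification of 2-generated GBS groups from Section \ref{2g} is combined with the ``if and only if'' of Theorem \ref{hop}, both directions fall out essentially immediately. I will treat the two implications in turn, leaving the trivial elementary cases aside.

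For the ``if'' direction, assume $G$ has rank $2$ and is represented by a reduced labelled graph $\Gamma$ satisfying the hypotheses of Theorem \ref{hop}. By those hypotheses $\Gamma$ is a lollipop (possibly a circle), so Theorem \ref{deBS} produces an epimorphism $BS(QX,QY)\twoheadrightarrow G$, while Theorem \ref{hop} itself gives an epimorphism $G\twoheadrightarrow BS(QX,QY)$. Hence $G$ and $BS(QX,QY)$ are each quotients of the other, so they are epi-equivalent.

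For the ``only if'' direction, suppose $G$ is epi-equivalent to some $BS(m,n)$. Since $G$ is a quotient of the 2-generated group $BS(m,n)$, it is itself 2-generated; moreover $G$ is non-cyclic (otherwise it would only admit cyclic quotients, whereas $BS(m,n)$ is not cyclic), so $\rk(G)=2$. Next, $G$ surjects onto $BS(m,n)$; assuming $BS(m,n)$ is non-elementary, Lemma \ref{qbs} forces $\beta(G)\ge 1$. Combined with the topological classification of reduced labelled graphs representing 2-generated GBS groups (Section \ref{2g}), this means $G$ may be represented by a reduced $\Gamma$ homeomorphic to a lollipop or a circle. Let $Q,R,X,Y$ be the numbers associated to this $\Gamma$.

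It remains to check that $\Gamma$ satisfies the conditions of Theorem \ref{hop}. Since $BS(m,n)$ surjects onto $G$, Theorem \ref{deBS} says that $(m,n)$ is an integral multiple of $(QX,QY)$ or $(QY,QX)$; Lemma \ref{eBS} then yields an epimorphism $BS(m,n)\twoheadrightarrow BS(QX,QY)$. Composing with the given epimorphism $G\twoheadrightarrow BS(m,n)$, we obtain $G\twoheadrightarrow BS(QX,QY)$. Applying the ``only if'' direction of Theorem \ref{hop} to this epimorphism shows that $\Gamma$ satisfies the listed conditions, as required. The elementary edge cases $BS(m,n)=\Z^2$ or $K$ are Hopfian, so epi-equivalence forces $G\simeq BS(m,n)$, and one checks directly that $\Z^2$ (and $K$, in the form $BS(1,-1)$) admits a representation fitting the framework of Theorem \ref{hop}. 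The only non-mechanical step is the invocation of Theorem \ref{hop} itself; all remaining work is bookkeeping between the representation of $G$ and the resulting Baumslag-Solitar group $BS(QX,QY)$.
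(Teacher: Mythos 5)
Your proposal is correct and follows essentially the same route as the paper: both directions reduce to combining Theorem \ref{deBS}, Theorem \ref{hop}, and Lemma \ref{qbs}, with the ``only if'' direction first pinning $\Gamma$ down as a lollipop and then transferring the epimorphism $G\twoheadrightarrow BS(m,n)$ to one onto $BS(QX,QY)$. Your use of Lemma \ref{eBS} to compose $G\twoheadrightarrow BS(m,n)\twoheadrightarrow BS(QX,QY)$ is just a slightly more explicit version of the paper's terse observation that $BS(m,n)\simeq BS(QX,QY)$.
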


\begin{proof}
 Suppose that $G$ is  epi-equivalent to $BS(m,n)$ and is 
represented by a reduced labelled graph $\Gamma$. Then $G$ has rank 2, and $\Gamma$ is a lollipop by Lemma \ref{qbs} (it cannot be a segment). Moreover $BS(m,n)$ is isomorphic to $BS(QX,QY)$ by Theorem \ref{deBS}, and $\Gamma$ satisfies the conditions of Theorem \ref{hop}. Conversely, $G$ is epi-equivalent to $BS(QX,QY)$ if $\Gamma$ is as in the theorem.
\end{proof}

\begin{cor} \label{iee}
 Suppose that $BS(m,n)$ is not Hopfian. There exist infinitely many pairwise non-isomorphic GBS groups epi-equivalent to $BS(m,n)$ if and only if $ | m | $ is not prime, $ | n | $ is not prime, and $  m\wedge n \ne1$.
\end{cor}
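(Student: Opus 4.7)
The plan is to treat the two directions separately.

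For necessity, I would show that failure of any one of the three conditions leaves only finitely many isomorphism classes of groups epi-equivalent to $BS(m,n)$. If $m\wedge n=1$, this is Proposition \ref{prem}. If instead $|m|$ is prime (the case $|n|$ prime being symmetric), non-Hopficity forces $m\ne\pm1$ and makes the prime sets of $m$ and $n$ distinct, so in particular $m\ne n$; Proposition \ref{quis} then implies that every non-solvable GBS quotient of $BS(m,n)$ is isomorphic to $BS(m,n)$. Since $|m|,|n|\ge 2$, the group $BS(m,n)$ is non-solvable, so any $G$ epi-equivalent to it surjects onto the non-solvable $BS(m,n)$ (forcing $G$ to be non-solvable) and is a quotient of $BS(m,n)$; thus $G\cong BS(m,n)$, giving only one isomorphism class.

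For sufficiency, assume $|m|,|n|$ are not prime, $m\wedge n\ne1$, and $BS(m,n)$ is non-Hopfian. Non-Hopficity excludes $m=\pm n$, so we may assume $|m|<|n|$. I would choose an integer $Q\ge 2$ dividing $m\wedge n$ with $|m/Q|,|n/Q|\ge 2$: set $Q=m\wedge n$ if $|m|\nmid|n|$ (then $Q<|m|<|n|$ automatically); otherwise $|m|$ divides $|n|$ and we set $Q$ to any proper divisor of $|m|$ with $Q\ge 2$, which exists since $|m|$ is not prime (and then $|n/Q|\ge 2|m/Q|\ge 4$). Write $X=m/Q$, $Y=n/Q$, pick a prime $p_0\nmid mn$, and for $N\ge1$ define
$$G_N := \langle a,b,t\mid a^Q=b^{p_0^N},\ t\, b^X\, t\m=b^Y\rangle,$$
represented by the lollipop of Figure \ref{ptit} with labels $(Q,p_0^N,X,Y)$; this graph is reduced because no label is $\pm1$. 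Theorem \ref{deBS} immediately gives that $G_N$ is a quotient of $BS(QX,QY)=BS(m,n)$, while Theorem \ref{hop}'s third criterion supplies a surjection $G_N\twoheadrightarrow BS(m,n)$: with $k=\ell=1$ the coprimality condition on $q_i\wedge r_j$ is vacuous, the choice $i_0=0$ trivially satisfies the two empty conditions on the $x_i$, $y_j$, and $Q\wedge p_0^N=1$ by construction. Each $G_N$ is therefore epi-equivalent to $BS(m,n)$.

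The main obstacle is to verify that the $G_N$ are pairwise non-isomorphic. Elementary invariants do not suffice: a Smith normal form computation on the presentation matrix shows that the abelianization of $G_N$ depends only on $\gcd(Q,X-Y)$ and $|Q(X-Y)|$, hence is independent of $N$. Following the pattern used in the proof of Proposition \ref{infq}, I would appeal to the Clay--Forester rigidity theorem \cite{CFWh}: two reduced labelled graphs represent isomorphic non-elementary GBS groups only if they are connected by a finite sequence of admissible sign changes and slide, induction, and $\cala^{\pm1}$-moves, and a direct inspection of the admissible moves applied to our specific lollipop shows that none can alter the prime-power label $p_0^N$ to $p_0^M$ with $M\ne N$ while leaving $Q$, $X$, $Y$ fixed.
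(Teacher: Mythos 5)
Your necessity direction is fine (it is a mild variant of the paper's, which simply invokes Proposition \ref{infq} to cover both the coprime case and the prime case at once, where you use Propositions \ref{prem} and \ref{quis}). The sufficiency direction, however, has a genuine gap: your groups $G_N$ are not $2$-generated, hence are not quotients of $BS(m,n)$ at all. Because you chose $p_0\nmid mn$, the prime $p_0$ divides the label $R=p_0^N$ but divides neither $X$ nor $Y$; this violates the last assertion of Lemma \ref{copr}, which is a necessary condition for $2$-generation. Concretely, the subgraph consisting of $w_0$ together with the loop is an \emph{interior} $p_0$-plateau, and since it is vertex-disjoint from the plateau $\{v_0\}$, the rank formula gives $\rk(G_N)=\beta(\Gamma)+\mu(\Gamma)=1+2=3$. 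Consequently Theorems \ref{deBS} and \ref{hop}, both stated only for $2$-generated GBS groups, do not apply to $G_N$, and a rank-$3$ group cannot be a quotient of the $2$-generated group $BS(m,n)$.

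The fix --- and this is exactly what the paper does via the groups $G_N$, $H_N$ from the proof of Proposition \ref{infq} --- is to choose the prime defining $R$ so that it divides exactly one of $X$, $Y$ and does not divide $Q$. This is where non-Hopficity is used in an essential way a second time: it guarantees that $m$ and $n$ do not have the same prime divisors, so (after swapping) there is a prime $p$ with $p\mid n$ and $p\nmid m$; taking $R=p^N$ then leaves no interior plateau (so $G_N$ has rank $2$ and is a quotient of $BS(m,n)$ by Theorem \ref{deBS}), and gives $Q\wedge R=1$ since $Q\mid m$, so the third condition of Theorem \ref{hop} supplies the surjection back onto $BS(m,n)$. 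Your argument uses non-Hopficity only to rule out $m=\pm n$ and misses this crucial second use. Your remaining points --- the choice of $Q,X,Y$ and the appeal to \cite{CFWh} for pairwise non-isomorphism --- are in line with the paper's argument and would go through once the prime is chosen correctly.
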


\begin{proof}
 If $m$ or $n$ is prime, or if $m,n$ are coprime, $BS(m,n)$ has finitely many GBS quotients up to isomorphism by Proposition \ref{infq}. 
Otherwise, non-Hopficity implies that  $m$ and $n$ do not have the same prime divisors \cite{CL}.  We may assume that there is a prime $p$ dividing $n$ but not $m$.  Since   $m\ne\pm n$, and $ | m | , | n | $ cannot be powers of  the same  prime,   $BS(m,n)$ has  infinitely many quotients $G_N$ or $H_N$ constructed in the proof of Proposition \ref{infq}. By the third case of Theorem \ref{hop} ($Q$ and $r_k$ coprime),  the groups $G_N$ are epi-equivalent to $BS(m,n)$ if we construct them using     $p$  as above (not dividing $m$). The  groups $H_N$ are epi-equivalent to $BS(m,n)$  if we choose $\delta=p$, since   $\beta$ and $\delta$ are then coprime.
\end{proof}
 
The remainder of this section is devoted to the proof of Theorem \ref{hop}.

\subsection{Contraction and displacement moves}

We first  describe two general ways of constructing   GBS quotients $G'$ of a GBS group $G$ (see Figures \ref{con} and \ref{dis}). Let $\Gamma$ be a labelled graph representing $G$. 

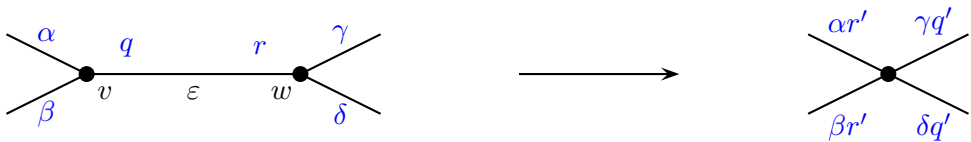
\begin{figure}[h]
\begin{center}
\begin{pspicture}(100,105) 
\rput   (0,50){
\rput(-100,0)
{
   \psline  (0,0)(80,0)   
    \psline  (0,0)(-30,15)
     \psline  (0,0)(-30,-15)
  \psline(80,0)(110,15)
    \psline(80,0)(110,-15)

 \pscircle*( 0,0) {3}
   \pscircle*( 80,0) {3} 
      
   \rput(7,-7) {$v$}
    \rput(73,-7) {$w$}
     \rput(40,-7) {$\varepsilon$}
      {\blue
   \rput(-15,15) {$ \alpha$}
     \rput(-15,-15) {$ \beta$}
 
   \rput(95,15) {$ \gamma$}
     \rput(95,-15) {$\delta$}
 
    \rput(15,10) {$q$}
      \rput(65,10) {$ r$}
             
   }      

 }

\rput(200,0)
{

 \psline  (0,0)(-30,15)
 \psline  (0,0)(-30,-15)
 \psline  (0,0)(30,15)
 \psline  (0,0)(30,-15)

 \pscircle*( 0,0) {3}
 
    {\blue

    \rput(-15,20) {$ \alpha r'$}
     \rput(-15,-20) {$ \beta r'$}
 
   \rput(17,20) {$  \gamma q'$}
     \rput(17,-20) {$ \delta q'$}

   }      

}   
        
   \rput (60,0){ 
\psline[arrowsize=5pt]  {->}(0,0)(60,0)
} 
 }
\end{pspicture}
\end{center}
\caption{contraction move} \label{con}
\end{figure}

\begin{dfn} [Contraction move \cite{DRT}] \label{hoc} Consider an edge $\varepsilon= vw$ of $\Gamma$ with labels $q$ and $r$ (see Figure \ref{con}). The group associated to $\varepsilon$   is  $H=\langle a,b\mid a^q=b^r\rangle$. We may map it onto $\Z$ by sending $a$ to $\frac r{q\wedge r}$ and $b$ to $\frac q{q\wedge r}$ (this amounts to making $H$ abelian, and dividing by torsion if $q\wedge r>1$). Being injective on $\langle a \rangle$ and on $\langle  b\rangle$, this map extends to an epimorphism  
from $G$ to  a GBS group $G'$. In the process the edge $\varepsilon$  gets contracted to a point;   labels near $v$ get multiplied by $r'=\frac r{q\wedge r}$, labels near $w$ by $q'=\frac q{q\wedge r}$. 
\end{dfn}

  \begin{rem}\label{nh} When $q$ or $r$ equals $\pm1$, this is an elementary collapse \cite{Fodef} which does not change $G$ (see Figure \ref{col}). 
  But if none of $q,r$ equals $\pm1$, the epimorphism   $G\twoheadrightarrow  G'$ produced by a contraction move is not an isomorphism because $H$ is not cyclic. 
  This   gives a simple way of showing     the non-Hopficity of Baumslag-Solitar groups (see Figure \ref{hopf}). If $BS(m,n)$ is not Hopfian, then (up to swapping $m$ and  $n$) $n\ne\pm1$ and 
we can write  $m=pm'$ with $p$ a prime not dividing $n$. Collapsing the edge $e$ of Figure \ref{hopf} induces a group isomorphism $\varphi_1$, while contracting $e'$ induces a non-injective epimorphism $\varphi_2$.
\end{rem}

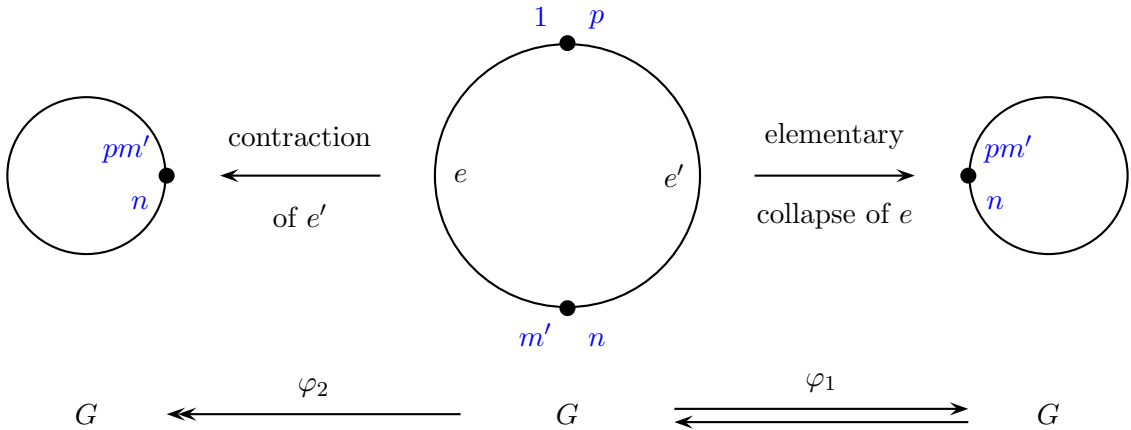
\begin{figure}[h]
\begin{center}
\begin{pspicture}(100,200)
 
\rput  (0,100)   {
 \pscircle ( 50,0) {50} 
  \pscircle (-130,0) {30} 
   \pscircle ( 230,0) {30} 
   
\rput  (0, -90)   {
  \rput ( 50,0) {$G$} 
  \rput (-130,0) {$G$} 
   \rput ( 230,0) {$G$} 
   
\rput(0,2)
{   \psline[arrowsize=5pt]  {->}(90,0)(200,0)
   \psline[arrowsize=5pt]  {<-}(90,-5)(200,-5)  
  \rput(145,10){$\varphi_1$}
   }
   
    \psline[arrowsize=5pt]  {->>}(10,0)(-100,0)
       \rput(-45,10){$\varphi_2$}
}
   
   \rput(-50,15){contraction}
\rput(-50,-15){of $e'$}
   \rput (-80,0){ 
\psline[arrowsize=5pt]  {->}(60,0)(0,0)
} 

   \rput(150,15){elementary}
\rput(150,-15){collapse of $e$}
   \rput (120,0){ 
\psline[arrowsize=5pt]  {->}(0,0)(60,0)
} 

 \pscircle*( 50,50) {3}
  \pscircle*( 50,-50) {3}
 \pscircle*( -100,0) {3}

   \pscircle*( 200,0) {3}
   
  \rput(10,0){$e$} 
    \rput(90,0){$e'$} 
    
    {\blue
   \rput(40,60) {$ 1$}
    \rput(61,59) {$ p$}
  \rput(38,-60) {$ m'$}
    \rput(61,-62) {$ n$}
 
    \rput(215,10) {$ pm'$}
        \rput(210,-10) {$ n$}
      \rput(-115,10) {$ pm'$}
        \rput(-110,-10) {$n$}
      
   }      
           Ê}
        \end{pspicture}
\end{center}
\caption{non-Hopficity of $BS(pm',n)$ when $p,n$ are $>1$ and coprime; $\varphi_1$ is an isomorphism, $\varphi_2$ is not }
\label{hopf}
\end{figure}

\begin{dfn} [Displacement move]\label{hod}
Consider an edge  $\varepsilon=vw$ with labels $q$ and $rs$ (see Figure \ref{dis}). We   replace it by two edges $\varepsilon'$ and $\varepsilon''$, with  labels $q,r$ and $1,s$ respectively (this  is the reverse of an  elementary collapse, it  does not change $G$). We then contract the edge $\varepsilon'$.   If $q$ and $r$ are coprime, the  construction simply ``moves'' $r$:  the label  of $\varepsilon$ near $w$ is divided by $r$, and   labels of edges incident to $v$ are multiplied by $r$. 
\end{dfn}

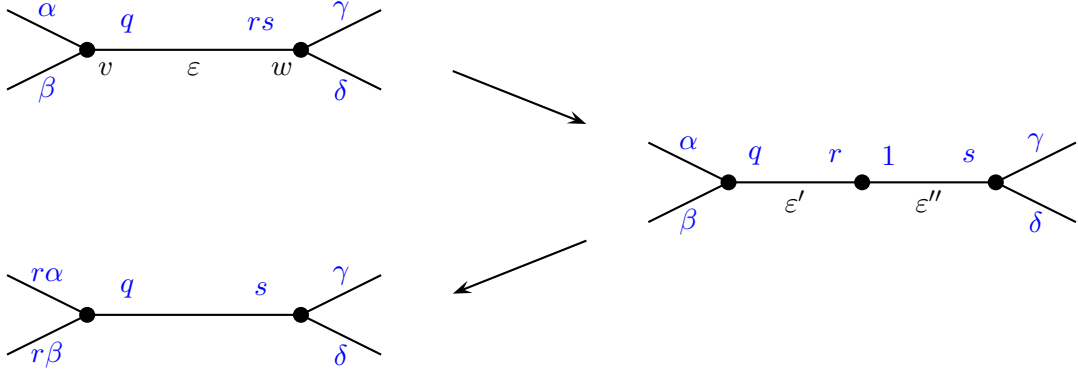
\begin{figure}[h]
\begin{center}
\begin{pspicture}(100,155) 
\rput   (0,120){
\rput(-120,0)
{
   \psline  (0,0)(80,0)   
    \psline  (0,0)(-30,15)
     \psline  (0,0)(-30,-15)
  \psline(80,0)(110,15)
    \psline(80,0)(110,-15)

 \pscircle*( 0,0) {3}
   \pscircle*( 80,0) {3} 
     
   \rput(7,-7) {$v$}
    \rput(73,-7) {$w$}
     \rput(40,-7) {$\varepsilon$}
      {\blue
   \rput(-15,15) {$ \alpha$}
     \rput(-15,-15) {$ \beta$}
 
   \rput(95,15) {$ \gamma$}
     \rput(95,-15) {$\delta$}
 
    \rput(15,10) {$q$}
      \rput(65,10) {$ rs$}      
   }      

 }
 
 \rput(-120,-100)
{
   \psline  (0,0)(80,0)   
    \psline  (0,0)(-30,15)
     \psline  (0,0)(-30,-15)
  \psline(80,0)(110,15)
    \psline(80,0)(110,-15)

 \pscircle*( 0,0) {3}
   \pscircle*( 80,0) {3} 
         {\blue
   \rput(-15,15) {$ r\alpha$}
     \rput(-15,-15) {$r \beta$}
 
   \rput(95,15) {$ \gamma$}
     \rput(95,-15) {$\delta$}
 
    \rput(15,10) {$q$}
      \rput(65,10) {$  s$}      
   }      

 }
  
  \rput(120,-50)
{
   \psline  (0,0)(100,0)   
    \psline  (0,0)(-30,15)
     \psline  (0,0)(-30,-15)
  \psline(100,0)(130,15)
    \psline(100,0)(130,-15)

 \pscircle*( 0,0) {3}
   \pscircle*( 100,0) {3} 
    \pscircle*(50,0) {3}
     \rput(25,-7) {$\varepsilon'$}
        \rput(75,-7) {$\varepsilon''$}
      {\blue
   \rput(-15,15) {$  \alpha$}
     \rput(-15,-15) {$  \beta$}
 
   \rput(115,15) {$ \gamma$}
     \rput(115,-15) {$\delta$}
 
    \rput(10,10) {$q$}
      \rput(90,10) {$  s$}   
       \rput(40,10) {$  r$}    
        \rput(60,10) {$  1$}       
   }      

 }
  \rput (60,0){ 
\psline[arrowsize=5pt]  {->}(-45,-8)(5,-28)
} 

   \rput (60,0){ 
\psline[arrowsize=5pt]  {->}(5,-72)(-45,-92)
} 
 }
\end{pspicture}
\end{center}
\caption{displacement move with $q,r$ coprime} \label{dis}
\end{figure}

\subsection{The case of a circle}

We   prove Theorem \ref{hop} when $\Gamma$ is a circle. We rephrase it as follows:

\begin{prop} \label{cer}
A 2-generated GBS group $G$ represented by     a reduced labelled graph $\Gamma$ homeomorphic to a circle maps onto $BS(X,Y)$ if and only if  there exists $i_0\in\{0,\dots,\ell-1\}$ such that, if  a prime $p$ divides both $X$ and $Y$, then it  divides no $x_i$
with $i>  i_0$ and no  $y_j$ with $j\le i_0$.
\end{prop}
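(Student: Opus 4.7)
The plan is to prove the two directions separately. For the ``if'' direction, I will use a sequence of displacement and elementary-collapse moves to reduce the labelled graph $\Gamma$ to the one-loop graph representing $BS(X,Y)$. After cyclically relabelling so that $i_0=\ell-1$, the hypothesis asserts that every $y_j$ with $1\le j\le\ell-1$ is coprime to $X$ (any prime dividing such a $y_j$ divides $Y$ but, by hypothesis, cannot divide $X$). I process the edges in order $j=1,2,\dots,\ell-1$. At step $j$, the edge joining the ``base vertex'' (into which $w_0,\dots,w_{j-1}$ have been absorbed) to $w_j$ carries label $x_0x_1\cdots x_{j-1}$ on the base side and $y_j$ on the $w_j$ side; since these are coprime, a displacement move (Definition~\ref{hod}) transfers $y_j$ across the edge, and the resulting $\pm1$ label allows an elementary collapse that merges $w_j$ into the base. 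A bookkeeping check shows that at each step the HNN edge's base-side label and the ``next'' edge's $w_j$-side label acquire the appropriate factors $y_j$ and $x_0\cdots x_{j-1}$ respectively, all other labels being unchanged. After $\ell-1$ such pairs of moves the graph becomes a single loop labelled $(X,Y)$, giving the desired epimorphism $G\twoheadrightarrow BS(X,Y)$.

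For the ``only if'' direction, assume $\psi\colon G\twoheadrightarrow BS(X,Y)$ is an epimorphism. By Corollary~\ref{ef}, $\psi$ is elliptic-friendly, so each $\psi(b_j)$ is a non-trivial elliptic element of $BS(X,Y)$, conjugate to $a^{n_j}$ with $n_j\ne0$; write $\psi(b_j)=g_ja^{n_j}g_j^{-1}$. The edge relations of $G$ translate into the conjugacies $(g_{j+1}^{-1}g_j)\,a^{n_jx_j}\,(g_{j+1}^{-1}g_j)^{-1}=a^{n_{j+1}y_{j+1}}$ for $0\le j\le\ell-2$ together with an analogous HNN identity involving $\psi(\tau)$. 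In the Bass-Serre tree $T$ of $BS(X,Y)$, each $\psi(b_j)$ fixes the vertex $g_jv_0$, the edges of $\Gamma$ lift to geodesic segments of $T$ linking these fixed points, and the whole circle wraps exactly once around the one-edge quotient graph $T/BS(X,Y)$ (the degree is forced to be $\pm1$ because $\psi$ must be surjective on the abelianization of $\pi_{1}^{top}$). For each common prime $p$ of $X$ and $Y$, I would then locate the unique position on the circle at which the lifted loop traverses the distinguished mod-$p$ edge of $T/BS(X,Y)$, extracting a crossing index $i_0(p)\in\{0,\dots,\ell-1\}$ and forcing no $x_i$ with $i>i_0(p)$ and no $y_j$ with $j\le i_0(p)$ to be divisible by $p$. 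The degree-$\pm1$ constraint on the immersion $\Gamma\to T/BS(X,Y)$ then forces a single $i_0$ to serve for all common primes simultaneously.

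The main obstacle will be the ``only if'' direction, specifically extracting the crossing index $i_0(p)$ from the configuration of fixed points $g_jv_0$ and conjugators $g_{j+1}^{-1}g_j$ in $T$ and showing that it does not depend on the choice of common prime $p$. I expect the cleanest tool to be the immersions of graphs of groups \emph{à la} Bass used in Section~\ref{bss}: realize $\psi$ as a combinatorial immersion of $\Gamma$ onto the one-edge graph of $BS(X,Y)$ and read off the label-divisibility constraints from the folded preimages of the two edge-ends, with the single-crossing property following from the fact that the circle of $\Gamma$ immerses with winding number $\pm1$.
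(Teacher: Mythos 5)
Your ``if'' direction contains a genuine error: the normalization ``cyclically relabel so that $i_0=\ell-1$'' is not available. The hypothesis of the proposition is anchored at the distinguished vertex $w_0$ of Convention \ref{bc}: on each edge other than $w_{i_0}w_{i_0+1}$ it constrains exactly one of the two labels, namely the one at the end \emph{away} from $w_0$ along the arc avoiding $w_{i_0}w_{i_0+1}$, and it says nothing about $x_0,\dots,x_{i_0}$ and $y_{i_0+1},\dots,y_\ell$. A cyclic shift moves $w_0$ and therefore changes which labels are constrained, so in general no relabelling converts the hypothesis into ``no bilateral prime divides $y_1,\dots,y_{\ell-1}$''. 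Concretely, take $\ell=3$ with $x_0=p$, $y_1=r$, $x_1=s$, $y_2=p$, $x_2=t$, $y_3=u$ (distinct primes): the only bilateral prime is $p$, the hypothesis holds with $i_0=1$, and $G$ does map onto $BS(X,Y)=BS(pst,rpu)$; but your sweep at step $j=2$ needs $y_2\wedge x_0x_1=p\wedge ps=1$, which fails, and your ``normalized'' hypothesis ($y_2$ coprime to $X$) is false. What is true under the hypothesis is $y_j\wedge X=1$ for $j\le i_0$ and $x_i\wedge Y=1$ for $i>i_0$, so the sweep must go from $w_0$ \emph{in both directions} towards the free edge $w_{i_0}w_{i_0+1}$. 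This is what the paper does, in the equivalent form: first push all \emph{unilateral} primes (dividing exactly one of $X,Y$) into $x_0$ and $y_\ell$ by displacement moves; the hypothesis then forces every $x_i$ with $i>i_0$ and every $y_j$ with $j\le i_0$ of the new graph to be $\pm1$, and the circle collapses to a single loop labelled $(X,Y)$.

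The ``only if'' direction is a sketch whose central steps are missing. First, extracting a crossing index $i_0(p)$ for each bilateral prime and then showing all the $i_0(p)$ coincide is exactly the hard part, and you give no argument for either; note that ``for each bilateral $p$ there exists $i_0(p)$'' is strictly weaker than the stated condition, so the unification is real mathematical content, not bookkeeping. Second, you make no use of the fact (special to circles) that pushing unilateral primes around the loop is \emph{reversible}, so that the modified group $G'$ is epi-equivalent to $G$; the paper's converse runs entirely on the reduced modified graph $\Gamma''$ and is a descent: if $\Gamma''$ still has more than one edge, the image of $b_0$ under any epimorphism onto $BS(X,Y)$ is $a^{r}$ with $r$ prime to $X\wedge Y$, one of the two edge groups at $w_0$ is shown to fix a unique point in the Bass--Serre tree of $BS(X,Y)$, so the epimorphism factors through the contraction of an edge whose two labels share a prime; the contracted group is a $BS(X'',Y'')$-type quotient with $|X''|<|X|$, contradicting Lemma \ref{eBS}. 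Without some substitute for this factorization-and-descent mechanism, your Bass--Serre tree picture does not by itself yield the divisibility constraints on the individual labels $x_i,y_j$ of $\Gamma$.
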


As usual, we assume Convention \ref{bc}:  the  vertex $w_0$ belongs to every plateau, so $G$ is generated by $b_0$ and $\tau$. 

\begin{example} The group $G =\langle a,b,t\mid a^{2\beta}=b^2, tb^\gamma t\m=a^{2\alpha}\rangle$ represented by $\Gamma$ (see Figure \ref{bil}) is 2-generated if and only if $\gamma$ is odd, so let us assume this. The condition of the proposition holds (with $i_0=0$) if and only if no prime divisor of $X\wedge Y=2\beta\gamma\wedge 4\alpha$ divides $x_1=\gamma$; this is equivalent to $\gamma\wedge \alpha=1$.

Write $\gamma=\gamma_1\gamma_2$ with $\gamma_1\wedge \alpha=1$ and $ | \gamma_1 | $ maximal. The proof of the proposition will show that  $G$ is epi-equivalent to the group $G'$ represented by $\Gamma'$; moreover, $G'$ maps onto $BS(X,Y)=BS(2\beta\gamma,4\alpha)$ if and only if $\gamma_2=\pm1$, or equivalently $\gamma\wedge \alpha=1$.
\end{example}

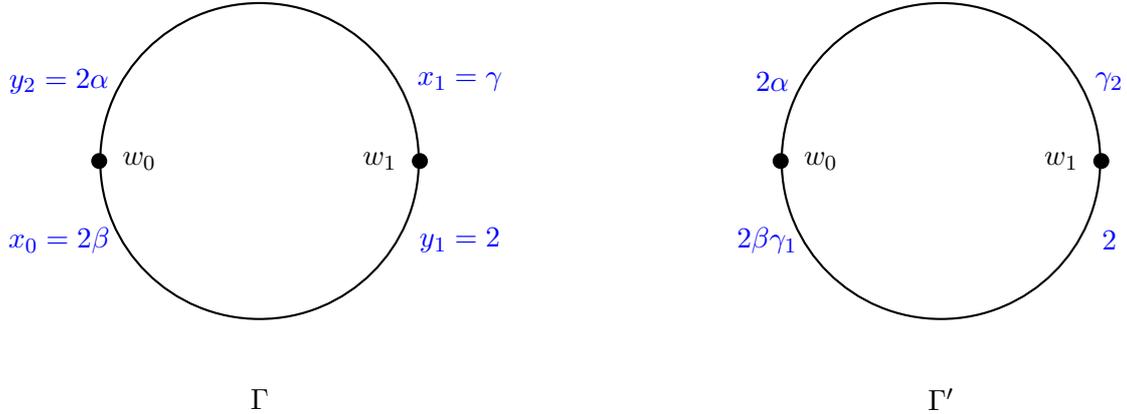
\begin{figure}[h]
\begin{center}
\begin{pspicture}(100,180)

\rput  (15,100)   {
 \pscircle ( 180,0) {60} 

 \pscircle*( 240,0) {3}
 
   \pscircle*( 120,0) {3}
   
   \rput(135,0){$w_0$}
      \rput(225,0){$w_1$}
      
      \rput(180,-90){$\Gamma'$}
      
    {\blue
      \rput(117,30) {$ 2\alpha$}
        \rput(115,-30) {$2\beta\gamma_1$}
        \rput(243,30) {$ \gamma_2$}
          \rput(243,-30) {$ 2 $}
   }        
           Ê}
           
           \rput  (-240,100)   {

 \pscircle ( 180,0) {60} 

 \pscircle*( 240,0) {3}
 
   \pscircle*( 120,0) {3}
   
   \rput(135,0){$w_0$}
      \rput(225,0){$w_1$}
     
          \rput(180,-90){$\Gamma$}
           
    {\blue
      \rput(105,30) {$ y_2=2\alpha$}
        \rput(105,-30) {$x_0=2\beta$}
        \rput(255,30) {$ x_1=\gamma$}
          \rput(255,-30) {$y_1= 2 $}
   }        
           Ê}
        \end{pspicture}
\end{center}
\caption{the group $G$ represented by $\Gamma$  is epi-equivalent to a Baumslag-Solitar group if and only if $\gamma$ is odd and $\gamma\wedge \alpha=1$}
\label{bil}
\end{figure}

\begin{proof} [Proof of Proposition \ref{cer}] 

We may assume that $\Gamma$ has more than one edge (i.e.\ $\ell>1$). 
Among the prime   divisors of $XY$, those which divide both $X$ and $Y$ will be  called bilateral, the others unilateral. 
 
 If $p$ is  a unilateral prime dividing some $x_i$ with $i>0$, a sequence of displacement moves  allows us to move $p$ around the circle from $w_i$ to $w_\ell=w_0$ (through $w_{i+1},\dots,w_{\ell-1}$), dividing $x_i$ by $p$ and multiplying   $x_0$  by $p$ (and leaving all other labels unchanged). 
The group represented by this new  labelled graph (which may fail to be reduced)  is  epi-equivalent to $G$ because another sequence of displacement  moves takes  $p$ back to $w_i$ (through $w_{ 1},\dots,w_{i-1}$). 

We   iterate this construction, obtaining a labelled graph $\Gamma'$  such that no unilateral prime divides $x_i$ for $i>0$, and similarly  no unilateral prime divides $y_j$ for $j<\ell$ (for simplicity we     write $x_i$, $y_j$ rather than $x'_i$, $y'_j$ for the labels carried by $\Gamma'$). The  group $G'$ represented by $\Gamma'$ is epi-equivalent to $G$.

We then  make the graph reduced by collapsing edges with at least one label equal to $\pm1$ (as on Figure \ref{col}).
 We obtain a reduced  labelled graph $\Gamma''$  representing the same group   $G'$. Note that $\Gamma',\Gamma''$ are circles, and  $X,Y$ do not change  during the whole process. 

If there exists $i_0$ as in the proposition, all labels $x_i$
for  $i>  i_0$ and  $y_j$ for $j\le i_0$ in the graph  $\Gamma'$ are equal to $\pm1$: they cannot be divisible by a unilateral prime because of the way $\Gamma'$ was constructed, or by  a bilateral prime because of the  assumption on $i_0$. It follows that all edges of $\Gamma'$, except that between $w_{i_0}$ and $w_{i_0+1}$, get collapsed in $\Gamma''$, so  $G'=BS(X,Y)$  and therefore 
$G$  is epi-equivalent to $ BS(X,Y)$.

Conversely, we assume that $G$ (hence $G'$) maps onto $BS(X,Y)$. 
Suppose for a moment that $\Gamma''$ consists of a single   edge. Let $w_{i_0}$ and $ w_{i_0+1}$ be the endpoints of the unique edge of $\Gamma'$ which does not get collapsed in $\Gamma''$. Since the labels $x_0$ and $y_\ell$ in $\Gamma'$ are not equal to $\pm1$, because $\Gamma$ has more than one edge, collapsibility of the other edges implies  $x_i=\pm1$
for  $i>  i_0$ and  $y_j=\pm1$ for $j\le i_0$ in   $\Gamma'$. This means that the corresponding labels of $\Gamma$ cannot be divisible by a bilateral prime, so $i_0$ satisfies the conditions of the proposition.

We complete the proof of the proposition by showing  that \emph{$G'$ does not map onto $BS(X,Y)$ if $\Gamma''$ has more than one edge}. From now on, all labels $x_i$, $y_j$ are those of $\Gamma''$, and $\ell>1$ is the length of $\Gamma''$. Recall that $\Gamma''$ is reduced, and no unilateral prime divides $x_i$ for $i>0$ or $y_j$ for $j<\ell$. We assume that $f:G'\twoheadrightarrow  BS(X,Y)$ is an epimorphism, and we argue towards a contradiction. 

Let $p$ be a prime dividing $x_1$, and $p'$ a prime dividing $y_1$. They are distinct because $w_0$ meets every plateau of $\Gamma$, and they are bilateral (they each divide both $X$ and $Y$).  Note that $px_0$ divides $X$, and $p'y_\ell$ divides $Y$.

Consider the generator $b_0$ of the vertex group carried by $w_0$.
Write $BS(X,Y)=\langle a,t\mid ta^Xt\m=a^Y\rangle $.
The image of  $b_0$ by $f$  is (conjugate to) a power $a^r$ by Lemma \ref{koc}.  By Lemma \ref{lesp},  applied to the pair  $(f(b_0),f(\tau))$, and Remark \ref{rr}, the number $r$ is coprime with $ X\wedge Y $, hence not divisible by either  $p$ or $p'$.  

The incident edge groups at $w_0$ in $\Gamma''$ are generated by $(b_0)^{x_0}$ and  $(b_0)^{y_\ell}$. We claim that one of them (at least) fixes a unique point in the Bass-Serre tree of  $BS(X,Y)$. 

Otherwise, $rx_0$ and $ry_\ell$ must each be divisible by $X$ or $Y$. Since $rx_0$ cannot be divisible by $X$, because $p  x_0$ divides $X$ but not $rx_0$, it must be divisible by $Y$. Similarly, $X$ divides $ry_\ell$. 
Now $p'  y_\ell$ divides $Y$, hence  $r x_0$, hence $rX$, hence  $r^2 y_\ell$, a contradiction which proves the claim.

It follows that  $f$ factors through the group $\hat G  $ obtained by contracting (in the sense of Definition \ref{hoc}) one of the edges of $\Gamma''$ adjacent to $w_0$, say the edge $w_0w_1$. The labels carried by this edge are $x_0$ and $y_1$, they are not coprime. Indeed, $p'$ divides $y_1$ by definition;   it divides $X$, and it cannot divide $x_i$ for $i>0$ because $w_0$ belongs to every plateau, so it divides $x_0$  (if the contracted edge is $w_0w_\ell$, we use a prime dividing $x_{\ell-1}$ to see that $x_{\ell-1}$ and $y_\ell$ are not coprime). It follows that the numbers $X'',Y''$ associated to $\hat G  $ are smaller than $X$ and $Y$ (they get divided by $x_0\wedge y_1$, see Definition \ref{hoc}), so  $\hat G  $ cannot map onto $BS(X,Y)$ by Lemma \ref{eBS}. Thus $f$ as above cannot exist.
\end{proof}

\begin{cor}
Let $G$ be a 2-generated GBS group   represented by a reduced labelled graph $\Gamma$ homeomorphic to a circle. If there is at most one prime $p$ dividing both $X$ and $Y$, then $G$ is epi-equivalent to $BS(X,Y)$.
\end{cor}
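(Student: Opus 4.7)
The plan is to apply Theorem \ref{deBS} (easy direction) together with Proposition \ref{cer}. Since $\Gamma$ is a circle we have $Q=R=1$, so Theorem \ref{deBS} gives that $G$ is automatically a quotient of $BS(X,Y)$. Epi-equivalence therefore reduces to producing an epimorphism $G \twoheadrightarrow BS(X,Y)$, and Proposition \ref{cer} recasts this as the purely combinatorial task of exhibiting $i_0 \in \{0,\dots,\ell-1\}$ such that every prime dividing both $X$ and $Y$ divides no $x_i$ with $i>i_0$ and no $y_j$ with $j\le i_0$. The hypothesis "at most one bilateral prime" will then cut the argument into two easy cases.

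If no prime divides both $X$ and $Y$, then the constraint in Proposition \ref{cer} is vacuous and any $i_0$ (say $i_0=0$) works. Otherwise, let $p$ be the unique such prime and set
\[
I := \{i : 0 \le i \le \ell-1,\ p \mid x_i\}, \qquad J := \{j : 1 \le j \le \ell,\ p \mid y_j\},
\]
both nonempty since $p$ divides $X=\prod x_i$ and $Y=\prod y_j$. The key step is to show $\max I < \min J$: for any $a \in I$ and $b \in J$, the range conditions already give $1 \le b$ and $a \le \ell - 1$, so Lemma \ref{copr} (which asserts $x_a \wedge y_b = 1$ whenever $1 \le b \le a \le \ell-1$) forces $b > a$. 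Applied to $a = \max I$ and $b = \min J$, this yields $\min J > \max I$. Taking $i_0 := \max I \in \{0,\dots,\ell-1\}$, we see that no $x_i$ with $i>i_0$ is divisible by $p$ (by maximality), and no $y_j$ with $j \le i_0$ is divisible by $p$ (any such $j$ would lie in $J$ and violate $j \ge \min J > i_0$). Proposition \ref{cer} then delivers the desired epimorphism $G \twoheadrightarrow BS(X,Y)$, completing the proof.

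I do not foresee a genuine obstacle; the argument is a direct combinatorial reading of Proposition \ref{cer} through the coprimality furnished by Lemma \ref{copr}. The one item worth checking explicitly is the boundary behaviour: if $p$ happens to divide $x_0$ or $y_\ell$, the pair $(a,b) = (0, b)$ or $(a, \ell)$ lies outside the range in which Lemma \ref{copr} speaks, but in each of those cases the inequality $b > a$ is automatic ($b \ge 1 > 0$, respectively $b = \ell > a$), so these corner cases cause no trouble, and the case $\ell=1$ is absorbed since then $I \subseteq \{0\}$ and $J \subseteq \{1\}$, for which $i_0 = 0$ works.
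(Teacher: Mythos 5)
Your proposal is correct and follows essentially the same route as the paper: both take $i_0$ to be the largest index $i$ with $p\mid x_i$ and then rule out $p\mid y_j$ for $j\le i_0$ using the coprimality forced by Convention \ref{bc} (which you access through Lemma \ref{copr}, the paper directly through the plateau condition). The boundary cases you flag are handled correctly, so nothing further is needed.
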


\begin{proof}  The result is clear if $X$ and $Y$ are coprime (see also Proposition \ref{prem}). Otherwise, let $i_0$ be the largest $i$ such that $p$ divides $x_i$. Since $w_0$ belongs to every plateau, $p$ cannot divide $y_j$ for $j\le i_0$, so $i_0$ is as in the
proposition.
\end{proof}

\subsection{The general case }

We can now prove Theorem \ref{hop} in full generality.
It follows from Proposition \ref{cer}  if $\Gamma$ is a circle. 
 
$\bullet$  We next consider the case when $k=\ell=1$ ($\Gamma$ is a lollipop consisting of only  two edges, as in Figure \ref{ptit}). The statement   in this case is that \emph{$G$ maps onto $BS(QX,QY)$ if and only if one of the numbers $ X\wedge QY $, $ Y\wedge QX $, or $ Q\wedge R$ equals 1.}
 
 Write $$G=\langle a_0,b_0,\tau\mid a_0^Q=b_0^R,\,\tau b_0^X\tau\m =b_0^Y\rangle$$ and $$BS(QX,QY)=\langle a,t\mid ta^{QX}t\m=a^{QY}\rangle.$$ 
 
 Every prime divisor of $R$ divides $XY$ (see Lemma \ref{copr}), so we may write $R\tilde R=X^\alpha Y^\beta$ with $\tilde R\in\Z$ and $\alpha,\beta\ge0$. 

Sending $a_0$ to $a^{Y^{\alpha+\beta}}$, $b_0$ to $t^\alpha a^{\tilde R Q} t^{-\alpha}$, and $\tau$ to $t$ defines a homomorphism $f$ from $G$ to 
$BS(QX,QY)$,
because the relations of $G$ are satisfied: $$f(b_0^R)=t^\alpha a^{\tilde R QR} t^{-\alpha}=t^\alpha a^{  Q X^\alpha Y^\beta} t^{-\alpha}
=a^{  Q  Y^{\alpha+\beta}}=f(a_0^Q)
$$
$$
f(\tau b_0^X\tau\m) = t t^\alpha a^{\tilde R QX} t^{-\alpha} t\m
=t^\alpha  a^{\tilde R QY}t^{-\alpha} =
f(b_0^Y).
$$
  The image of $f$ contains $t$ and $a^{Y^{\alpha+\beta}}$, so it contains $a^{QX^{\alpha+\beta}}$.
 If $ Y\wedge QX =1$, the map $f$ is onto, so   $G$ maps onto $BS(QX,QY)$. By symmetry, this is also true   if 
  $ X\wedge QY =1$. 
  
  Now assume $ Y\wedge QX \ne1$ and   $ X\wedge QY \ne1$. 
    If $f:G\twoheadrightarrow BS(QX,QY)$ is an epimorphism,   $f(a_0)$ is (conjugate to)   $a^{r_0}$ with $r_0$ coprime with $Q$ and $ X\wedge Y $ by Lemma \ref{lesp} and Remark \ref{rr} (being 2-generated, $G$ is generated by $a_0$ and $\tau$). In particular, $f(a_0)$ fixes a single point in the Bass-Serre tree of $BS(QX,QY)$ since $Q\ne\pm1$. We claim that $f(a_0^Q)$ also fixes a unique point: otherwise $a^{r_0Q}$ would be a power of $a^{QX}$ or $a^{QY}$, so $r_0$ would be a multiple of $X$ or $Y$; since $r_0$ is coprime with $Q$ and $ X\wedge Y $, this contradicts $ X\wedge QY \ne1$ or $ Y\wedge QX \ne1$.
 
 It follows that any epimorphism from $G$ to $BS(QX,QY)$ factors through the quotient  group $\hat G  $ obtained by contracting the edge $v_0w_0$. The labels carried by that edge are $Q$ and $R$. If they are coprime, then $ \hat G   =BS(QX,QY)$ and $G$ maps onto $BS(QX,QY)$. If $\delta= Q \wedge R >1$, then $\hat G $ is isomorphic to $BS(QX/\delta,QY/\delta)$,   and $G$ does not map  onto $BS(QX,QY)$ since $\hat G $ does not. This proves the theorem when   $k=\ell=1$.
 
$\bullet$   The next case is when $k=1$ and $\ell $ is arbitrary. First assume $ Y\wedge QX =1$ or $ X\wedge QY =1$.   In particular, $X$ and $Y$ are coprime. As in the proof of Proposition \ref{cer}, we may use displacement moves to construct a quotient  $G'$ of   $G$ represented by a lollipop $\Gamma''$ with $k=\ell=1$, without changing $Q$, $ X$ or $Y$: we
  make all labels $x_i$ for $i>0$ and $y_j$ for $j<\ell$ equal to $\pm 1$, and we collapse edges. 
 In this process $x_0$ becomes $  X$ and $y_\ell$ becomes $  Y$ (the new phenomenon is that 
   $R$ gets multiplied by some number dividing $XY$, so we do not claim that $G'$ is epi-equivalent to $G$). Since $G'$ maps onto $BS(QX,QY)$ by the previous case ($\ell=1$), so does $G$. 
   
 If  $ Y\wedge QX \ne1$ and   $ X\wedge QY \ne1$, we argue as in the case  $\ell=1$. Any epimorphism from $G$ to $BS(QX,QY)$ factors through the group $\hat G $ obtained by contracting the edge $v_0w_0$. This group is represented by the graph $\hat\Gamma $ obtained from $\Gamma$ by deleting the edge $v_0w_0$ and multiplying the labels $x_0$ and $y_\ell$ by $Q/\delta$, with $\delta= Q\wedge R $. By Proposition \ref{cer}, it maps onto $BS(QX,QY)$ if and only if $\delta=1$ and there exists $i_0$ as in the statement of the theorem.
 
$\bullet$  Finally, suppose $k>1$. As usual, any epimorphism  $f:G\twoheadrightarrow BS(QX,QY)$ maps $a_0$ to  (a conjugate of)   $a^{r_0}$ with $r_0$ coprime with $Q$ and $ X\wedge Y $. Now $f(a_0^{q_0})$   fixes a unique point in the Bass-Serre tree of $BS(QX,QY)$: otherwise $a^{r_0q_0}$ would be a power of $a^{QX}$ or $a^{QY}$, so $q_1\cdots q_{k-1}$ would divide $r_0$, a contradiction (note that this argument requires $k\ge2$). Thus $f$ factors through the group $\hat G   $ obtained by contracting the edge $v_0v_1$. This contraction deletes the edge and multiplies $q_1$ by $\frac{q_0}{ q_0\wedge r_1} $, so $G$ maps onto $BS(QX,QY)$ if and only if  $q_0,r_1$ are coprime and $\hat G $ does. The theorem now follows by induction on $k$.

\section{Baumslag-Solitar subgroups}\label{bss}

In this section we use immersions of graphs of groups (represented by   maps between labelled graphs)
to find Baumslag-Solitar subgroups in GBS groups. We show that one can characterize the set of moduli of a GBS group $G$ in terms of Baumslag-Solitar subgroups (Proposition \ref{mod}), and we deduce that  $G$  is residually finite if and only if   it is solvable or virtually $F_n\times\Z$. In the last subsection we 
 determine   which Baumslag-Solitar groups  $BS(r,s)$ may be embedded into a given   $BS(m,n)$.
 
 \subsection{Weakly admissible maps}
 
Let $G$ be a GBS group represented by a labelled graph $\Gamma$.  If $\ov G\inc G$ is finitely generated and not free, it acts on the Bass-Serre tree of $\Gamma$ with infinite cyclic stabilizers (see Lemma 2.7 of \cite{FoJSJ}), and this yields a finite labelled graph $\ov \Gamma$ representing $\ov G$, with a map $\pi:\ov \Gamma\to \Gamma$ sending vertex to vertex and edge to edge (we call such a $\pi$ a \emph{morphism}). 

Conversely, one may use certain morphisms to construct subgroups of a given $G$. 
 In \cite{Le2}  
we defined and used ``admissible'' maps in order to represent finite index subgroups of GBS groups. We now weaken the definition  
in order to represent arbitrary subgroups.

    \begin{dfn}[weakly admissible map] \label{wadm}
    Let $\ov\Gamma$ and $\Gamma$ be     labelled graphs.  A  \emph{weakly admissible map} from $\ov\Gamma$ to $\Gamma$   is a pair $(\pi,m)$ where $\pi:\ov\Gamma\to\Gamma$   is a morphism, and $m$  assigns a positive multiplicity   to each vertex and edge of $\ov\Gamma$ so that  the following condition 
    is satisfied (see Figure \ref{eto}):
    given  an edge   $e$  of $\Gamma$, with origin $v$ and label $\lambda_e$ near $v$, and   $x\in\pi\m(v)$, define  $k_{x,e}$ as the gcd   $m_x\wedge\lambda_e$; then there are \emph{at most} $k_{x,e}$ edges of  $\ov\Gamma$ with origin $x$ mapping to $e$, they each have multiplicity $m_x/k_{x,e}$, and their label near $x$ is $\lambda_e/k_{x,e}$. 
    \end{dfn}
           
    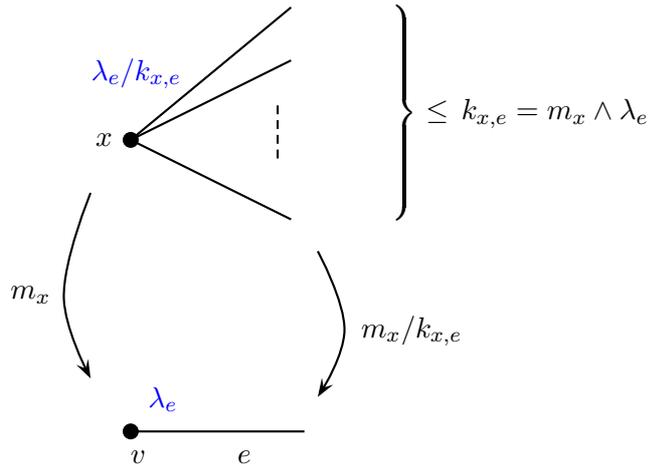
\begin{figure}[h]
\begin{center}
\begin{pspicture}(100,210) 

\rput(0,40){

 \psline  (0,0)(65,0)
     \pscircle*( 0,0) {3}
 \put(0,-12){$v$}
\put(40,-12){$e$}
\rput(12,12){\blue$\lambda_e$}
      \rput (0,110){ 
\rput(2,25){\blue$\lambda_e/k_{x,e}$}        
   \rput(-10,0) {$x$}   
   \rput(152,10){$\le\, k_{x,e}=m_x\wedge\lambda_e$}
   \pscircle*( 0,0) {3}  
       \psline  (0,0)(60,50) 
          \psline  (0,0)(60,30) 
            \psline  (0,0)(60,-30) 
            \psline[linestyle=  dashed, dash=3pt 2pt
            ](55,13)(55,-7)
  \rput*  {90}(100,12){$\underbrace{\phantom{aaaaaaaaaaaaaa}}$    }  
    }
 \rput(-15,80)   
 {
  \pscurve[arrowsize=5pt]  {->}(0,10)  (-10,-30)(0,-60)
    }
   \put(-45,50){$m_x$}   
  \rput(70,68)   
 {
  \pscurve[arrowsize=5pt]  {->}(0,0)  (10,-30)(0,-55)
  \rput(35,-30){$m_x/k_{x,e}$}
    }  
    }
   \end{pspicture}
\end{center}
\setlength\belowcaptionskip{1cm}
\caption{weakly admissible map
} \label{eto}
\end{figure}

    The only difference between admissible (\cite{Le2}, Definition 6.1) and weakly admissible is the insertion of  the words ``at most''. 

We usually denote a weakly admissible map simply by $\pi$, keeping $m$ implicit.
   
   \begin{lem} \label{rwa}
If $\pi:\ov\Gamma\to\Gamma$ is a weakly admissible map between connected labelled graphs, the GBS group $\ov G$  represented by $\ov\Gamma$ embeds into the GBS group $G$ represented by $\Gamma$. 
\end{lem}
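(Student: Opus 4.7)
The plan is to interpret the weakly admissible map as an immersion of graphs of groups in the sense of Bass \cite{Ba}, and mimic his argument that immersions induce injections on fundamental groups. First I would define a candidate homomorphism $\phi:\ov G\to G$ via standard presentations adapted to maximal subtrees $\ov\Gamma_0\inc\ov\Gamma$ and $\Gamma_0\inc\Gamma$, sending each vertex generator $a_{\ov v}$ of $\ov G$ to (a conjugate of) $a_{\pi(\ov v)}^{m_{\ov v}}$ in $G$, and each stable letter of $\ov\Gamma$ to the corresponding stable letter of $\Gamma$ corrected by an elliptic element that accounts for the difference of paths in the two maximal subtrees. The weakly admissible condition is precisely the relation-preservation condition: the label of an edge $\ov e$ near $\ov v$ equals $\lambda_e/k_{\ov v,e}$, so both sides of each defining edge relation become $a_{\pi(\ov v)}^{m_{\ov v}\lambda_e/k_{\ov v,e}}=a_{\pi(\ov v)}^{\mathrm{lcm}(m_{\ov v},\lambda_e)}$, which is the same power of the edge stabilizer of $e$ whether computed from either endpoint.

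For injectivity I would construct a $\phi$-equivariant simplicial map $\Phi:\ov T\to T$ between Bass--Serre trees. At a lift $\tilde v\in\ov T$ of $\ov v$, set $\Phi(\tilde v)$ to be a vertex of $T$ above $\pi(\ov v)$ stabilized by $\phi(\mathrm{Stab}(\tilde v))$, the choices being made starting from a base vertex and propagated along paths to ensure equivariance. At $\tilde v$, the edges of $\ov T$ above a given $\ov e\in\ov\Gamma$ form a single $\mathrm{Stab}(\tilde v)$-orbit of size $\lambda_e/k_{\ov v,e}$, while $\phi(\mathrm{Stab}(\tilde v))=\langle a_v^{m_{\ov v}}\rangle$ acts on the $\lambda_e$ edges of $T$ at $\Phi(\tilde v)$ above $e$ with exactly $k_{\ov v,e}$ orbits of the same size $\lambda_e/k_{\ov v,e}$. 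The ``at most $k_{\ov v,e}$'' clause in the weakly admissible definition lets us send distinct $\ov e$'s above $e$ at $\ov v$ to disjoint target orbits, so $\Phi$ is locally injective at $\tilde v$.

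A locally injective simplicial map between trees is automatically globally injective (geodesics map to geodesics), so $\Phi$ embeds $\ov T$ into $T$ equivariantly. Consequently $\ker\phi$ acts trivially on $\ov T$ and must lie in every vertex stabilizer $\langle a_{\ov v}\rangle$ of $\ov T$; but $\phi(a_{\ov v})=a_{\pi(\ov v)}^{m_{\ov v}}$ has infinite order in $G$ since $m_{\ov v}\geq 1$, so $\phi$ is injective on each such cyclic subgroup and therefore $\ker\phi$ is trivial. The main obstacle I anticipate is the equivariant globalization of the orbit choices that define $\Phi$, especially across stable letters associated to edges of $\ov\Gamma$ not in $\ov\Gamma_0$; this is the classical bookkeeping underlying Bass's immersion theorem, handled by fixing a base lift and transporting data along unique geodesics in $\ov T$.
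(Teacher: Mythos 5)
Your proposal is correct and follows essentially the same route as the paper: both interpret weak admissibility as saying that $\pi$ is an immersion of graphs of groups in the sense of Bass, with the orbit count $k_{x,e}=m_x\wedge\lambda_e$ and the ``at most $k_{x,e}$'' clause doing exactly the work of making the local edge maps injective. The only difference is that the paper simply checks Bass's hypotheses and cites his Proposition 2.7 for injectivity on fundamental groups, whereas you unpack that citation by building the equivariant locally injective (hence injective) map of Bass--Serre trees; your unpacking is sound.
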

  
\begin{proof}
 The quickest way of proving this is to show that $\pi$ defines an immersion of graphs of groups in the sense of \cite{Ba} (it is a covering in the sense of \cite{Ba} if and only if $\pi$ is admissible).  With the notations of    \cite{Ba}, the homomorphisms  $\Phi_a:\cala_a\to \cala'_{\Phi(a)}$ and $\Phi_e:\cala_e\to \cala'_{\Phi(e)}$ are  multiplication by the corresponding multiplicity (all groups are identified with $\Z$).  The commutation relation (2.2) of \cite{Ba} comes from the equality $ 
 \frac{\lambda_e}{k_{x,e}}\cdot m_x=\frac{m_x}{k_{x,e}}\cdot \lambda_e$.  Since $k_{x,e}=m_x\wedge\lambda_e$, multiplication by $m_x$ injects the set of cosets of $\Z$ modulo $\frac{\lambda_e}{k_{x,e}} $ into the set of cosets modulo $\lambda_e$, so $\cala_{a/e}$ embeds into $\cala'_{a'/f}$. The fact that there are at most $k_{x,e}$ lifts ensures that the maps $\displaystyle \Phi_{a/f}:
 \amalg_{e\in\Phi\m_{(a)}(f)}\cala_{a/e}\to \cala'_{a'/f}$ may be made injective.  
 
 Proposition 2.7 of \cite{Ba} asserts that an immersion induces an injection between the fundamental groups of the graphs of groups, so $\ov G$ is a subgroup of $G$. 
\end{proof}
  
  \begin{rem}\label{prac} To prove that   a group $\ov G$ embeds into   $G$, we shall represent $\ov G$ and  $G$ by labelled  graphs $\ov \Gamma$ and $\Gamma$ (not necessarily reduced), define a morphism $\pi:\ov\Gamma\to\Gamma$, and assign multiplicities to vertices 
  of $\ov \Gamma$ (but not to edges).    We then check weak admissibility as follows.  For each oriented edge   $e$ of $\Gamma$, and each $x\in\ov\Gamma$ mapping onto the origin of $e$, we compute  $k=m_x\wedge \lambda_e$. 
  Each oriented edge $\ov e$ of $\ov\Gamma$ with origin $x$ mapping onto $e$ must have label $
    {\lambda_e}/k$,  
  and  there must be at most $k$ such edges (in our constructions, there will be at most 2 edges, so we just need $k>1$); we also have to check that the multiplicity $ 
  {m_x}/k$ of $\ov e$ is the same if we compute it using its terminal point rather than its origin $x$.
    \end{rem}
  
  \subsection{Finding Baumslag-Solitar subgroups}
    
\begin{lem}\label{cont}
If a non-cyclic GBS group $G$ is  a quotient of $BS(m,n)$, with $m$ and $n$ coprime, it contains a subgroup isomorphic to $BS(m,n)$.
\end{lem}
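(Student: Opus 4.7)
The plan is to apply the weakly admissible map machinery. By Proposition \ref{prem}, $G$ is represented by a reduced labelled circle $\Gamma$ of length $\ell$ with $X=m$ and $Y=n$. I will construct a labelled graph $\ov\Gamma$ representing $BS(m,n)$ together with a weakly admissible map $\pi:\ov\Gamma\to\Gamma$ in the sense of Definition \ref{wadm}, and then invoke Lemma \ref{rwa} to produce the embedding $BS(m,n)\hookrightarrow G$.

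The coprimality hypothesis $m\wedge n=1$ gives the crucial structural input: the prime divisors of the $x_j$'s (each dividing $m$) are disjoint from those of the $y_j$'s (each dividing $n$), so $x_j\wedge y_i=1$ for all $i,j$. The strategy is to exploit this by choosing, at each vertex $\ov w$ of $\ov\Gamma$, a multiplicity $m_{\ov w}$ whose prime decomposition selectively absorbs the $y$-factors — forcing $\ov y=y/(m_{\ov w}\wedge y)=1$ there — while remaining coprime to the $x$-factors so that $\ov x=x$, or vice versa. Per Remark \ref{prac}, the edge labels of $\ov\Gamma$ are then forced to be $x_j/k_j$ and $y_j/k'_j$ with $k_j=m_{\ov w}\wedge x_j$ and $k'_j=m_{\ov w}\wedge y_j$; by an appropriate alternating choice, I expect to arrange the pattern so that each tree edge of $\ov\Gamma$ carries at least one label equal to $\pm 1$, and the remaining data specifies a single loop with labels $m,n$ after a sequence of elementary collapses.

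The naive choice $\ov\Gamma=\Gamma$ with the identity morphism gives $\ov G=G$, which is not $BS(m,n)$ in general, and a single loop for $\ov\Gamma$ admits no morphism into the loopless circle $\Gamma$. The remedy is to use an enlarged $\ov\Gamma$ with enough vertices for the multiplicity choices to be flexible — for instance a longer circle obtained by subdividing edges of $\Gamma$, or a lollipop built by attaching a collapsible auxiliary segment at one vertex. The additional vertices supply the freedom needed to shift factors of $m$ and $n$ onto designated edges of $\ov\Gamma$ while keeping the others collapsible, so that the reduced graph of $\ov G$ is precisely the $BS(m,n)$-loop.

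The main technical obstacle will be the explicit design of $\ov\Gamma$ together with the assignment of multiplicities satisfying weak admissibility: the consistency equations $m_{\ov w_j}/k_j=m_{\ov w_{j+1}}/k'_{j+1}$ around each cycle of $\ov\Gamma$ must hold. This is exactly where the coprimality of $m$ and $n$ enters decisively: the cycle equations split into prime-by-prime equalities on the $p$-adic valuations of the $m_{\ov w}$, and because the primes contributing to the $x$-side are disjoint from those contributing to the $y$-side, the two families of equations decouple and can be solved simultaneously. Once weak admissibility has been verified through the checklist of Remark \ref{prac}, Lemma \ref{rwa} delivers the injection $BS(m,n)=\ov G\hookrightarrow G$.
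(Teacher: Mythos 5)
You have correctly identified the paper's framework: reduce to a circle via Proposition \ref{prem}, build a labelled graph $\ov\Gamma$ representing $BS(m,n)$ with a weakly admissible map to $\Gamma$, and conclude by Lemma \ref{rwa}. You have also correctly located where coprimality must enter. But the proposal stops exactly where the proof begins: the explicit graph $\ov\Gamma$, the morphism, and the multiplicities \emph{are} the proof, and you defer all three as ``the main technical obstacle.'' The claim that the cycle consistency equations ``decouple and can be solved simultaneously'' is not established and is not automatic. For a circle $\ov\Gamma$ mapping to the circle $\Gamma$, the ratios $m_x/k_{x,e}$ must match up coherently around the whole cycle, and at the same time the surviving (non-collapsed) labels must multiply out to exactly $m$ and $n$ rather than to proper divisors; with the identity morphism this forces all multiplicities to be $1$ and returns $\ov G=G$, as you note, but nothing in your sketch shows that a longer circle wrapping around $\Gamma$ with positive degree fares any better.

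The paper's resolution is a specific construction you have not found: $\ov\Gamma$ is a circle of $3\ell$ edges in three blocks, where the first and third blocks wrap around $\Gamma$ counterclockwise and the middle block wraps clockwise (net degree one, so that after elementary collapses $\ov\Gamma$ becomes a single loop with labels $\prod x_j=m$ and $\prod y_j=n$), and the multiplicities run through geometric progressions --- from $1$ up to $n^{\phantom{x}}\!\!=y_1\cdots y_\ell$ on the first block, across to $x_0\cdots x_{\ell-1}$ on the middle block, and back down to $1$ on the third --- with coprimality of $\prod x_j$ and $\prod y_j$ used to verify the gcd conditions of Definition \ref{wadm} on the outer blocks. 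The orientation reversal in the middle block is the essential device that lets the multiplicities transition from $y$-powers to $x$-powers while every edge of $\ov\Gamma$ keeps a label $\pm1$; neither ``subdividing $\Gamma$'' nor ``attaching a collapsible segment'' produces this. As written, your argument is a plausible plan with the decisive step missing.
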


This does not hold if $m$ and $n$ are not coprime:
 $BS(3,5)$ is a quotient de $BS(6,10)$ (see Lemma \ref{eBS}) but does not contain it (see Theorem \ref{obstr}). 
  
\begin{proof}
 By Proposition \ref{prem}, $G$ is represented by a reduced labelled graph $\Gamma$ which is a circle  with $X=m$ and $Y=n$. We use the notations of Subsection \ref{2g} (see Figure \ref{disp}), so  the circle has length $\ell$, with  $X=\prod_{j=0}^{\ell-1}x_j$ and $Y=\prod_{j=1}^{\ell }y_j$. We orient $\Gamma$ counterclockwise, so     edges go from $w_i$ to $w_{i+1}$.
 
 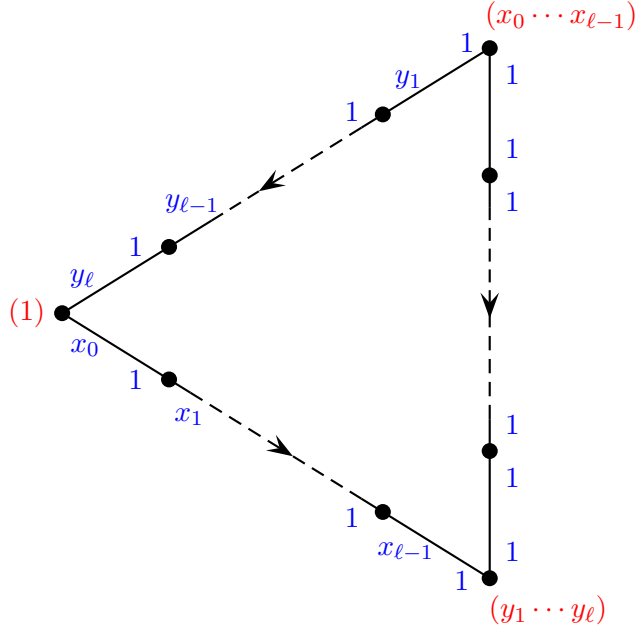
\begin{figure}[h]
\begin{center}
\begin{pspicture}(100,250)

\rput(-40,130){

\psline[linestyle=dashed] (86.4,-54)(112,-70)
  \psline (0,0)(48,-30)
   \psline (160,-100)(112,-70)
  \psline[arrowsize=7pt,linestyle=dashed]  {->}(48,-30)(86.4,-54)

\psline[arrowsize=7pt,linestyle=dashed]  {->}(112,70)(73.6,46)
  \psline (0,0)(56,35)
   \psline (160,100)(112,70)
   \psline[linestyle=dashed] (56,35)(73.6,46)

\psline (160,-100)(160,-40)
\psline (160,100)(160,40)
\psline[arrowsize=7pt,linestyle=dashed]  {->}(160,40)(160,-2)
\psline[linestyle=dashed]   (160,-40)(160,0)

 \pscircle*( 0,0) {3}
  \pscircle*( 160,100) {3}
  \pscircle*( 160,-100) {3}
 \pscircle*( 40,25) {3}
  \pscircle*( 120,75) {3}
   \pscircle*( 40,-25) {3}
  \pscircle*( 120,-75) {3}
 \pscircle*( 160,52) {3}
  \pscircle*( 160,-52) {3}
     
   {\red     
 \rput(-15,0) { $(1)$}    
   \rput(180,-112) { $(y_1\cdots y_\ell)$}  
     \rput(185,112) { $ (x_0\cdots x_{\ell-1})$}  
     }
   {\blue
   \rput(4,13) {$ y_\ell$}
      \rput(5,-13) {$ x_0$}
       \rput(24,25) {$ 1$}
         \rput(24,-25) {$ 1$}
             \rput(45,41) {$ y_{\ell-1}$}
                \rput(44,-39) {$ x_{ 1}$}

 \rput(148,102) {$ 1$}
  
   \rput(126,88) {$ y_1$}
 \rput(105,76) {$ 1$}  
 \rput(146,-101) {$ 1$}
  \rput(125,-90) {$  x_{\ell-1}$}
 \rput(105,-77) {$ 1$}  

 \rput(165,90) {$ 1$}
 \rput(165,62) {$ 1$}
  \rput(165,42) {$ 1$}
   \rput(165,-90) {$ 1$}
   \rput(165,-62) {$ 1$}
    \rput(165,-42) {$ 1$}
   }      

   }
    \end{pspicture}
\end{center}
\caption{a non-reduced graph representing $BS(m,n)=BS(X,Y)$}
\label{trg}
\end{figure}
 
 Let $\ov \Gamma$ be the (non-reduced) labelled graph pictured on Figure \ref{trg}. It is a circle consisting of  $3\ell$ edges,   which we view as 3 blocks of $\ell$ edges each. Going round the circle in the counterclockwise direction starting at the vertex on the left, the first $\ell$ edges have labels $x_0,\dots,x_{\ell-1}$  at their origin and 1 at their terminal point. The next $\ell$ edges have both labels equal to 1. The last $\ell$ edges  have label 1 at their origin and $y_1,\dots,y_\ell$  at their terminal point. Elementary collapses reduce $\ov \Gamma$ to the graph with one vertex and one edge, and   labels $\prod_{j=0}^{\ell-1}x_j$ and $\prod_{j=1}^{\ell }y_j$, so the associated group $\ov G$ is isomorphic to $BS(m,n)$.  
 
 We prove the lemma by describing   a weakly admissible map $\pi:\ov \Gamma  \to  \Gamma$. As indicated on Figure \ref{trg}  by arrows (lifting the orientation of $\Gamma$), the first and third blocks of $\ell$ edges wrap around $\Gamma$ in the counterclockwise direction, the middle block in the clockwise direction. The multiplicities $m_x$ at vertices of $\ov\Gamma$ are as follows (for clarity, we write them within parentheses; three of them are indicated in red on Figure \ref{trg}). In the first block:   $$(1),(y_1), (y_1y_2),\dots,(y_1\cdots y_\ell).$$ In the second block: $$(y_1\cdots y_\ell), (y_1\cdots y_{\ell-1}x_{\ell-1}),\dots,  (y_1\cdots y_{i}x_i \cdots x_{\ell-1}),\dots, (y_1x_1\cdots   x_{\ell-1}),(x_0\cdots x_{\ell-1}).$$  In the third block:
$$ (x_0\cdots x_{\ell-1}),  (x_1\cdots x_{\ell-1}),\dots, (x_{\ell-1}),(1).$$  Weak admissibility is checked as explained in Remark \ref{prac}, using the assumption $\prod x_j\wedge \prod y_j=1$ in the first and third blocks.
\end{proof}

\begin{prop}\label{mod}
 Let $r=\frac mn$ be a non-zero rational number written in lowest terms, with $r\ne\pm1$. Let $G$ be a non-elementary GBS group. Then $r$ belongs to the image of $\Delta_G$ if  and only if $G$ contains a subgroup $H$ isomorphic to $BS(m,n)$.
\end{prop}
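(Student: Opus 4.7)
The plan is to prove the two implications separately. The ``if'' direction is essentially an application of Lemma \ref{koc} together with the definition of $\Delta_G$, while the ``only if'' direction is where the real work happens: one must upgrade a modular relation $ga^Mg^{-1}=a^N$ into a genuine copy of $BS(m,n)$, and this is where Lemma \ref{cont} does the heavy lifting.

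For the ``if'' direction, suppose $H\inc G$ with $H\cong BS(m,n)=\langle a,t\mid ta^mt\m=a^n\rangle$. Since $r=m/n\ne\pm1$ and $\gcd(|m|,|n|)=1$, the pair $(m,n)$ is not $(\pm1,\pm1)$, so $H$ is non-elementary (neither $\Z$, $\Z^2$, nor $K$). I would then apply Lemma \ref{koc} to the inclusion $H\hookrightarrow G$, viewed as a homomorphism of GBS groups with non-elementary image $H$: the generator $a$, being elliptic in $H$, remains elliptic in $G$. The relation $ta^mt\m=a^n$ holds in $G$, so by the definition of the modular homomorphism, $\Delta_G(t)=m/n=r$.

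For the ``only if'' direction, suppose $r\in\mathrm{Im}(\Delta_G)$. Pick any non-trivial elliptic $a\in G$ and some $g\in G$ realizing $\Delta_G(g)=m/n$. By definition this means there are integers $M,N$ with $ga^Mg\m=a^N$ and $M/N=m/n$, so $(M,N)=(qm,qn)$ for some $q\in\Z^*$; we may arrange $q\ge1$ by replacing $g$ with $g\m$ if needed. Setting $a':=a^q$ (which is still non-trivial elliptic), the relation becomes $g(a')^mg\m=(a')^n$, and the subgroup $H:=\langle a',g\rangle$ is a quotient of $BS(m,n)$ under $a\mapsto a'$, $t\mapsto g$. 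Next I would verify that $H$ is a non-cyclic GBS group: it does not act freely on the Bass-Serre tree of $G$ (because $a'$ is elliptic), so being finitely generated it is a GBS group; and if it were cyclic (hence abelian), the relation would force $(a')^{m-n}=1$, hence $m=n$, contradicting $r\ne 1$. With $H$ a non-cyclic GBS quotient of $BS(m,n)$ and $m\wedge n=1$, Lemma \ref{cont} immediately yields a copy of $BS(m,n)$ inside $H$, hence inside $G$.

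The main obstacle is really the second paragraph's reliance on Lemma \ref{cont}, since that lemma itself encodes a nontrivial construction via weakly admissible maps; once it is in hand, the rest is organisational. The only delicate point in the proof above is the passage from $a$ to $a':=a^q$, which is needed to eliminate the common factor $q$ and put oneself in the coprime setting required by Lemma \ref{cont}. Conveniently, the hypothesis $r\ne\pm1$ (hence $m\ne\pm n$) is exactly what forbids $H$ from collapsing to a cyclic or otherwise elementary group, making the application of Lemma \ref{cont} legitimate.
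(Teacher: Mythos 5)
Your proof is correct and follows essentially the same route as the paper: reduce to $q=1$ by replacing $a$ with $a^q$, observe that $H=\langle a^q,g\rangle$ is a non-cyclic GBS subgroup (a quotient of $BS(m,n)$ with $m\wedge n=1$), and invoke Lemma \ref{cont}; the ``if'' direction via Lemma \ref{koc} is also fine, though the paper gets ellipticity of $a$ more directly from the translation-length argument. One cosmetic slip: replacing $g$ by $g\m$ inverts the modulus to $n/m$ rather than changing the sign of $q$ (the correct fix is to replace $a$ by $a\m$), but this is harmless since $a':=a^q$ already works when $q$ is negative.
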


See Proposition \ref{el} for the special case $r=\pm1$.

\begin{proof}   
The ``if'' direction is clear since a relation $ta^mt\m=a^n$ with $m\ne\pm n$ implies that $a$ is elliptic, so $r=\frac mn$ is a modulus. Conversely,  if $\frac mn=\Delta_G(t)$ and $a$ is a non-trivial elliptic element, there is a relation $ta^{qm}t\m=a^{qn}$. Replacing $a$ by  a power, we may assume $q=1$.  The group $H=\langle a,t\rangle$  is not free, so is a non-cyclic GBS group (it acts on the Bass-Serre tree of $G$ with infinite cyclic stabilizers, see  Lemma 2.7 of \cite{FoJSJ}). Being a quotient of $BS(m,n)$, the group $H$ 
contains $BS(m,n)$ by Lemma \ref{cont}.
\end{proof}

When the modulus $r$ is an integer, the proposition provides a solvable subgroup  $BS(1,r)$ (see also Lemma 2.4 of \cite{Le}). If $G$ itself is not solvable, we can construct a more complicated  subgroup.

\begin{lem}\label{mode}
 If a non-solvable GBS group $G$  contains $BS(1,n)$ with $n\ne\pm1$, then it contains $BS(q,qn)$ for some prime $q$.
\end{lem}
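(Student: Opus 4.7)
The plan is to construct $BS(q,qn)$ inside $G$ as a subgroup of the form $\langle c,t\rangle$, where $c\in G$ is elliptic and satisfies $c^q=a$ for some prime $q$.

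First I would represent $G$ by a reduced labelled graph $\Gamma$ and consider the action on its Bass--Serre tree $T$. Since $n\ne\pm1$ and $G$ is not solvable, the subtree $T_{a^n}$ of $T$ fixed by $a^n$ strictly contains the subtree $T_a$ fixed by $a$: otherwise all labels of $\Gamma$ adjacent to $T_a$ would be coprime to $n$, and an analysis of $\Gamma$ along the axis of $t$ would force $G=BS(1,\pm n)$, contradicting the hypothesis. Therefore $tT_a=T_{a^n}\supsetneq T_a$, and one may pick a vertex $v\in T_a$ with $tv\notin T_a$. Writing $G_v=\langle a_v\rangle$, we have $a=a_v^k$ for some integer $k\ge2$: indeed, $k=1$ would mean $a$ generates $G_v$, so $G_{tv}=\langle a^n\rangle$ would be a proper subgroup of $G_v=\langle a\rangle$, and reducedness of $\Gamma$ would then force the loop between $v$ and $tv$ to carry a label $\pm1$, ultimately making $G=BS(1,\pm n)$.

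Next I would pick a prime $q$ dividing $k$ and set $c=a_v^{k/q}\in G_v$. Then $c^q=a$ and $tc^qt\m=a^n=c^{qn}$, so $\langle c,t\rangle$ is a quotient of $BS(q,qn)=\langle x,y\mid yx^qy\m=x^{qn}\rangle$ under the map $x\mapsto c,\,y\mapsto t$. For this quotient to be an isomorphism it suffices to check $tct\m\notin\langle c\rangle$: since $q$ is prime and $(tct\m)^q=c^{qn}$, the only way $tct\m$ could lie in $\langle c\rangle$ is $tct\m=c^n$, and this would force $c^n=a_v^{nk/q}$ to lie in $G_v\cap G_{tv}$. Letting $k'=[G_v:G_v\cap G_{tv}]$, the choice $tv\notin T_a$ is equivalent to $a\notin G_v\cap G_{tv}$, which says $k'\nmid k$; selecting $q$ among the prime divisors of $k$ so that $k'\nmid nk/q$ (for instance $q=p$ with $v_p(k')>v_p(k)$ whenever $p\mid k$) then gives $c^n\notin\langle a_v^{k'}\rangle$ and hence $\langle c,t\rangle\cong BS(q,qn)\subset G$.

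The main difficulty is guaranteeing the existence of a prime $q\mid k$ satisfying $k'\nmid nk/q$. The argument is immediate whenever some prime $p$ with $v_p(k')>v_p(k)$ actually divides $k$; otherwise one must iterate the construction at a different boundary vertex of $T_a$ or combine it with a walk along the axis of $t$, invoking the non-solvability of $G$ to rule out the degenerate configurations where every attempt collapses $\langle c,t\rangle$ back onto $BS(1,n)$.
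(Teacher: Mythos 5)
There is a genuine gap, and it lies at the heart of your strategy: you insist on finding $c$ with $c^q=a$ for the \emph{given} elliptic generator $a$ of the copy of $BS(1,n)$, but such a root need not exist, and when it does exist it may not help. Concretely, take $G=\langle a,b,t\mid a^5=b^7,\ tbt^{-1}=b^2\rangle$, a reduced lollipop (a loop is allowed to carry the label $1$ in a reduced graph, so your claim that $k=1$ would ``ultimately make $G=BS(1,\pm n)$'' is false: extra structure can be amalgamated at the vertex). Here $G$ is non-solvable, $\langle b,t\rangle\cong BS(1,2)$, and $b$ generates its vertex stabilizer $G_{w_0}$; at the unique vertex $v=w_0$ of $T_b$ with $tv\notin T_b$ one has $k=1$, so your construction produces nothing. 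The only proper roots of $b$ in $G$ are the elements $t^{-j}bt^j$, which lie in $\langle b,t\rangle$ and only ever regenerate $BS(1,2)$. The subgroup the lemma actually promises is $\langle a,t\rangle\cong BS(5,10)$, coming from $ta^5t^{-1}=a^{10}$: the witness is not a root of $b$ but a different elliptic element merely \emph{commensurable} with $b$. This is exactly the move the paper makes: it discards the given $a$ and, using minimality and irreducibility of the action (this is where non-solvability enters), produces an elliptic element whose fixed-point set is disjoint from the axis $L$ of $t$; for such an element the relation $ta^qt^{-1}=a^{qn}$ automatically has $q>1$ while $tat^{-1}\ne a^n$, one makes $q$ prime by replacing $a$ with a power, and $\langle a,t\rangle$ is then a non-solvable quotient of $BS(q,qn)$, hence isomorphic to it by Proposition \ref{quis}.

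Two further steps are also not sound as written. Your justification of $T_{a^n}\supsetneq T_a$ (labels ``coprime to $n$'' forcing $G=BS(1,\pm n)$) is not an argument; the correct reason is that otherwise $t$ would preserve $T_a$, so $a$ would fix the axis of $t$ pointwise, and comparing the index of $\langle a\rangle$ in the successive edge stabilizers $G_{t^ke}$ along $L$ contradicts $|n|\ge 2$ (no non-solvability needed). And you concede that the required prime $q\mid k$ with $k'\nmid nk/q$ may not exist, deferring to an unspecified iteration --- that deferred case is precisely where the difficulty sits. Finally, $tct^{-1}\notin\langle c\rangle$ does not make the map $BS(q,qn)\to\langle c,t\rangle$ injective, since $BS(q,qn)$ is in general non-Hopfian; what one needs (and what suffices for the statement) is that a non-solvable GBS quotient of $BS(q,qn)$ with $q$ prime is abstractly isomorphic to $BS(q,qn)$, which is Proposition \ref{quis}.
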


\begin{proof} 
Consider the action of $G$ on the Bass-Serre tree $T$ associated to a minimal labelled graph $\Gamma$. The assumption  implies that $n$ is a modulus, so let $t$ be a hyperbolic element with modulus $n$, and let $L\inc T$ be its axis. Since $G$ is not solvable, its action on $T$ is irreducible. It is minimal, and there is an elliptic element whose fixed point set is not the whole of $T$. By a general fact on groups acting on trees (see below), there exists an elliptic element $a$ with fixed point set disjoint from $L$. It satisfies a relation $ta^qt\m=a^{qn}$ with $q>1$, but $ta t\m\ne  a^{ n}$.   Replacing $a$ by  a power, we may ensure that these properties hold with $q$  prime. The group  $H=\langle a,t\rangle$ is not solvable, so 
by Proposition \ref{quis} 
it is  isomorphic to $BS(q,qn)$. 

(To prove the fact mentioned above, use irreducibility to find $g\in G$ with $gL\cap L=\es$. If the result is wrong, every elliptic element fixes the bridge between $L$ and $gL$. By minimality, the set of points fixed by all elliptic elements is $T$.)
\end{proof}

 \begin{cor}\label{rf}
A GBS group is residually finite if and only if it is solvable or unimodular (virtually $F\times \Z$ with $F$ free).
\end{cor}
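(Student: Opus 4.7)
The plan is to reduce Corollary \ref{rf} to Meskin's classification \cite{Mes}: $BS(m,n)$ is residually finite if and only if $|m|=|n|$, or $m=\pm1$, or $n=\pm1$. Residual finiteness is inherited by subgroups, so the strategy for the nontrivial direction will be to exhibit, inside any non-solvable and non-unimodular GBS group, a Baumslag-Solitar subgroup falling outside Meskin's list. This will incidentally establish the sharper statement advertised in the introduction, that a GBS group is residually finite if and only if all its Baumslag-Solitar subgroups are.

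For the ``if'' direction, suppose $G$ is solvable or unimodular. A solvable GBS group is either elementary (hence $\Z$, $\Z^2$, or $K$) or isomorphic to some $BS(1,n)$; in all these cases Meskin's theorem gives residual finiteness. If $G$ is unimodular, it is virtually $F \times \Z$ with $F$ free, and $F\times\Z$ is residually finite, so $G$ is as well.

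For the ``only if'' direction I would argue by contradiction: assume $G$ is residually finite, non-solvable, and non-unimodular. Then $G$ is in particular non-elementary, and non-unimodularity produces a modulus $r\in\mathrm{im}(\Delta_G)$ with $r\neq\pm1$. Writing $r=m/n$ in lowest terms, Proposition \ref{mod} embeds $BS(m,n)$ into $G$. When both $|m|>1$ and $|n|>1$, coprimality together with $r\neq\pm1$ forces $|m|\neq|n|$, and Meskin then says $BS(m,n)$ is not residually finite, contradicting residual finiteness of $G$.

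The main obstacle is the remaining case $|m|=1$ or $|n|=1$, where $BS(m,n)$ is solvable and therefore residually finite by Meskin, so its presence in $G$ gives no immediate contradiction. This is exactly what Lemma \ref{mode} is designed to handle: up to swapping $m$ and $n$ we have $BS(1,m)\subset G$ with $|m|\neq1$, and since $G$ is non-solvable, Lemma \ref{mode} upgrades this to a subgroup $BS(q,qm)\subset G$ with $q$ prime. Then $|q|>1$, $|qm|>1$, and $|q|\neq|qm|$ (because $|m|>1$), so Meskin gives that $BS(q,qm)$ is not residually finite, yielding the desired contradiction and completing the proof.
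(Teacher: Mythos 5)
Your proposal is correct and follows essentially the same route as the paper: both directions are handled identically, with the nontrivial direction splitting according to whether the chosen modulus $r\neq\pm1$ has both numerator and denominator of absolute value greater than $1$ (apply Proposition \ref{mod} and Meskin's criterion directly) or is an integer up to inversion (apply Proposition \ref{mod} to get $BS(1,n)$ and then upgrade via Lemma \ref{mode}). No gaps; the cosmetic difference of arguing by contradiction rather than directly exhibiting a non-residually-finite subgroup is immaterial.
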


This is proved in \cite{Mes} for $G=BS(m,n)$; in this case $G$ is residually finite if and only if $m=\pm1$, or $n=\pm1$, or $m=\pm n$. It follows from \cite{Wh} that residual finiteness is invariant under quasi-isometry among GBS groups. 

\begin{proof}
A solvable GBS group is isomorphic to some $BS(1,n)$, a unimodular one is virtually $F \times \Z$, these groups are residually finite. 
Conversely, if $G$ is not unimodular and not solvable, it contains a non residually finite Baumslag-Solitar group  by Proposition \ref{mod} if there is a modulus $r$ such that neither $r$ nor $\frac1r$ is an integer, by Lemma \ref{mode} otherwise.
\end{proof}

\subsection{Embeddings of Baumslag-Solitar groups} \label{ebd}

\begin{thm} \label{obstr}
Assume that $BS(r,s)$ is non-elementary, i.e.\  $(r,s)\ne(\pm1,\pm1)$. 
Then  $BS(r,s)$ embeds into $BS(m,n)$ if and only if the following hold:  
\begin{enumerate}
\item $\frac rs$ is a power of $\frac mn$;
\item if $m$ and $n$ are divisible by $p^\alpha$ but not by $p^{\alpha+1}$, with $p$ prime and $\alpha\ge0$, then neither $r$ nor $s$ is divisible by $p^{\alpha+1}$; in particular (for $\alpha=0$), any prime dividing $rs$ divides $mn$;
\item if $m$ or $n$ equals $\pm1$, so does $r$ or $s$.
\end{enumerate}
\end{thm}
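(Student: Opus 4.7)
The plan is to prove necessity and sufficiency separately, and within necessity to treat conditions~\emph{1}--\emph{3} independently. Condition~\emph{3} will be immediate from solvability: if $m$ or $n$ equals $\pm1$, then $BS(m,n)$ is metabelian, so is every subgroup, and $BS(r,s)$ is metabelian only when $r$ or $s$ equals $\pm1$. Condition~\emph{1} will follow from Proposition~\ref{mod}: since $BS(r,s)$ is non-elementary, $r/s$ lies in the image of $\Delta_{BS(m,n)}$, which is the cyclic subgroup of $\Q^*$ generated by $m/n$.

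The heart of necessity is condition~\emph{2}. Take standard generators $b,\tau$ of $BS(r,s)\subset BS(m,n)$; by Lemma~\ref{koc} the element $b$ is elliptic in the Bass-Serre tree $T$ of $BS(m,n)$, so after conjugation I may assume $b=a^k$ for some nonzero integer $k$. Let $v_0$ be the vertex of $T$ fixed by $\langle a\rangle$. The relation $\tau a^{kr}\tau\m=a^{ks}$ forces $a^{kr}$ to fix the entire segment $[v_0,\tau\m v_0]$. Now fix a prime $p$ with $v_p(m)=v_p(n)=\alpha$; every edge of $T$ carries labels $m$ and $n$, both with $v_p$-valuation $\alpha$, and the coordinate changes between adjacent vertex stabilizers along the segment are multiplications by $m/n$ or $n/m$, hence preserve $v_p$. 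Consequently, the condition ``$a^c$ fixes one edge'' becomes the single condition $v_p(c)\ge\alpha$, uniformly along the segment, so $v_p(kr)\ge\alpha$. Moreover $(r,s)$ must be the \emph{minimal} pair for which $\tau a^{kr}\tau\m$ is a power of $a$---otherwise $\langle b,\tau\rangle$ would be a proper quotient $BS(r',s')$ of $BS(r,s)$, contradicting $\langle b,\tau\rangle\cong BS(r,s)$---and this minimal $r$ satisfies $v_p(r)=\max(0,\alpha-v_p(k))\le\alpha$. A symmetric argument using $\tau\m$ yields $v_p(s)\le\alpha$.

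For sufficiency, assume \emph{1}--\emph{3}. Swap $r,s$ if needed so that $r/s=(m/n)^j$ with $j\ge1$ (the case $j=0$ would force $r=\pm s$ and make $BS(r,s)$ elementary). If $m$ or $n$ equals $\pm1$, condition~\emph{3} reduces the task to embedding $BS(1,s')$ into $BS(1,n')$, where conditions~\emph{1}--\emph{2} give $s'=n'{}^j$ for some $j\ge0$ and every prime of $s'$ divides $n'$; the embedding is then $a\mapsto a$, $\tau\mapsto t^j$, whose image $\langle a,t^j\rangle$ is exactly $BS(1,s')$. In the non-solvable case I would construct a weakly admissible map (Definition~\ref{wadm}) $\pi\colon\bar\Gamma\to\Gamma$, where $\Gamma$ is the one-loop graph of $BS(m,n)$ and $\bar\Gamma$ is a polygon representing $BS(r,s)$ whose edges wrap $j$ times around the loop of $\Gamma$. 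The vertex multiplicities of $\bar\Gamma$ are chosen prime by prime so that, for every $p$, the minimal positive power of $b$ whose conjugate by $\tau$ is again a power of $b$ realises exactly the given valuation $v_p(r)$; Lemma~\ref{rwa} then produces the embedding $BS(r,s)\hookrightarrow BS(m,n)$.

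The hardest part will be the sufficiency construction. Condition~\emph{2} rigidly constrains only the primes $p$ with $v_p(m)=v_p(n)$; the remaining primes allow $v_p(r)$ to exceed $\min(v_p(m),v_p(n))$, and accommodating them forces the use of non-cyclically-reduced lifts of $\tau$ (as illustrated by the embedding $BS(4,16)\subset BS(2,4)$ realised via $\tau=t^3at\m$ rather than $t^2$). At the level of graphs this corresponds to inserting ``bridge'' edges in $\bar\Gamma$ that project to portions of the loop of $\Gamma$ traversed in reversed orientations, and the labellings and multiplicities for all primes must be coordinated so that the resulting minimal relation in $\langle b,\tau\rangle$ is exactly $\tau b^r\tau\m=b^s$ and no smaller.
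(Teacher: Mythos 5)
Your necessity argument is essentially the paper's: condition~3 from solvability, condition~1 from the modular homomorphism, and condition~2 by placing $b=a^k$ at a vertex of the Bass-Serre tree of $BS(m,n)$ and tracking the $p$-valuation of the index of the stabilizer of the segment $[v_0,\tau\m v_0]$ inside $\langle a\rangle$. Two steps there are asserted rather than proved. First, ``uniformly along the segment'' is precisely Assertion~4 of Lemma~\ref{wt}: the index of $\langle a_0\rangle\cap\langle a_L\rangle$ in $\langle a_0\rangle$ has $p$-valuation exactly $\alpha$, and this requires the inductive gcd computation of Lemma~\ref{wt2}, not merely the remark that each coordinate change preserves $v_p$. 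Second, the minimal $c$ with $\tau b^c\tau\m\in\langle b\rangle$ is a priori larger than the minimal $c$ with $b^c$ fixing the segment, since $\tau b^c\tau\m$ must land in $\langle a^k\rangle$ and not just in $\langle a\rangle$; the extra factor happens to have trivial $p$-valuation for the primes in question (the modulus of $\tau$ has $v_p=0$ when $v_p(m)=v_p(n)$), but you do not address this. The paper sidesteps both issues by showing that for $a$ a vertex-stabilizer generator and $g$ arbitrary \emph{some} relation $a^N=ga^Mg\m$ holds with $p^{\alpha+1}\nmid N$, transferring this to the subgroup via Lemma~\ref{koc}, and using that any such $N$ is a multiple of $r$ in $BS(r,s)$ --- no minimality is needed.

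The genuine gap is sufficiency, which you yourself flag as ``the hardest part'': you describe the intended shape of a weakly admissible map $\pi:\ov\Gamma\to\Gamma_{m,n}$ but do not construct it. The paper needs an explicit seven-block circle $\ov\Gamma$ wrapping alternately forwards and backwards around the loop, with vertex multiplicities forming geometric progressions in powers of $m$ and $n$, to realize $BS(m^{x+\beta}n^y,m^xn^{y+\beta})$ inside $BS(m,n)$ when $m\wedge n=1$; a separate circle construction when $m$ divides $n$; and an auxiliary lemma (Lemma~\ref{aug}) promoting an embedding of index $q$ to an embedding of $BS(\nu r,\nu s)$ into $BS(\nu m,\nu n)$ when $\nu\wedge q=1$, which forces one to track the indices of all embeddings constructed (they must be divisible only by primes dividing $mn$) before isolating the primes appearing with equal exponents in $m$ and $n$. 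None of these steps appears in your plan, and the ``prime by prime'' choice of multiplicities cannot be made independently: the case division and Lemma~\ref{aug} are exactly what coordinate the primes. One simplification you are missing: once $\ov\Gamma$ collapses to the standard one-loop graph of $BS(r,s)$ and $\pi$ is weakly admissible, Lemma~\ref{rwa} gives the embedding outright, so there is no need to verify that ``the minimal relation in $\langle b,\tau\rangle$ is exactly $\tau b^r\tau\m=b^s$ and no smaller.''
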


See Proposition \ref{el} for the elementary case (when $BS(r,s)$ is equal to $\Z^2$ or $K$).

\begin{example}  \label{mp}
Applied with $p=2$ and $\alpha=1$, the second condition implies  that $BS(12,20)$ does not embed into $BS(6,10)$. It also implies that $BS(2,2)$ only  contains  $BS(1,\pm1)$. On the other hand, $BS(2,3)$ contains   $BS(2^{a+b}3^c,2^{b}3^{a+c})$ for all $a,b,c\ge0$.
\end{example}

\begin{proof}
We first show that the conditions are necessary.

1 follows from Lemma \ref{modeg}: the image of $\Delta_{BS(r,s)}$ in $\Q^*$, generated by $\frac rs$, is contained in the image of $\Delta_{BS(m,n)}$, generated by $\frac mn$.

For 2, we first show that the following property holds in $BS(m,n)$: \emph{if $a$ is elliptic and $g$ is arbitrary, there is a relation $a^N=ga^Mg\m$ with  $N$ not divisible by $p^{\alpha+1}$}. We may assume that $a$ generates the stabilizer of a vertex $v$ in the Bass-Serre tree of $BS(m,n)$. Then $gag\m$ generates the stabilizer of $w=gv$. We apply Lemma \ref{wt} to the segment $vw$. All numbers $q_j$, $r_i$ (for $i>0$) are equal to $m$ or $n$. By the fourth assertion of the lemma, there is a relation $a^N=(gag\m)^M$ with $N$ not divisible by $p^{\alpha+1}$. 

By Lemma \ref{koc}, the property also holds in $BS(r,s)$. It implies that $r$ and $s$ are not divisible by $p^{\alpha+1}$.  

3 is just the fact that a subgroup of a solvable group is solvable.

We shall now construct an embedding  of $BS(r,s)$   into $BS(m,n)$, assuming that the three conditions are satisfied.  
 Writing $BS(x,y)=\langle a_{x,y},t_{x,y}\mid t_{x,y}(a_{x,y})^x t_{x,y}\m=( a_{x,y})^y\rangle$, we say that $q$ is an \emph{index} of an embedding $BS(r,s)\hookrightarrow BS(m,n)$ if $a_{r,s}$ maps to a conjugate of $(a_{m,n})^q$ (a given embedding may have several indices).

\begin{lem} \label{aug}
Let $\nu$ be a non-zero integer. 
\begin{enumerate}
\item $BS(m,n)$ embeds into $BS(\nu m,\nu n)$, with index $\nu$.
\item If there is an embedding   $i:BS(r,s)\hookrightarrow BS(m,n)$  with an  index $q$  
 coprime with $\nu$, then
$BS(\nu r,\nu s)$ embeds into $BS(\nu m,\nu n)$, with an index dividing $q$.
\end{enumerate}
\end{lem}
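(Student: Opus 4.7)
For Part~1, I will define the homomorphism $j: BS(m,n) \to BS(\nu m,\nu n)$ sending $a_{m,n}\mapsto a_{\nu m,\nu n}^{\nu}$ and $t_{m,n}\mapsto t_{\nu m,\nu n}$. The defining relation is preserved:
\[
t_{\nu m,\nu n}(a_{\nu m,\nu n}^{\nu})^{m} t_{\nu m,\nu n}^{-1}\;=\;a_{\nu m,\nu n}^{\nu n}\;=\;(a_{\nu m,\nu n}^{\nu})^{n}.
\]
For injectivity I plan to apply Lemma~\ref{rwa} to the weakly admissible map $\bar\Gamma\to\Gamma$ in which $\bar\Gamma$ is the loop labelled $(m,n)$, $\Gamma$ is the loop labelled $(\nu m,\nu n)$, the morphism is the identity on the underlying graphs, and the single vertex of $\bar\Gamma$ carries multiplicity $\nu$. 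Weak admissibility is immediate since $\nu\wedge\nu m=\nu$ (similarly for $\nu n$): the lifted label required is $\nu m/\nu=m$ and the edge multiplicity $\nu/\nu=1$, both matching $\bar\Gamma$. This realizes $BS(m,n)$ as a subgroup of $BS(\nu m,\nu n)$ with index $\nu$.

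For Part~2, after conjugating $i$ I will assume $i(a_{r,s})=a_{m,n}^{q}$ and set $h:=i(t_{r,s})$, so that $h\,a_{m,n}^{qr}h^{-1}=a_{m,n}^{qs}$ in $BS(m,n)$. The structural observation is the amalgamated product decomposition
\[
BS(\nu r,\nu s)\;=\;\langle a_{\nu r,\nu s}\rangle\ast_{\Z}BS(r,s),
\]
where the amalgamation identifies $a_{\nu r,\nu s}^{\nu}$ with $a_{r,s}$, and $BS(r,s)\hookrightarrow BS(\nu r,\nu s)$ is Part~1 applied to the pair $(r,s)$. The analogous decomposition holds for $BS(\nu m,\nu n)$. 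I will then define $\phi: BS(\nu r,\nu s)\to BS(\nu m,\nu n)$ as the morphism of amalgamated products induced by multiplication by $q$ on the $\Z$-factor ($a_{\nu r,\nu s}\mapsto a_{\nu m,\nu n}^{q}$) and by $j\circ i$ on the $BS(r,s)$-factor. Compatibility at the amalgamated subgroup holds because $a_{\nu r,\nu s}^{\nu}$ and $a_{r,s}$ both map to $a_{\nu m,\nu n}^{\nu q}$ by their respective routes. The index of $\phi$ at $a_{\nu r,\nu s}$ is $q$, hence divides $q$ as required.

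The main obstacle is the injectivity of $\phi$, which I will derive from the normal form theorem for amalgamated products. Both vertex-group morphisms, $\cdot q$ on $\Z$ and $j\circ i$ on $BS(r,s)$, are injective (the latter by Part~1 combined with the hypothesis on $i$). The coprimality $\gcd(q,\nu)=1$ enters precisely here: it guarantees that $\cdot q$ sends representatives of the non-trivial cosets of $\nu\Z$ in $\Z$ to non-trivial cosets (since $q$ is a unit modulo $\nu$), so that syllables on the $\Z$-side do not collapse. On the $BS(r,s)$-side, I will verify $i^{-1}(\langle a_{m,n}\rangle)=\langle a_{r,s}\rangle$: Lemma~\ref{koc} forces any preimage to be elliptic in $BS(r,s)$, and the fact that in a non-elementary Baumslag-Solitar group the normalizer of a vertex stabilizer coincides with the stabilizer itself then places the preimage in $\langle a_{r,s}\rangle$. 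Together these imply that non-trivial reduced amalgamated-product words in $BS(\nu r,\nu s)$ map to non-trivial reduced words in $BS(\nu m,\nu n)$, giving $\phi$ injective.
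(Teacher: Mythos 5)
Your Part 1 is fine: realizing $BS(m,n)$ inside $BS(\nu m,\nu n)$ by a weakly admissible map on the one-loop graphs is a harmless repackaging of the paper's presentation $\langle a,b,t\mid a^{\nu}=b,\ tb^{m}t^{-1}=b^{n}\rangle$, which exhibits $BS(\nu m,\nu n)$ as the amalgam $\langle a\rangle *_{\langle a^{\nu}\rangle=\langle b\rangle}BS(m,n)$. Your Part 2 follows the same amalgam strategy as the paper and is complete \emph{when neither $r$ nor $s$ equals $\pm1$}: there $\langle a_{r,s}\rangle$ is its own centralizer (its fixed-point set in the Bass--Serre tree is a single vertex), so the commutation forced by $i(g)\in\langle a_{m,n}\rangle$ really does give $i^{-1}(\langle a_{m,n}\rangle)=\langle a_{r,s}\rangle$, and your normal-form argument goes through, with coprimality of $q$ and $\nu$ used exactly where the paper uses it. The genuine gap is the solvable case $r=\pm1$ or $s=\pm1$, which the lemma must cover (it is applied in the proof of Theorem \ref{obstr} to pairs that may be solvable). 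There both of your supporting claims fail: in $BS(1,n)$ with $n\ne\pm1$ the vertex group $\langle a\rangle$ is \emph{not} self-normalizing (every elliptic element commutes with $a$), and $i(BS(r,s))\cap\langle a_{m,n}\rangle$ may equal $\langle a_{m,n}^{q'}\rangle$ for a proper divisor $q'$ of $q$, in which case your morphism of amalgams is not injective.

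Here is a concrete failure. Let $i:BS(1,4)\to BS(1,4)$ send $a\mapsto a^{2}$, $t\mapsto t$; this is an embedding with index $q=2$. Take $\nu=3$, coprime with $q$. Set $c=t^{-1}a^{2}t$ in the source factor $BS(1,4)$: then $c^{2}=t^{-1}a^{4}t=a$, so $c\notin\langle a\rangle$, while $i(c)=t^{-1}a^{4}t=a$. In your amalgam $BS(3,12)=\langle A\rangle *_{A^{3}=a}BS(1,4)$ the commutator $[A,c]$ is a reduced word of length $4$ (both letters lie outside the edge group $\langle A^{3}\rangle=\langle a\rangle$), hence is nontrivial; yet $\phi([A,c])=[A^{2},\phi(c)]=[A^{2},A^{3}]=1$, so $\phi$ is not injective. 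The repair is the one the paper makes: define $q'$ (a divisor of $q$) by $i(BS(r,s))\cap\langle a_{m,n}\rangle=\langle a_{m,n}^{q'}\rangle$, replace $a_{r,s}$ by the root $a'_{r,s}=i^{-1}(a_{m,n}^{q'})$, which still satisfies the defining relation of $BS(r,s)$, and run your construction with $q'$ in place of $q$ --- this is precisely why the statement promises only an index \emph{dividing} $q$. A secondary slip: Lemma \ref{koc} gives ``elliptic implies elliptic image'', not the converse you invoke; for an injective $i$ the converse does hold, since $i(g)$ elliptic yields a common power $i(g)^{M}=i(a_{r,s})^{N}$, whence $g^{M}=a_{r,s}^{N}$ and $g$ is elliptic.
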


\begin{proof}
The presentation $BS(\nu m,\nu n)=\langle a,b,t\mid a^ \nu  =b, tb^mt\m=b^n\rangle$ expresses $BS(\nu m,\nu n)$ as the amalgam of $\langle a \rangle$ with    $\langle b,t\rangle\simeq BS(m,n)$ over $\langle a^ \nu \rangle=\langle b\rangle$. This proves 1.

For 2, consider $i:BS(r,s)\hookrightarrow 
  BS(m,n)\inc BS(\nu m,\nu n)$ with 
 $BS(m,n)=\langle b,t\rangle$ as above and $i(a_{r,s})=b^q$.
First assume that none of $r,s$ is equal to $\pm1$. Then $a_{r,s}$ has no proper root in $BS(r,s)$, so $i(BS(r,s))\cap \langle b\rangle$ is generated by $b^q$. Also note that $\langle a^q\rangle\cap \langle b\rangle$ is generated by $b^q=(a^q)^\nu$ because $\nu\wedge q=1$. 
It follows that the subgroup of $BS(\nu m,\nu n)$ generated by $i(BS(r,s))$ and $a^q$ is isomorphic to $BS(\nu r,\nu s)$ (we are using the fact that, given an amalgam $H=A*_CB$ and subgroups $A_1\inc A$, $B_1\inc B$ with $A_1\cap C=B_1\cap C$, the subgroup of $H$ generated by $A_1 $ and $B_1$ is isomorphic to $ A_1*_{A_1\cap B_1}B_1$).

   If $BS(r,s)$ is solvable, define $q' $ (dividing $q$) by $i(BS(r,s))\cap \langle b\rangle=\langle b^{q'}\rangle$, and let $a'_{r,s}=i\m(b^{q'})$. We then have $BS(r,s)=\langle a'_{r,s},t_{r,s}\mid t_{r,s}(a'_{r,s})^r t_{r,s}\m=( a'_{r,s})^s\rangle$, and we may argue as in the previous case.
   \end{proof}

We may assume that $m$ and $n$ are different from $ \pm1$ (the theorem is easy otherwise). We represent $BS(m,n)$ by its standard labelled graph $\Gamma_{m,n}$, oriented so that the edge has initial label $m$ and terminal label $n$.
Using Condition 1, we write $\frac rs=(\frac mn)^\beta$; we may assume $\beta\ge0$. We   write $(r,s)= (\gamma r',\gamma s')$ with $r'\wedge s'=1$, and $(m,n)=(\delta m',\delta n')$ with $m'\wedge n'=1$.

We first assume that no prime appears with the same exponent $\alpha>0$ in $m$ and $n$, so every prime dividing $rs$ divides $m'n'$. We distinguish three subcases. 

$\bullet$ Assume that $m\wedge n=1$. Using the first assertion of Lemma \ref{aug} to increase $\gamma$ if needed, we may assume $r=m^{x+\beta}n^y$ and $s=m^xn^{y+\beta}$ with $x,y\ge1$ and $\beta\ge0$.
We represent $BS(r,s)$ by the non-reduced  labelled graph $\ov \Gamma$ pictured on Figure \ref{gros}, oriented in the counterclockwise direction. Going around the circle starting on the left, one encounters   7 blocks:  

(1)\quad $x+\beta$ edges with labels $m,1$
 
(2)\quad   $x+\beta$ edges with both labels 1
  
(3)\quad $y$ edges with labels $n,1$

(4)\quad $x+y+\beta$ edges with both labels 1

(5)\quad $x$ edges with labels $1,m$

(6)\quad   $y+\beta$ edges with both labels 1

(7)\quad  $y+\beta$ edges with labels $1,n$.
 
  \begin{figure}[h]
\begin{center}
\begin{pspicture}(100,280)
\rput(-40,50){
 \psline (200,160)(200,110)
 
 \psline (200,0)(200,50) 

 \psline[arrowsize=7pt,linestyle=dashed]  {->}(200,50)(200,83)
 \psline[linestyle=dashed] (200,83)(200,110)
  
 \psline (200,160)( 165,160)
 
  \psline[arrowsize=7pt,linestyle=dashed]  {->}(105,160)(145,160)
 \psline[linestyle=dashed] (145,160)(165,160)
  \psline (100,160)( 65,160)
  \psline[arrowsize=7pt,linestyle=dashed]  {->}(0,160)(45,160)
 \psline[linestyle=dashed] (45,160)(65,160)
  \psline (0,0)( 35,0)
   \psline[arrowsize=7pt,linestyle=dashed]  {->}(100,0)(55,0)
 \psline[linestyle=dashed] (55,0)(35,0)
  \psline (100,0)( 135,0)
   \psline[arrowsize=7pt,linestyle=dashed]  {->}(200,0)(155,0)
 \psline[linestyle=dashed] (155,0)(135,0)
 \psline (-80,80)(  -45,45) 
     \psline[arrowsize=7pt,linestyle=dashed]  {->}(-45,45)(-20,20)
 \psline[linestyle=dashed] (-20,20)(0,0)
 \psline (-0,160)(  -35,125) 
     \psline[arrowsize=7pt,linestyle=dashed]  {->}(-35,125)(-60,100)
 \psline[linestyle=dashed] (-60,100)(-80,80)
  
   \pscircle*( 0,0) {3}
  \pscircle*( 200,160) {3}
  \pscircle*( 200,0) {3}
 \pscircle*( 0,160) {3}
  \pscircle*(  100,00) {3}
   \pscircle*( 100,160) {3}
    \pscircle* (-80,80) {3}
    
    {\red
   \rput( 12,15) {$(n^{x +\beta})$}
   \rput( 175,145) {$(n^{x+y+\beta})$}
   \rput( 175,15) {$(m^{x+y+\beta})$}
   \rput( 15,145) {$(m^{y+\beta})$}
   \rput(  100,15) {$(m^{x +\beta})$}
   \rput( 100,145) {$(n^{y+\beta})$}
     \rput (-60,80) {(1)}
    }
   \pscircle*(  70,160) {2}
    \pscircle*(  170,160) {2}
    
     \pscircle*(  30, 0) {2}
    \pscircle*(  130, 0) {2}
    
    \pscircle*(  200,120) {2}
 \pscircle*( 200,40) {2}
  \pscircle*( -55,55) {2}
   \pscircle*( -25,135) {2}

   {\blue
   \rput(8,170) {$ 1$}  
     \rput(78,170) {$ 1$}  
     \rput(63,170) {$ 1$}  
        \rput(90,170) {$ 1$}  
     \rput(-12,2) {$ 1$}  
  \rput(8,-10) {$ 1$}  
     \rput(22,-10) {$ 1$}  
     \rput(37,-10) {$ 1$}  
        \rput(90,-10) {$ 1$}  

\rput(100,0){  
\rput(8,169) {$ m$}  
     \rput(78,169) {$ m$}  
     \rput(63,170) {$ 1$}  
        \rput(90,170) {$ 1$}  
        }
        
   \rput(100,-180){  
\rput(8,169) {$ n$}  
     \rput(22,170) {$ 1$}  
     \rput(37,169) {$ n$}  
        \rput(90,170) {$ 1$}  
        }
        
        \rput(-76,68) {$ m$}
\rput(-65,56) {$ 1$}
\rput(-52,42) {$ m$}

     \rput(-12,158) {$ 1$}  
\rput(-76,92) {$ n$}
\rput(-22,147) {$ n$}
\rput(-35,134) {$ 1$}

\rput(208,150){$ 1$}
\rput(208,130){$ 1$}
\rput(208,110){$ 1$}
\rput(208,50){$ 1$}
\rput(208,30){$ 1$}
\rput(208,10){$ 1$}
  }
  
    \psline[arrowsize=4pt,linewidth=.1pt]  {<->}(0,200)(100,200)
    \psline[arrowsize=4pt,linewidth=.1pt]  {<->}(100,200)(200,200)
  \rput(50,210) {$ y+\beta$}  
    \rput(150,210) {$ x$}  
    
       \psline[arrowsize=4pt,linewidth=.1pt]  {<->}(0,-40)(100,-40)
    \psline[arrowsize=4pt,linewidth=.1pt]  {<->}(100,-40)(200,-40)
  \rput(50,-50) {$ x+\beta$}  
    \rput(150,-50) {$ y$}  
    
           \psline[arrowsize=4pt,linewidth=.1pt]  {<->}(240,0)(240,160)
     \rput(270,80) {$x+ y+\beta$}  

          \psline[arrowsize=4pt,linewidth=.1pt]  {<->}(-30,-30)(-110,50)
     \rput(-82,-2) {$x+ \beta$}  

          \psline[arrowsize=4pt,linewidth=.1pt]  {<->}(-30,190)(-110,110)
     \rput(-82,158) {$  y+\beta$}  
   }
 
   \end{pspicture}
\end{center}
\caption{embedding $BS(m^{x+\beta}n^y,m^xn^{y+\beta})$ into $BS(m,n)$ when $m\wedge n=1$}
\label{gros}
\end{figure}
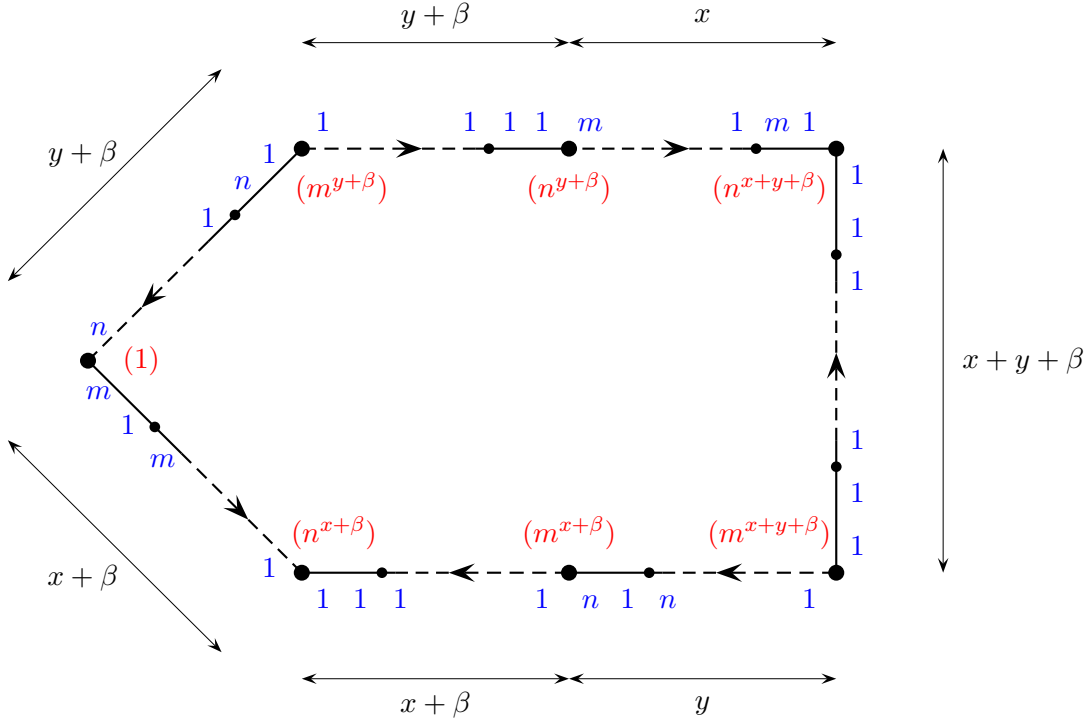

We let $\pi$ be the unique morphism from $\ov \Gamma$ to $\Gamma_{m,n}$ which preserves orientation on blocks $1,4,7$ and reverses it on blocks   $2,3,5,6$ (see  
Figure \ref{gros}, with the  arrows  lifting the orientation of $\Gamma_{m,n}$). 

We define multiplicities of vertices as follows (some of them are indicated within parentheses on Figure \ref{gros}). On each block, they form a geometric progression;  they go from 1 to $n^{x+\beta}$ on block 1, from  $n^{x+\beta}$ to $m^{x+\beta}$ on block 2, from $m^{x+\beta}$ to $m^{x+y+\beta}$ on block 3, 
from $m^{x+y+\beta}$ to $n^{x+y+\beta}$ on block 4,
from $n^{x+y+\beta}$  to $n^{ y+\beta}$ on block 5,
from $n^{ y+\beta}$ to $m^{ y+\beta}$ on block 6,
from $m^{ y+\beta}$ to 1 on block 7. 

One checks that $\pi$ is weakly admissible as explained in Remark \ref{prac}  ($k>1$ is guaranteed by the assumption $m,n\ne\pm1$).

$\bullet$ Assume that $m,n$ are not coprime, but none divides the other. In this case $m'$ and $n'$ are different from $\pm1$, and we can write $BS(r,s)\hookrightarrow BS(m',n')\inc BS(m,n)$ using the previous case and Lemma \ref{aug}.

$\bullet$ If $m $ divides $n$, we   use the fact (proved below) that \emph{$BS(\Delta^x,\Delta^y)$ embeds into $BS(m,\Delta m)$ if $m\ne\pm1$ and $x,y\ge1$} (and $\Delta\ne0$ is arbitrary). We write $n=\Delta m$ and $s=\Delta^\beta r$. Every prime dividing $rs$ divides $\Delta$, so we may assume $r=\Delta^x$ with $x\ge1$. We then have 
$BS(r,s)=BS(\Delta^x,\Delta^{x+\beta})\hookrightarrow BS(m,\Delta m)=BS(m,n)$.

To prove the fact, we represent $BS(\Delta^x,\Delta^y)$ by a labelled graph $\Gamma$ which is a circle consisting of $x+y$ edges  with labels $\Delta$ and 1 placed as on Figure \ref{simp}. We map it to $\Gamma_{m,\Delta m}$ by the unique morphism $\pi$ with the following property:
each edge of $\Gamma$, oriented so that the    label $\Delta$ is near its origin and the label $1$ near its terminal point, maps onto the edge of $\Gamma_{m,\Delta m}$ oriented with $\Delta m$ near its origin and $m$ near its terminal point  (see Figure \ref{simp}).
We assign multiplicity $m$ to all vertices,  multiplicity 1 to  all edges.  One easily checks that $\pi$ is weakly admissible. For future reference, we note that we may also embed $BS(m\Delta^x,m\Delta^y)$   into $BS(m,\Delta m)$, by adding to $\Gamma$ an edge with labels $m$ and 1 (the multiplicity at the terminal vertex of the new graph is $\Delta$).
 
  \begin{figure}[h]
\begin{center}
\begin{pspicture}(100,280)
 
\rput(-90,150){

 \pscurve(0,0)(100,50)(200,0)
 \pscurve(0,0)(100,-80)(200,0)

  \pscircle*( 0,0) {3}
  \pscircle*( 200, 0) {3}
  \pscircle*( 100,-80) {2}
 \pscircle*( 44,-50) {2}
  \pscircle*(  156,-50) {2}
  
  \pscircle*( 59,38) {2}
    \pscircle* (141,38) {2}

   {\blue
   \rput(8,17) {$ \Delta$}  
     \rput(70,52) {$ \Delta$}  
     
     \rput(155,40) {$ \Delta$}  
        \rput(5,-20) {$ \Delta$}  
     \rput(50,-65) {$ \Delta$}  
  \rput(112,-87) {$ \Delta$}  
     \rput(170,-45) {$ \Delta$}  
     \rput(30,-50) {$ 1$}  
        \rput(90,-87) {$ 1$}  
    \rput(42,42) {$ 1$}      
     \rput(150,-65) {$ 1$}      
     \rput(130,52) {$1$}        
       \rput(190,20) {$1$}   
            \rput(192,-20) {$1$}         
  }          
            
 \psline[linestyle=dashed] (-60,0)(0,0)      
  \pscircle*( -60, 0) {2}           
       \rput(-50,10) {{\footnotesize $m$}}        
       \rput(-10,10)  {{\footnotesize $1$}}      
       
                   \psline[arrowsize=4pt,linewidth=.1pt]  {<->}(0,90)(200,90)
                   \rput(100,100) {$ y$}  
            \psline[arrowsize=4pt,linewidth=.1pt]  {<->}( 0,-140)(200,-140)
     \rput(100,-130) {$x$}  

 \rput(275,0){
 
 \psline (0,0)(80,0)
  \psline (-00,-100)(80,-100) 
 
   {\blue
   \rput(10,10) {$ \Delta$}  
     \rput(70,10) {$ 1$}  
     
     \rput(12,-89) {$ \Delta m$}  
        \rput(70,-90) {$ m$}  
     \psline[arrowsize=4pt,linewidth=.1pt]  {->}(40,-30)(40,-70)     
       \pscircle*( 0, 0) {2}  
         \pscircle*( 80, 0) {2}  
           \pscircle*(0, -100) {2}  
             \pscircle*(80, -100) {2}  
        
  }          
 }

   }
   \end{pspicture}
\end{center}
\caption{embedding $BS(\Delta^x,\Delta^y)$ (and $BS(m\Delta^x,m\Delta^y)$)  into $BS(m,\Delta m)$}
\label{simp}
\end{figure}
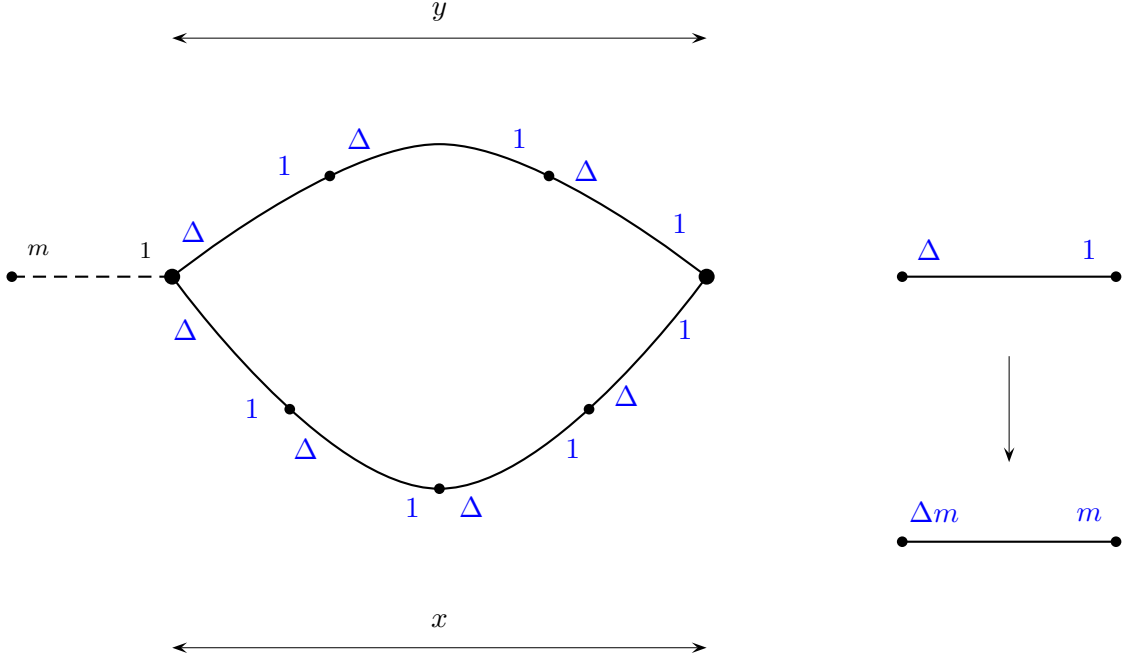

This concludes the case when no prime appears with the same exponent  in $m$ and $n$. 
We observe that, for the weakly admissible maps $\pi$ constructed so far,  multiplicities are divisible only by prime numbers dividing $mn$. With the notations of Lemma \ref{aug}, this implies that we constructed an embedding   $BS(r,s)\hookrightarrow BS(m,n)$  with an index 
$q$ divisible only by primes dividing $mn$.

In the general case, we let $\delta_1$ be the product of the primes appearing with the same exponent in $m$ and $n$, and we
write $(m,n)= (\delta_1m_1,\delta_1n_1)$. Note that 
  $\delta_1\wedge m_1n_1=1$, but $m_1,n_1$ do not have to be coprime. Using Conditions 1 and  2 of the theorem, and Lemma \ref{aug}, we may assume $(r,s)= (\delta_1r_1,\delta_1s_1)$ with $\delta_1\wedge r_1s_1=1$. Every prime dividing $r_1s_1$ divides $m_1n_1$.

  If none of $m_1,n_1$ equals $\pm1$, the previous analysis allows us to embed $BS(r_1,s_1)$ into $BS(m_1,n_1)$ with an index coprime with  $\delta_1$.  
  The second assertion of  Lemma \ref{aug} embeds $BS(r,s)=BS(\delta_1r_1,\delta_1s_1)$ into $BS(\delta_1m_1,\delta_1n_1)=BS(m,n)$.
  
 Now suppose $m_1=1$, so $\delta_1=m\ne\pm1$.  Every prime dividing $r_1s_1$ divides $\Delta=n_1$, so (as above) we may assume   $r_1=\Delta^x$ and $s_1=\Delta^{x+\beta}$ with $x\ge1$. Using a fact proved earlier, we may embed $BS(r,s)=BS(m\Delta^x,m\Delta^{x+\beta})$ into $BS(m,\Delta m)=BS(m,n)$.
\end{proof}

\begin{prop}\label{el}
\begin{itemize}
\item
 $\Z^2$ embeds into every Baumslag-Solitar group, except $BS(1,n)$ for $n\ne\pm1$. The Klein bottle group $K$ embeds into $BS(m,-m)$ and $BS(2m,n)$ for $n\ne\pm1$, and into no other. 
 \item More generally, every GBS group other than $BS(1,n)$ contains $\Z^2$. A GBS group $G$ other than $BS(1,n)$ contains $K$ if and only if $-1$ is a modulus of $G$, or $G$ may be represented by a labelled graph $\Gamma$ with at least one even label.
 \end{itemize}
\end{prop}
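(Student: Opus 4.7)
I would prove the general GBS statement (second bullet), from which the Baumslag-Solitar case follows by specialization: for $G=BS(m,n)$, the image $\mathrm{Im}\,\Delta_G=\langle m/n\rangle$ contains $-1$ exactly when $m=-n$, while $G$ admits an even label exactly when $|m|$ or $|n|$ is even. The obstruction to both embeddings is immediate: $BS(1,n)\cong\Z[1/n]\rtimes_n\Z$ for $|n|\ge2$ has maximal abelian subgroup $\Z[1/n]$ of $\Q$-rank $1$, containing neither $\Z^2$ nor $K\supset\Z^2$.

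For $\Z^2\hookrightarrow G$ when $G\ne BS(1,n)$ with $|n|\ge 2$: the elementary cases $G=\Z^2,K$ are handled directly ($\langle a,t^2\rangle\cong\Z^2$ sits inside $K$). Otherwise $G$ is non-solvable, and any reduced labelled graph $\Gamma$ for $G$ admits an edge $e=vw$ (possibly $v=w$) whose labels $p,q$ both satisfy $|p|,|q|\ge2$: indeed, labels equal to $\pm1$ in a reduced graph occur only on loops, and a bouquet of loops in which every loop carries a $\pm1$-label represents a solvable or elementary group. The subgroup $H=\langle a_v,a_w\rangle$ is the amalgam $\langle a_v,a_w\mid a_v^p=a_w^q\rangle$, with non-trivial cyclic center $\langle a_v^p\rangle$ and non-abelian quotient $\Z/p\ast\Z/q$; the preimage in $H$ of an infinite cyclic subgroup of this quotient is a central extension $1\to\Z\to K_0\to\Z\to1$, necessarily isomorphic to $\Z^2$ since $H^2(\Z;\Z)=0$, so $\Z^2\subset G$.

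For $K\hookrightarrow G$: if $-1\in\mathrm{Im}\,\Delta_G$, I pick $g\in G$ with $\Delta_G(g)=-1$ and a non-trivial elliptic $a$ with $gag^{-1}=a^{-1}$; then $\langle a,g\rangle$ is a non-cyclic GBS quotient of $BS(1,-1)=K$, and Lemma \ref{cont} (with the coprime pair $(r,s)=(1,-1)$) gives $K\subset\langle a,g\rangle\subset G$. If instead some labelled graph $\Gamma$ of $G$ has an even label, after reducing and arranging an edge $e$ at a vertex $v$ with label $2m$ and opposite label $n$ satisfying $|n|\ge2$, I apply a weakly admissible map from the $4$-edge segment representing $K$ with labels $(2,1),(1,1),(1,1),(1,2)$ to $\Gamma$, sending its edges alternately to $e$ and $\tilde e$ with vertex multiplicities $(m,n,2m,n,m)$; the weak-admissibility checks reduce to the gcds $m\wedge 2m=m$, $n\wedge n=n$, $2m\wedge 2m=2m$ together with the lift-count bound $2\le|n|$ at the two interior vertices, and Lemma \ref{rwa} yields $K\hookrightarrow G$.

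Conversely, an embedding $f:K\hookrightarrow G$ with $G$ non-elementary is analyzed via the amalgam presentation $K=\langle u,v\mid u^2=v^2\rangle$: both $f(u)$ and $f(v)$ must be elliptic in $G$'s Bass-Serre tree, since if either were hyperbolic then $f(u)^2=f(v)^2$ would force both to share an axis and hence commute, contradicting non-abelianness of $K$. The common elliptic power $f(u)^2=f(v)^2$ then has a centralizer in $G$ that must strictly exceed $\mathrm{Stab}(\mathrm{Fix}(f(u)))$, lest $\langle f(u),f(v)\rangle$ lie in a single vertex stabilizer and hence be cyclic; interpreted in the Bass-Serre tree, this excess centralization arises either from an edge of even index at the fixed vertex (producing an even label in some labelled graph of $G$) or from an orientation-reversing element that conjugates an elliptic generator to its inverse (producing $-1\in\mathrm{Im}\,\Delta_G$). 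This last combinatorial case-analysis, relating algebraic extension of the centralizer to the labelled-graph geometry, is the main technical obstacle in the proof; the positive constructions of $\Z^2$ and $K$ reduce cleanly to Lemma \ref{cont} and Lemma \ref{rwa} once the correct amalgam or weakly admissible template is identified.
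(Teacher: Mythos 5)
Your positive constructions are mostly sound, but there are two concrete gaps. First, the claim that every reduced labelled graph of a non-solvable, non-elementary GBS group contains an edge both of whose labels have absolute value at least $2$ is false: the bouquet of two loops with label pairs $(1,2)$ and $(1,3)$, i.e.\ $G=\langle a,t_1,t_2\mid t_1at_1\m=a^2,\ t_2at_2\m=a^3\rangle$, is reduced (the $\pm1$ labels sit on loops), is neither solvable nor elementary, and every edge carries the label $1$. Your justification --- ``a bouquet of loops in which every loop carries a $\pm1$-label represents a solvable or elementary group'' --- is exactly what this example refutes. Consequently your amalgam/central-extension argument for $\Z^2$, and likewise your ``arranging an edge with labels $2m$ and $n$, $|n|\ge2$'' step for $K$, do not cover such groups. (The conclusions still hold there --- e.g.\ a hyperbolic element of $\ker\Delta_G$ together with a suitable power of an elliptic element gives $\Z^2$, and $\langle a, t_2t_1\m\rangle$ above is a non-cyclic quotient of $BS(2,3)$, so contains $BS(2,3)\supset K$ by Lemma \ref{cont} --- but a different argument is needed.) Where your edge does exist, the central-extension proof that $\langle a,b\mid a^p=b^q\rangle\supset\Z^2$ and the weakly admissible $4$-edge template for $K$ are correct and match the paper's sketch in spirit.

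Second, the converse for $K$ is both wrong in one case and admittedly unproved in the other. If $f(u)$ is hyperbolic, then $f(v)$ is hyperbolic with the same axis and the same translation, so $c=f(u)f(v)\m$ fixes the axis pointwise; this does \emph{not} force $f(u)$ and $f(v)$ to commute. Rather, $f(u)^2=f(v)^2$ gives $f(v)\,c\,f(v)\m=c\m$ with $c\ne1$ (else $f$ is not injective), so $-1$ is a modulus --- the case is not contradictory, it is one of the two allowed outcomes, and dismissing it loses that conclusion. In the elliptic case you explicitly leave the key step (``excess centralization arises either from an even label or from $-1\in\mathrm{Im}\,\Delta_G$'') as ``the main technical obstacle,'' so nothing is proved there. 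The paper's route is cleaner: write $K=\langle a,t\mid tat\m=a\m\rangle$ (so $a=uv\m$) and split on whether $a$ is elliptic (then $-1$ is immediately a modulus) or hyperbolic (then $t$ reflects the axis of $a$, fixes a vertex $p$ on it, swaps the two incident axis edges while $t^2$ fixes them, and the resulting divisibility $\lambda\mid 2j$, $\lambda\nmid j$ in $\mathrm{Stab}(p)\cong\Z$ forces an even label). You should either adopt that dichotomy or complete your own case analysis along the same lines.
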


\begin{proof}[Sketch of proof]
 In most cases (in particular  if no label of $\Gamma$ equals $\pm1$), one embeds $\Z^2$ or $K$ by observing that $\langle a,b\mid a^m=b^n\rangle$ contains $\Z^2$ if $m$ and $n$ are $\ne\pm1$, and contains $K$ if in addition $m$ or $n$ is even. We leave the remaining cases to the reader. Conversely, if $G$ contains $K= \langle a,t\mid tat\m=a\m\rangle$, then $-1$ is a modulus if $a$ is elliptic. If $a$ is hyperbolic, $t$ acts as a reflection on its axis, and this forces some label to be even.
\end{proof}

\section{Subgroups of $BS(n,n)$}

We denote by $Z(G)$ the center of a GBS group $G$. If $G\ne\Z^2$, it is trivial or infinite cyclic.

 \begin{thm} \label{subnn}
Given a non-cyclic GBS group $G$, and $n\ge2$, the following are equivalent:
\begin{enumerate}
\item $G$ embeds into $BS(n,n)$;
\item $G/Z(G)$ is a free product of cyclic groups, with the order of each finite factor 
 dividing $n$;
\item $G$ may be represented by a reduced labelled graph $\Gamma$ such that all labels near any given vertex $v$ are equal, and they divide $n$.
\end{enumerate}
\end{thm}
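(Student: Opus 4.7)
My plan is to prove the three-way equivalence by exploiting the central extension $1 \to Z \to BS(n,n) \to \Z/n\Z * \Z \to 1$ with $Z = \langle a^n\rangle$: condition (2) says $G$ fits in an analogous extension with the same quotient type, and (3) realises this structurally in the GBS graph. The Kurosh subgroup theorem applied to $\Z/n\Z * \Z$ is the main tool.

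For (3) $\Rightarrow$ (2), the common label $d_v$ at each vertex ensures that $z := a_v^{d_v}$ is a globally defined central element of $G$; quotienting by $\langle z\rangle$ collapses every vertex group to $\Z/d_v\Z$ and every edge group to the trivial group, yielding $G/\langle z\rangle = *_v \Z/d_v\Z * F_{\beta(\Gamma)}$, which has the required form. For non-abelian $G$ this quotient is non-cyclic hence centreless, so $Z(G) = \langle z\rangle$; the case $G = \Z^2$, represented by the loop with labels $(1,1)$, is immediate. For (1) $\Rightarrow$ (2), set $K := G \cap Z$: if $K = 1$ then $G$ injects into $\Z/n\Z * \Z$, and since $G$ is torsion-free Kurosh gives $G$ free, but non-cyclic free groups are not GBS (any GBS splitting of a free group would, via propagation of common cyclic edge stabilisers across adjacent vertex stabilisers, force a non-trivial normal cyclic subgroup, incompatible with non-abelian freeness), a contradiction. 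So $K \cong \Z$, Kurosh gives $G/K$ as a free product of cyclic groups with finite factors dividing $n$, and centrelessness of the non-cyclic case forces $Z(G) = K$.

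For (2) $\Rightarrow$ (3), I take a tree-of-cyclics graph of groups for $\bar G := G/Z(G) = *_v C_v$ and lift to a GBS structure on $G$: vertex groups become $\Z$ (preimages of finite $C_v$, with $a_v^{d_v} = z$) or $\Z^2$ (preimages of $\Z$-factors), and edge groups all become $\langle z\rangle = \Z$; each $\Z^2$ vertex is resolved as a loop with labels $(1,1)$ (since $\Z^2 = BS(1,1)$). All labels at any vertex are then equal (to $d_v$ or to $1$) and divide $n$; reducing (collapsing non-loop label-$1$ edges) preserves (3). For (3) $\Rightarrow$ (1), I construct $f : G \to BS(n,n)$ by $f(a_v) := h_v a^{n/d_v} h_v\m$ and $f(\tau_e) := h_e$ for suitable lifts $h_v, h_e \in BS(n,n)$; since $f(a_v^{d_v}) = h_v a^n h_v\m = a^n$ is central, every edge and stable-letter relation of $G$ evaluates to $a^n = a^n$, so $f$ is a well-defined homomorphism. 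The induced map $\bar f : \bar G \to \Z/n\Z * \Z$ is an embedding of $*_v \Z/d_v\Z * F_\beta$ by the existence half of Kurosh, realised explicitly by choosing $\bar h_v = \bar t^{N_v}$ with distinct $N_v$ and the $\bar h_e$'s conjugate to distinct powers of $\bar t$, verified via a ping-pong argument on the Bass-Serre tree of $\Z/n\Z * \Z$. Then $\ker f \subseteq \langle z\rangle$, and since $f(z) = a^n$ has infinite order, $f$ is injective.

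The principal obstacle is the last step: one must choose the lifts $h_v, h_e$ so that the images of the generators of $\bar G$ genuinely freely generate the expected free product inside $\Z/n\Z * \Z$, with no accidental relations collapsing $\bar f$. This is handled by a ping-pong/Bass--Serre argument on the tree of $\Z/n\Z * \Z$, choosing the parameters $N_v$ and the positions of the axes of the $\bar h_e$ to lie in sufficiently generic position.
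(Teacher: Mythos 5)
Your argument is correct in outline, and for the key implication $(3)\Rightarrow(1)$ it takes a genuinely different route from the paper. The paper constructs the embedding geometrically: it subdivides $\Gamma$ by elementary expansions so that all non-terminal vertices are trivalent with labels $1$, orients the result, maps it to the standard loop $\Gamma_{n,n}$, and assigns multiplicities making the morphism \emph{weakly admissible}, i.e.\ an immersion of graphs of groups in the sense of Bass (Lemma \ref{rwa}); injectivity is then automatic. You instead write down the homomorphism $f$ directly on a standard generating set (which works because every edge relation of $\Gamma$ lands on the central element $a^n$), and reduce injectivity of $f$ to injectivity of the induced map $G/\langle z\rangle\to BS(n,n)/\langle a^n\rangle\simeq \Z/n\Z*\Z$, which you verify by Kurosh plus ping--pong on the Bass--Serre tree of $\Z/n\Z*\Z$. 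The paper's method buys a uniform injectivity criterion with no case-by-case ping--pong, and generalizes to other subgroup constructions in Section \ref{bss}; yours is more elementary and makes the central extension $1\to\Z\to BS(n,n)\to\Z/n\Z*\Z\to1$ the organizing principle throughout, so that $(1)\Rightarrow(2)$ (Kurosh applied to $G/(G\cap Z)$, versus the paper's direct analysis of the $G$-action on $T_{min}$ with central edge stabilizers) and $(3)\Rightarrow(1)$ become two halves of the same picture. Two points you should still write out carefully: the ping--pong choice of the lifts $\bar h_v,\bar h_\varepsilon$ (you correctly flag this as the crux; since edge stabilizers in the tree of $\Z/n\Z*\Z$ are trivial, subgroups fixing distinct vertices together with hyperbolic elements in general position do generate the expected free product, so the step is routine but not free), and the exclusion of the degenerate cases where $G/(G\cap Z)$ is cyclic ($G=\Z^2$, where $Z(G)\ne G\cap Z$) -- the paper sidesteps these by assuming $G$ non-elementary at the outset. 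Your parenthetical argument that a non-cyclic free group is not a GBS group (all elliptics are pairwise commensurable, hence lie in a common maximal cyclic subgroup, whose conjugation-invariant span would be a non-trivial cyclic normal subgroup) does work, though the paper gets the same conclusion more quickly from one-endedness of non-cyclic GBS groups.
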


This does not hold for $n=1$, because of $K$ and $F\times\Z$ (with $F$ free).

If $\Gamma$ is as in 3, so is any reduced labelled graph $\Gamma'$ representing $G$ (up to admissible sign changes), because $\Gamma$ and $\Gamma'$ differ by slide moves (\cite{FoGBS}, Theorem 7.4).

There is a similar characterization of groups with embed into $BS(n,\pm n)$. One must replace $Z(G)$ by an infinite cyclic normal subgroup, and allow labels near a given vertex to be opposite. In particular, \emph{$G$ embeds into $BS(n,n)$ or $BS(n,-n)$ if and only if it may be represented by a reduced labelled graph $\Gamma$ such that all labels near any given vertex $v$ are equal up to sign, and they divide $n$.}

\begin{proof} We may assume that $G$ is non-elementary. We let $\Gamma_{n,n}$ be the standard splitting of   $BS(n,n)$ as an HNN extension, and we orient its edge. We denote by $T$ the associated Bass-Serre tree. The stabilizer of any edge is equal to the center of $BS(n,n)$.

If $G\inc BS(n,n)$, we consider its action  on the minimal $G$-invariant subtree   $T_{min}\inc T$.  
 Being a non-cyclic GBS group, $G$ is one-ended (\cite{FoJSJ}, Lemma 2.6), so edge stabilizers are non-trivial.  If $g\in G$ fixes an edge, it is central in $BS(n,n)$ hence in $G$. In particular, $Z(G)$ is infinite cyclic. The action of $G$ on $T_{min}$ being irreducible because $G$ is non-elementary, a standard argument implies that $Z(G)$ acts as the identity on $T_{min}$ (see \cite{Le}, Proposition 2.5).

We have shown that every edge stabilizer for the action of $G$ on $T_{min}$ is infinite cyclic and equal to $Z(G)$.  For the induced action of $G/Z(G)$, edge stabilizers are trivial and vertex stabilizers are finite cyclic groups. Their order divides $n$ because they may be expressed as $G\cap\langle a\rangle/G\cap\langle a^n\rangle$ for some $a\in BS(n,n)$. We have  proved that 1 implies 2.

If $G$ is as in 2, we let $G/Z(G)$ act on a reduced tree $S$ with trivial edge stabilizers, and vertex stabilizers cyclic of order dividing $n$. Lift the action to $G$. Edge stabilizers are infinite cyclic (they are non-trivial by one-endedness of $G$). Vertex stabilizers are finite extensions of $\Z$, so are infinite cyclic because they are torsion-free. 

Near each vertex $v$, all edge stabilizers are equal, and their index in the   stabilizer of $v$  divides $n$. This means that in the labelled graph $\Gamma=S/G$ all labels near a given vertex are equal in absolute value. They are actually equal if we fix a generator $z$ of $Z(G)$ and require that all generators of vertex and edge groups of $\Gamma$ be positive roots of $z$. We have proved that 2 implies 3.

Finally, let $G$ be as in 3. Using elementary expansions (the converse of elementary collapses), we represent $G$ by a (non-reduced) labelled graph $\Gamma'$ such that all labels near a non-terminal vertex are equal to 1, and these vertices are trivalent (see an example on Figure \ref{triv}). It is easy to orient edges of $\Gamma'$ so that each trivalent vertex has at least one incoming and one outgoing edge. Let then $\pi$ be the only orientation-preserving graph morphism from $\Gamma'$ to $\Gamma_{n,n}$.
We now define multiplicities so that $\pi$ is weakly admissible in the sense of Definition \ref{wadm}.

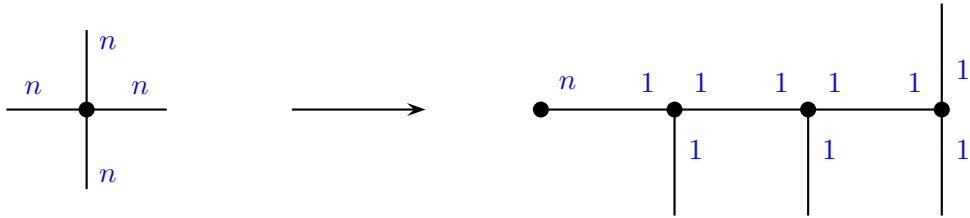
\begin{figure}[h]
\begin{center}
\begin{pspicture}(100,105) 
\rput   (0,50){

\rput(70,0)
{
   \psline  (0,0)(150,0)   
    \psline  (50,0)(50,-40)
     \psline  (100,0)(100,-40)
  \psline(150,40)(150,-40)
 
 \pscircle*( 0,0) {3}
   \pscircle*( 50,0) {3} 
   \pscircle*( 100,0) {3} 
\pscircle*( 150,0) {3} 
   
      {\blue
   
   \rput(58,-15) {$1$}
     \rput(108,-15) {$1$}
      \rput(158,-15) {$1$}  
   \rput(158,15) {$1$}  
    \rput(10,10) {$n$}
      \rput(60,10) {$ 1$}
             \rput(40,10) {$ 1$}
        \rput(90,10) {$ 1$}
             \rput(110,10) {$ 1$}
     \rput(140,10) {$ 1$}
   }      

 }
\rput(-100,0)
{

 \psline  (0,0)(0,30)
 \psline  (0,0)(-30,0)
 \psline  (0,0)(0,-30)
 \psline  (0,0)(30,0)

 \pscircle*( 0,0) {3}
 
    {\blue

    \rput(8,25) {$ n$}
     \rput(20,8) {$ n$}
 
   \rput(-20,8) {$ n$}
     \rput(8,-25) {$n$}
        
   }      

}   
   \rput (-25,0){ 
\psline[arrowsize=5pt]  {->}(0,0)(50,0)
} 
 }
\end{pspicture}
\end{center}
\caption{making non-terminal vertices trivalent with all labels 1} \label{triv}
\end{figure}

All edges have multiplicity 1. A terminal vertex of $\Gamma'$ with label $d$ has multiplicity $n/d$. Trivalent vertices of $\Gamma'$ have multiplicity $m$. The number of incoming (resp.\  outgoing) edges is at most 2, which is not bigger than $k_{x,e} $ since $k_{x,e} =n\wedge n=n$ and we assume $n\ge 2$.

\end{proof}

\begin{cor}\label{mmm}
A finitely generated group $G$ embeds into some $BS(n,n)$ if and only if $G$ is torsion-free, $Z(G)$ is cyclic, and $G/Z(G)$ is a free product of cyclic groups. 
\end{cor}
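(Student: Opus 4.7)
The plan is to reduce Corollary \ref{mmm} to Theorem \ref{subnn} by isolating the free-group and cyclic cases and handling the remaining non-cyclic GBS case via that theorem.

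For the forward direction, suppose $G\hookrightarrow BS(n,n)$. Then $G$ is torsion-free (as $BS(n,n)$ is), and by the general fact recalled in Section \ref{prel} that any finitely generated subgroup of a GBS group is either free or itself a GBS group, either $G$ is free or $G$ is a non-cyclic GBS group. If $G$ is free, then $Z(G)$ equals $G$ or is trivial (cyclic in both cases) and $G/Z(G)$ is trivial or a free product of infinite cyclic groups. If $G$ is a non-cyclic GBS group, Theorem \ref{subnn}, direction (1)$\Rightarrow$(2), gives that $G/Z(G)$ is a free product of cyclic groups; the argument used in that direction also shows that $Z(G)$ is the non-trivial intersection of $G$ with the infinite cyclic center of $BS(n,n)$, hence infinite cyclic.

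For the backward direction, assume $G$ is torsion-free, $Z(G)$ cyclic, and $Q:=G/Z(G)$ a free product of cyclic groups. Finite generation of $G$ forces $Q$ to have only finitely many non-trivial factors. If $Z(G)=1$, then $G=Q$ is a torsion-free finitely generated free product of cyclic groups, hence a free group, so $G\hookrightarrow F_2\le BS(n,n)$. If $Z(G)\cong\Z$ and $Q$ is trivial, then $G=\Z$ embeds trivially. Otherwise $Z(G)\cong\Z$ and $Q$ is a non-trivial finite free product of cyclic groups; I will show that $G$ then carries a GBS structure, at which point Theorem \ref{subnn}, direction (2)$\Rightarrow$(1), applied with $n$ equal to the $\mathrm{lcm}$ of the orders of the finite factors of $Q$, yields $G\hookrightarrow BS(n,n)$.

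To exhibit the GBS structure, I let $Q$ act on its standard Bass-Serre tree $S$ (trivial edge stabilizers; a trivial central vertex with pendant vertices and loops carrying the cyclic factors of $Q$) and pull the action back to $G$ via $G\twoheadrightarrow Q$. Edge stabilizers in $G$ all equal $Z(G)\cong\Z$, and each vertex stabilizer $G_v$ is an abelian central extension of some cyclic factor $C_d$ of $Q$ by $\Z$. Torsion-freeness of $G$ forces $G_v\cong\Z$ when $d$ is finite (of index $d$ over $Z(G)$) and $G_v\cong\Z^2$ when $C_d\cong\Z$ (using $H^2(\Z;\Z)=0$). At each such $\Z^2$-vertex I refine by the HNN splitting $\Z^2=\langle z\rangle*_{\langle z\rangle}$ with identity inclusions on both sides, which yields a loop carrying labels $(1,1)$ and a refined vertex of stabilizer $\langle z\rangle=Z(G)$. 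After this refinement every vertex and edge stabilizer is infinite cyclic, so $G$ is a GBS group, and Theorem \ref{subnn} applies.

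The main obstacle is the refinement step: verifying that the pullback of $S$ really is a graph-of-groups decomposition of $G$ with infinite cyclic edge stabilizers, that torsion-freeness forces the vertex groups to be exactly $\Z$ or $\Z^2$, and that the HNN refinements at infinite cyclic factors of $Q$ assemble with the rest of the decomposition into a genuine GBS splitting whose existence is all that is needed to invoke Theorem \ref{subnn}(2)$\Rightarrow$(1).
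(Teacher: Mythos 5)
Your proof is correct and follows essentially the same route as the paper: both directions reduce to Theorem \ref{subnn}, with the free and cyclic cases split off, and the backward direction is obtained by lifting an action of $G/Z(G)$ on a tree with trivial edge stabilizers to a GBS structure on $G$. The only difference is that the paper (reusing its proof of $2\Rightarrow3$ of Theorem \ref{subnn}) chooses the tree so that the infinite cyclic free factors of $G/Z(G)$ act freely, making every lifted vertex stabilizer a torsion-free finite extension of $\Z$ and hence $\Z$ outright, which renders your $\Z^2$-refinement step unnecessary.
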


Theorem \ref{mmmm} is a restatement of this corollary.

\begin{proof}
Suppose that $G$ is as in the corollary. If $ Z(G)$  is trivial, then $G$ is free and embeds into any $BS(n,n)$ with $n\ge2$. If  $ Z(G)$ is infinite cyclic, the proof of $2  \Ra 3$ above shows that $G$ is a GBS group, so it embeds into some $BS(n,n)$ by the theorem. Conversely, any finitely generated subgroup of a GBS group is free or a GBS group, and Theorem \ref{subnn} applies.
\end{proof}

  \begin{flushleft}
\bigskip
Gilbert Levitt\\
Laboratoire de Math\'ematiques Nicolas Oresme\\
Universit\'e de Caen et CNRS (UMR 6139)\\
BP 5186\\
F-14032 Caen Cedex\\
France\\
\emph{e-mail:} \texttt{levitt@unicaen.fr}\\

\end{flushleft}

 \end{document}